\title[Homeomorphism and diffeomorphism groups]
{Homeomorphism and diffeomorphism groups of non-compact manifolds
 with the Whitney topology}
\author[T. Banakh]{Taras Banakh}
 \address[T. Banakh]{ Department of Mathematics,
 Ivan Franko National University of Lviv, Lviv, 79000, Ukraine, and \newline
 Instytut Matematyki,
 Uniwersytet Humanistyczno-Przyrodniczy im.\ Jana Kochanowskiego,
 Kielce, Poland}
\email{t.o.banakh@gmail.com}
\author[K. Mine]{Kotaro Mine}
 \address[K. Mine]{Institute of Mathematics,
 University of Tsukuba, Tsukuba, 305-8571, Japan}
 \email{pen@math.tsukuba.ac.jp}
\author[K. Sakai]{Katsuro Sakai$^{\ast}$}
 \address[K. Sakai]{Institute of Mathematics,
 University of Tsukuba, Tsukuba, 305-8571, Japan}
 \email{sakaiktr@sakura.cc.tsukuba.ac.jp}
\author[T. Yagasaki]{Tatsuhiko Yagasaki$^{\ast\ast}$}
\address[T. Yagasaki]{Division of Mathematics,
 Kyoto Institute of Technology, Kyoto, 606-8585, Japan}
\email{yagasaki@kit.ac.jp}
\title[Homeomorphism and diffeomorphism groups]
{Homeomorphism and diffeomorphism groups of non-compact manifolds
 with the Whitney topology}
\subjclass[2000]
{22A05, 46A13, 46T05, 46T10, 54H11,
 57N05, 57N17, 57N20, 57S05, 58D05, 58D15}
\keywords{Homeomorphism group, diffeomorphism group,
 transformation group, the Whitney topology, $\sigma$-compact manifold,
 non-compact surface, ($l_2 \times \IR^\infty$)-manifold, Fr{\'e}chet space,
 LF-space, topological group, box product, small box product.}
\thanks{This work is supported by 
Grant-in-Aid for Scientific Research (No.17540061$^\ast$, No.19540078$^{\ast\ast}$).}
\newcommand{\IR}{\mathbb R}
\newcommand{\IN}{\mathbb N}
\newcommand{\w}{\omega}
\newcommand{\U}{\mathcal U}
\newcommand{\V}{\mathcal V}
\newcommand{\W}{\mathcal W}
\newcommand{\M}{\mathcal M}
\newcommand{\N}{\mathcal N}
\newcommand{\K}{\mathcal K}
\newcommand{\F}{\mathcal F}
\newcommand{\LL}{\mathcal L}
\newcommand{\supp}{\operatorname{supp}}
\newcommand{\cbox}{\boxdot}
\newcommand{\cov}{\mathrm{cov}}
\newcommand{\St}{\mathcal St}
\newcommand{\WO}{\mathsf{WO}}
\newcommand{\HH}{\mathcal H}
\newcommand{\DD}{\mathcal D}
\newcommand{\EE}{\mathcal E}
\newcommand{\im}{\operatorname{im}}
\newcommand{\id}{\mathrm{id}}
\newcommand{\cl}{\mathrm{cl}}
\newcommand{\bd}{\mathrm{bd}}
\newcommand{\tint}{\mathrm{int}}
\newcommand{\mint}{\mathrm{Int}}
\newtheorem{theo}{Theorem}
\newtheorem{prop}[theo]{Proposition}
\newtheorem*{conj}{Conjecture}
\newtheorem{theorem}{Theorem}[section]
\newtheorem{proposition}[theorem]{Proposition}
\newtheorem{lemma}[theorem]{Lemma}
\theoremstyle{definition}
\newtheorem{definition}[theorem]{Definition}
\newtheorem{remark}[theorem]{Remark}
\newtheorem{assumption}[theorem]{Assumption}
\newtheorem{convention}[theorem]{Convention}
\newcommand{\atau}[3]{\tau^{\text{\tiny$G$}}_{\text{\tiny ${#1}$}}\text{\footnotesize $({#2}, {#3})$}} 
\newcommand{\ahtau}[3]{\widehat{\tau}^{\text{\tiny$G$}}_{\text{\tiny ${#1}$}}\text{\footnotesize $({#2}, {#3})$}}
\begin{document}

\baselineskip 5.6mm

\maketitle

\begin{abstract}
For a non-compact $n$-manifold $M$
 let $\HH(M)$ be the group of homeomorphisms of $M$
 endowed with the Whitney topology
 and $\HH_c(M)$ the subgroup of $\HH(M)$ 
 consisting of homeomorphisms with compact support. 
It is shown that the group $\HH_c(M)$ is locally contractible and 
the identity component $\HH_0(M)$ of $\HH(M)$  
is an open normal subgroup in $\HH_c(M)$. 
This induces the topological factorization $\HH_c(M) \approx \HH_0(M) \times \M_c(M)$ 
for the mapping class group $\M_c(M) = \HH_c(M)/\HH_0(M)$ with the discrete topology.  
Furthermore, for any non-compact surface $M$,
 the pair $(\HH(M), \HH_c(M))$ 
 is locally homeomorphic to $(\square^\w l_2,\cbox^\w l_2)$
 at the identity $\id_M$ of $M$. 
Thus the group $\HH_c(M)$ is an $(l_2\times\IR^\infty)$-manifold.
We also study topological properties of 
 the group $\DD(M)$ of diffeomorphisms of a non-compact smooth $n$-manifold $M$
 endowed with the Whitney $C^\infty$-topology and 
 the subgroup $\DD_c(M)$ of $\DD(M)$ consisting of all diffeomorphisms 
 with compact support. 
It is shown that the pair $(\DD(M),\DD_c(M))$ is locally homeomorphic 
 to $(\square^\w l_2, \cbox^\w l_2)$ at the identity $\id_M$ of $M$.  
Hence the group $\DD_c(M)$ is a topological
 $(l_2 \times \IR^\infty)$-manifold for any dimension $n$.
\end{abstract}

\section{Introduction}

In this paper we study topological properties of groups of homeomorphisms
 and diffeomorphisms of non-compact manifolds
 endowed with the Whitney topology 
 and recognize their local topological type. 

For a $\sigma$-compact $n$-manifold $M$ possibly with boundary
 let $\HH(M)$ denote the group of homeomorphisms of $M$
 endowed with the Whitney topology 
 and $\HH_c(M)$ the subgroup of $\HH(M)$ 
 consisting of homeomorphisms with compact support.
In this topology,
 each $f \in\HH(M)$ has the fundamental neighborhood system 
$$\U(f) = \big\{g\in \HH(M):(f,g)\prec\U \big\} 
 \quad (\U\in\cov(M)),$$ 
 where $\cov(M)$ is the set of open covers of $M$ and  
 the notation $(f,g)\prec\U$ means that $f$ and $g$ are {\em $\U$-near}
 (i.e., every point $x\in M$ admits a set $U\in\U$ with $f(x), g(x) \in U$) 
(Proposition~\ref{l_W-top}). 
The {\em support} of $h \in \HH(M)$ is defined by 
$$\supp(h) = \cl\big\{x \in M : h(x) \neq x\big\}.$$
The group $\HH(M)$ is a topological group
 and the identity component $\HH_0(M)$ of $\HH(M)$ lies in
 the subgroup $\HH_c(M)$ (Proposition~\ref{compsupp}\,(1)). 

In case $M$ is a compact $n$-manifold,
 the Whitney topology on the group $\HH(M)$ coincides
 with the compact-open topology.
Hence, $\HH(M)$ is completely metrizable and locally contractible 
 (\cite{Cer}, \cite{EK}). 
When $M$ is a compact surface (or a finite graph),
 the group $\HH(M)$ is an $l_2$-manifold
 (the case of finite graphs is the result of \cite{An};
 the case of compact surfaces is a combination of
 the results of \cite{LM}, \cite{Geo} and \cite{T1}).
For $n \geq 3$, 
 it is still an open problem 
 if the homeomorphism group $\HH(M)$ of a compact $n$-manifold $M$
 is an $l_2$-manifold \cite{AB},
 which is called the Homeomorphism Group Problem \cite{W-Prob}. 

In this paper we are concerned with the case that
 $M$ is a noncompact $\sigma$-compact $n$-manifold.  
In this case the Whitney topology on $\HH(M)$ is still very important
 in Geometric Topology, 
 but it has rather bad local properties. 
Our observations in this paper mean that
 the subgroup $\HH_c(M)$ of $\HH(M)$ is quite nice
 from the topological viewpoint. 
First we recall basic facts on the local topological models
 of the groups $\HH(M)$ and $\HH_c(M)$. 

A {\em Fr\'echet space} is a completely metrizable
 locally convex topological linear space and
 an LF-space is the direct (or inductive) limit of increasing sequence of
 Fr\'echet spaces in the category of locally convex topological linear spaces.
A topological characterization of LF-spaces is given in \cite{BR}.
The simplest non-trivial example of an LF-space is $\IR^\infty$,
 the direct limit of the tower
$$\IR^1\subset \IR^2\subset \IR^3\subset\cdots,$$
 where each space $\IR^n$ is identified with
 the hyperplane $\IR^n\times\{0\}\subset\IR^{n+1}$.
In \cite{Man} P.~Mankiewicz studied LF-spaces and proved that
 an infinite-dimensional separable LF-space is homeomorphic to ($\approx$)
 either $\IR^\infty$, $l_2$ or $l_2\times\IR^\infty$. 
The space $l_2\times\IR^\infty$ is homeomorphic to
 the countable small box power $\cbox^\w l_2$ of the Hilbert space $l_2$,
 which is a subspace of the box power $\square^\w l_2$.
One should notice that
 the box power $\square^\w l_2$ is neither locally connected,
 nor sequential, nor normal (see \cite{GZ}, \cite{Wil1}).

In the papers \cite{Ban} and \cite{BMS}
 we have already shown that
$$(\HH(\IR), \HH_c(\IR)) \approx (\HH([0,\infty)), \HH_c([0,\infty)))
 \approx (\square^\w l_2, \cbox^\w l_2).$$
Moreover, it is proved in \cite{BMS} that
 if $M$ is a non-compact separable graph
 then $\HH_c(M)$ is an $(l_2\times\IR^\infty)$-manifold.
Thus, as a non-compact version of the Homeomorphism Group Problem, 
 we can expect the following:

\begin{conj}
For any non-compact $\sigma$-compact $n$-manifold $M$ possibly with boundary,
 the pair $(\HH(M), \HH_c(M))$
 is locally homeomorphic to $(\square^\w l_2,\cbox^\w l_2)$ 
 at the identity $\id_M$ of $M$. 
In particular,
 the group $\HH_c(M)$ is an $(l_2\times\IR^\infty)$-manifold.
 \footnote
{Because of the {\bf non-paracompactness} of $\square^\w l_2$,
we avoid to say  
``$\square^\w l_2$-manifold'' or
``$(\square^\w l_2,\cbox^\w l_2)$-manifold pair''.}	
\end{conj}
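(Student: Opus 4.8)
The plan is to reduce the non-compact problem to the compact Homeomorphism Group Problem by exhibiting an explicit box-product structure on a Whitney neighborhood of $\id_M$. Since $M$ is $\sigma$-compact, I would choose a proper map $M\to[0,\infty)$ whose regular level sets are compact separating hypersurfaces, and let $B_1,B_2,\dots$ be the resulting locally finite family of compact codimension-zero blocks, glued along collared common boundaries (allowing corners when $\partial M\ne\emptyset$). For each $i$ write $\HH(B_i,\partial B_i)$ for the group of homeomorphisms of $B_i$ that are the identity on a neighborhood of $\partial B_i$. The target local model is the pair $\big(\square_i\HH(B_i,\partial B_i),\ \cbox_i\HH(B_i,\partial B_i)\big)$, and the whole argument is the higher-dimensional analogue of the identifications obtained for $\IR$, $[0,\infty)$ and graphs in \cite{Ban} and \cite{BMS}.

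The central construction is a fragmentation/straightening map defined on a neighborhood of $\id_M$. Given $g\in\HH(M)$ sufficiently Whitney-close to $\id_M$, I would first isotope $g$, canonically and continuously, to agree with the identity on each separating hypersurface (pushing the discrepancy into the collars by an isotopy-extension argument), after which $g$ preserves each block and restricts to some $g_i\in\HH(B_i,\partial B_i)$; since the $g_i$ have pairwise disjoint supports in the block interiors, $g$ is recovered as their union. The key point is that, by the Whitney neighborhood basis of Proposition~\ref{l_W-top}, an open cover of $M$ may be refined independently on each block, so this assignment is a local homeomorphism from a neighborhood of $\id_M$ in $\HH(M)$ onto a box neighborhood in $\square_i\HH(B_i,\partial B_i)$, and a homeomorphism lies in $\HH_c(M)$ exactly when $g_i=\id$ for all but finitely many $i$, matching the small box product $\cbox_i$.

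Granting this identification, the conclusion follows from a stability lemma for box products: if each factor $\HH(B_i,\partial B_i)$ is locally homeomorphic to $l_2$ at $\id$, then $\square_i\HH(B_i,\partial B_i)$ is locally homeomorphic to $\square^\w l_2$ and $\cbox_i\HH(B_i,\partial B_i)$ to $\cbox^\w l_2$ at the constant identity point. Combined with $\cbox^\w l_2\approx l_2\times\IR^\infty$ and Mankiewicz's classification \cite{Man} of separable LF-spaces, this gives that $\HH_c(M)$ is an $(l_2\times\IR^\infty)$-manifold.

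The hard part is precisely the per-block input: that the relative homeomorphism group $\HH(B_i,\partial B_i)$ of a compact $n$-manifold is an $l_2$-manifold, locally $l_2$ at $\id$. For $n\le 2$ this is exactly the combination of \cite{LM}, \cite{Geo}, \cite{T1} (through local contractibility and a Toru\'nczyk-type recognition of $l_2$-manifolds), so the scheme succeeds and the conjecture holds for every non-compact surface. For $n\ge 3$ it is the unsolved Homeomorphism Group Problem \cite{AB}, \cite{W-Prob}, which is why the statement can only be conjectured in general. By contrast, in the smooth category the per-block model is the separable Fr\'echet space of $C^\infty$ vector fields vanishing near $\partial B_i$, which is homeomorphic to $l_2$ by the Anderson--Kadec theorem; hence the identical scheme proves the diffeomorphism analogue $(\DD(M),\DD_c(M))$ to be locally $(\square^\w l_2,\cbox^\w l_2)$ in all dimensions $n$.
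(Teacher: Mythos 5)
This statement is the paper's \emph{Conjecture}: the paper itself proves it only for $n=1,2$ (and the smooth analogue in all dimensions), so a correct submission must either close the known gaps or reduce to them honestly. You do attempt the latter, and your overall architecture (block decomposition from a proper function, box/small box local model, per-piece $l_2$-manifold input) is recognizably the same as the paper's scheme via exhausting sequences and Proposition~\ref{p_loc-homeo}. But your reduction misplaces the obstruction, and the step where it hides is exactly your ``fragmentation/straightening map.'' You assert that a homeomorphism $g$ Whitney-close to $\id_M$ can be \emph{canonically and continuously} isotoped to agree with the identity on each separating hypersurface ``by an isotopy-extension argument.'' In the topological category with $n\ge 3$ this is not available: it is essentially the local section property for the restriction map $r:\HH(M)\to\EE(L,M)$ at the inclusion of a codimension-zero piece $L$, which the paper explicitly flags as an open problem (Subsection~\ref{Homeo-gr}). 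The Edwards--Kirby deformation theorem \cite{EK} yields only the \emph{weak} local section property (Lemma~\ref{lem_WLSP_Homeo}): the straightening homeomorphism depends continuously on $g$ restricted to a strictly larger piece $N\supset L$, so it does not invert a restriction map and does not produce a product chart. That weaker input suffices for the multiplication map $p:\cbox_i\HH(M,K_i)\to\HH_c(M)$ to admit local sections, hence for local contractibility (Proposition~\ref{prop_Homeo_LC}), but not for the box-product local homeomorphism.

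This is also why the paper's local model is not $\square_i\HH(B_i,\partial B_i)$ but
$(\square,\cbox)_{i}\,\widehat\EE^G(L_{2i},M)\times(\square,\cbox)_{i}\,G(L_{2i-1})$:
the even-indexed pieces are recorded as \emph{embeddings} (precisely because one cannot canonically convert them into block homeomorphisms without a genuine local section), the odd-indexed pieces carry the residual homeomorphism, and the even/odd interleaving is needed so that each family is discrete and $\lambda_\LL$ is an open embedding (your consecutive blocks share boundaries, so $\square_i\HH(B_i,\partial B_i)$ does not map to $\HH(M)$ by ``union''). Consequently your claim that the sole remaining obstacle for $n\ge 3$ is the compact Homeomorphism Group Problem is not accurate: even a positive solution of that problem would not make your scheme run, since the local section property for $\EE(L,M)$ (known for $n=2$ by Theorem~\ref{thm_bdl_surface} and in the smooth category by Theorem~\ref{t_bdl}, open for topological $n\ge 3$) is a separate, independently open ingredient; conversely one also needs the embedding spaces $\EE^\star(L,M)$, not just the block groups, to be $l_2$-manifolds. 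Your smooth-case paragraph is essentially right in spirit, but there too the correct citation is to the bundle theorems of Cerf/Palais giving honest local sections, not merely to the Anderson--Kadec identification of the model Fr\'echet space.
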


Here, we say that a pair $(X',X)$ of topological spaces $X\subset X'$ is {\em locally homeomorphic to} a pair $(Y',Y)$ if each point $x\in X$ has an open neighborhood $U\subset X'$ such that the pair $(U,U\cap X)$ is homeomorphic to the pair $(V,V\cap Y)$ for some open set $V\subset Y'$. 

In this paper we first show that the group $\HH_c(M)$ is locally contractible 
 in any dimension $n$. 

\begin{prop}\label{prop_Homeo_LC} 
For any $\sigma$-compact $n$-manifold $M$ possibly with boundary,
 the group $\HH_c(M)$ is locally contractible.
\end{prop}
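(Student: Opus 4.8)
The plan is to use that $\HH_c(M)$ is a topological group, hence homogeneous: left translation by a fixed $f\in\HH_c(M)$ is a self-homeomorphism of $\HH_c(M)$, so it transports a local contraction at the identity to a local contraction at $f$. It therefore suffices to prove local contractibility at $\id_M$. Fix an open cover $\U\in\cov(M)$, which determines the basic neighborhood $N(\U)=\{g\in\HH_c(M):(g,\id_M)\prec\U\}$ of $\id_M$ (Proposition~\ref{l_W-top}). I want to produce a finer cover $\W\in\cov(M)$ and a homotopy $h\colon N(\W)\times[0,1]\to N(\U)$ with $h(g,0)=g$, $h(g,1)=\id_M$ and $h(\id_M,t)=\id_M$ for all $t$; such an $h$ is exactly a contraction of the neighborhood $N(\W)$ of $\id_M$ within $N(\U)$, which yields local contractibility at $\id_M$.

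The essential input is the local deformation theorem of Edwards and Kirby \cite{EK} (resting on the work in \cite{Cer}). In the form I would use it, it provides, for a chart of $M$ modeled on $\IR^n$ or on the half-space $\IR^n_+$ and for a compact core $C$ inside an open set $U$ of that chart, a neighborhood of $\id$ in the homeomorphism group and a continuous, canonical deformation $g\mapsto(g_t)_{t\in[0,1]}$ with $g_0=g$, $g_1=\id$ on $C$, each $g_t$ agreeing with $g$ off $U$, $\id_t=\id$, and---crucially for the Whitney topology---a size control guaranteeing that $g_t$ stays as $\U$-close to $\id$ as we wish, provided $g$ is close enough to $\id$ to begin with. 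Because $g_t=g$ off $U$, this deformation preserves compact support, so it restricts to $\HH_c(M)$.

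I would then patch these local deformations over a locally finite chart cover $\{U_i\}$ of $M$ subordinated to $\U$, with compact cores $C_i\subset U_i$ whose interiors still cover $M$. Given $g\in N(\W)$, I would straighten $g$ to $\id_M$ by performing the canonical local isotopies one chart at a time and concatenating them, reparametrizing the (possibly infinitely many) stages into $[0,1]$, say letting stage $i$ run on $[1-2^{-i+1},\,1-2^{-i}]$. Since $\supp g$ is compact, all but finitely many stages leave $g$ untouched, so the concatenation actually reaches $\id_M$ at $t=1$, while $\id_M$ is fixed at every stage. The cover $\W$ is to be chosen fine enough, relative to $\U$ and to the size-control constants of the local deformations, that the accumulated displacement keeps the whole track $(g_t)$ inside $N(\U)$.

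The main obstacle I expect is the continuity of $h\colon N(\W)\times[0,1]\to\HH_c(M)$ in the Whitney topology, together with keeping its entire track inside $N(\U)$. The difficulty is that, unlike in the compact--open setting, a homeomorphism arbitrarily close to $\id_M$ in the Whitney topology may have arbitrarily large support, so the number of stages acting nontrivially is \emph{not} bounded on any neighborhood, and one cannot argue continuity by reducing globally to finitely many stages. The resolution I would pursue is to verify continuity pointwise over $M$: since the chart cover is locally finite, each point of $M$ meets only finitely many $U_i$, so \emph{locally in $M$} the deformation is a finite composition of canonical isotopies, where the Edwards--Kirby continuity and size estimates apply directly. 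Propagating the size control so that the accumulated motion of all stages remains $\U$-small is the delicate quantitative point, and it is what forces $\W$ to be built from $\U$ by an iterated star-refinement matched to the constants furnished by the local deformation theorem. I expect this coordination of local finiteness in $M$ with the cover-by-cover size bookkeeping to be the crux of the proof.
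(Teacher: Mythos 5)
Your reduction to local contractibility at $\id_M$ via homogeneity, and your identification of the Edwards--Kirby deformation theorem \cite{EK} as the essential input, both match the paper. The gap is in the patching mechanism: you concatenate infinitely many chart-by-chart isotopies in time, reparametrized so that stage $i$ occupies $[1-2^{-i+1},1-2^{-i}]$, and you yourself flag the continuity of the resulting $h$ at $t=1$ (together with the accumulated size control) as an expectation rather than an argument. That is not a deferrable detail --- it is precisely the difficulty that distinguishes the Whitney topology on a non-compact manifold from the compact case --- and your proposed resolution (``verify continuity pointwise over $M$ using local finiteness'') does not suffice as stated: continuity of $h$ at a point $(g_0,1)$ is a condition involving an arbitrary target cover $\V\in\cov(M)$ simultaneously over all of $M$, while for $g$ near $g_0$ and $t$ near $1$ the value $h(g,t)$ is the output of an unbounded number of stages whose input-precision requirements vary from chart to chart. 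Controlling this would require at least relative versions of the local deformations (so that later stages do not disturb earlier cores --- without this, $h(g,1)=\id_M$ is not even guaranteed), a bounded-multiplicity cover, and an iterated star-refinement scheme, none of which you specify.

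The paper avoids the infinite time-concatenation altogether. It splits an exhausting sequence into two \emph{discrete} families of compact annuli, so that the deformations supported on the members of one family have pairwise disjoint supports and can be performed \emph{simultaneously} on a common time interval; the bookkeeping is delegated to the small box product $\cbox_{i\in\IN}\HH(M,K_i)$, whose multiplication map onto $\HH_c(M)$ admits a local section at $\id_M$ (Lemma~\ref{l_local_section}, built from the weak local section property of Lemma~\ref{lem_WLSP_Homeo}, which is where Edwards--Kirby enters). Local contractibility of each factor $\HH(M,K_i)\approx\HH(M_i,\bd_M M_i)$ is the compact case of \cite{EK}, and the simultaneous coordinatewise contraction of the small box product is continuous by Lemma~\ref{lem_map}, whose proof uses exactly the two features your scheme lacks: compactness of the single time interval and the fact that each coordinate contraction fixes the basepoint, so that all but finitely many coordinates' entire tracks can be forced into any prescribed neighborhood of the identity. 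If you want to salvage your approach, the cleanest route is to reorganize your charts into finitely many discrete subfamilies and run each subfamily's deformations simultaneously --- which essentially recreates the paper's argument.
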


Thus the identity component $\HH_0(M)$ is an open normal subgroup
 of $\HH_c(M)$ and it induces the topological factorization
$$\HH_c(M) \approx \HH_0(M) \times \M_c(M),$$ 
 where $\M_c(M) = \HH_c(M)/\HH_0(M)$ is the {\em mapping class group} of $M$
 with the discrete topology. 

As mentioned above, 
 the conjecture above has been proved in the case $n = 1$, see \cite{BMS}.
Here we solve the conjecture affirmatively in the case $n = 2$. 

\begin{theo}\label{thm_main_1}
Suppose $M$ is a non-compact $\sigma$-compact $2$-manifold
 possibly with boundary. 
Then the pair $(\HH(M), \HH_c(M))$ 
 is locally homeomorphic to $(\square^\w l_2,\cbox^\w l_2)$ 
 at the identity $\id_M$ of $M$. 
In particular, the group $\HH_c(M)$ is an $(l_2\times\IR^\infty)$-manifold.
\end{theo}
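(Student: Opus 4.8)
The plan is to realize a neighborhood of $\id_M$ in the pair $(\HH(M),\HH_c(M))$ as a box product of the homeomorphism groups of compact pieces into which $M$ is cut, and then to recognize each such factor as an open subset of $l_2$ by invoking the compact-surface theory of \cite{LM}, \cite{Geo}, \cite{T1}. The whole argument generalizes the $1$-dimensional and graph cases of \cite{Ban}, \cite{BMS}, where this scheme was carried out; the surface case adds the difficulty of two-dimensional gluing regions.

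First I would fix a decomposition of $M$ into compact pieces. Since $M$ is a non-compact $\sigma$-compact $2$-manifold, one can choose an exhaustion $K_1\subset K_2\subset\cdots$ by compact subsurfaces with $K_i\subset\tint K_{i+1}$ and $\bigcup_i K_i=M$, arranged so that each frontier is a bicollared compact $1$-submanifold (a disjoint union of circles and properly embedded arcs). Setting $L_i=\cl(K_{i+1}\setminus K_i)$, each $L_i$ is a compact surface with boundary, $M$ is glued from the $L_i$ along the separating curves, and the support of any element of $\HH_c(M)$ meets only finitely many of the $L_i$.

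The core step is then a fragmentation map. For a sufficiently fine open cover $\U$, every $h\in\U(\id_M)$ moves each separating curve only slightly and keeps it inside the chosen bicollar, so a canonical collar-combing deformation isotopes $h$ to a homeomorphism fixing all the curves, thereby writing $h$ as a product $\prod_i h_i$ with $h_i\in\HH(L_i,\partial L_i)$ supported in $L_i$ relative to its boundary. The two claims to verify are: (i) the assignment $h\mapsto(h_i)_i$ is a homeomorphism of a neighborhood of $\id_M$ in $\HH(M)$ onto an open subset of the box product $\square_i\,\HH(L_i,\partial L_i)$, the box topology arising precisely because the Whitney topology forces $\U_i$-closeness on each piece independently; and (ii) it carries $\HH_c(M)$ onto the small box subproduct $\cbox_i\,\HH(L_i,\partial L_i)$, since compact support corresponds to $h_i=\id$ for all but finitely many $i$. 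Here I would lean on the local contractibility supplied by Proposition~\ref{prop_Homeo_LC} together with the bicontinuity of the curve-straightening to produce the inverse map.

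Finally, each factor $\HH(L_i,\partial L_i)$, being the homeomorphism group of a compact surface fixing its boundary, is an $l_2$-manifold by \cite{LM}, \cite{Geo}, \cite{T1}; passing to a chart replaces each factor by an open subset of $l_2$. Substituting these charts into the box and small-box products identifies $(\U(\id_M),\,\U(\id_M)\cap\HH_c(M))$ with an open subset of $(\square^\w l_2,\cbox^\w l_2)$, which is exactly the asserted local homeomorphism; since $\cbox^\w l_2\approx l_2\times\IR^\infty$ and $\HH_c(M)$ is a topological group (hence homogeneous), a single chart at $\id_M$ yields the $(l_2\times\IR^\infty)$-manifold conclusion. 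The hard part will be step~(i)--(ii): making the fragmentation canonical and bicontinuous and checking that the product topology it induces is exactly the box topology. The gluing and collar regions for surfaces are considerably more delicate than for graphs, and guaranteeing that the straightening operation preserves continuity uniformly across all ends --- that is, at infinity --- is the crux of the proof.
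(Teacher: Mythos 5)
There is a genuine gap at the heart of your step (i): the map $h\mapsto(h_i)_{i}$ cannot be a homeomorphism of a neighborhood of $\id_M$ onto an open subset of $\square_{i}\HH(L_i,\partial L_i)$. Every element in the image of the gluing map $\square_{i}\HH(L_i,\partial L_i)\to\HH(M)$ fixes each separating curve pointwise, so that image is the proper closed subgroup $\HH\bigl(M,\bigcup_i \bd_M K_i\bigr)$, which is not a neighborhood of the identity; by contrast, homeomorphisms arbitrarily Whitney-close to $\id_M$ move the curves. Hence your collar-combing straightening necessarily discards the data of where $h$ sends the curves, the assignment $h\mapsto(h_i)_i$ is not injective, and no ``canonical'' choice of straightening can repair this. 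The correct local model must keep, for alternate pieces, the embedding $h|_{L_{2i}}$ itself as a coordinate: the paper proves
\[
(\HH(M),\HH_c(M))\;\approx_\ell\;(\square,\cbox)_{i\in\IN}\,\EE^{\star}(L_{2i},M)\;\times\;(\square,\cbox)_{i\in\IN}\,\HH(M,M\setminus\tint_M L_{2i-1}),
\]
where $\EE^{\star}(L_{2i},M)$ is the space of extendable embeddings of the compact subsurface $L_{2i}$ into $M$ with the compact-open topology.

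Two ingredients absent from your outline make that decomposition work. First, the pieces must be split into the two subfamilies $(L_{2i})_{i}$ and $(L_{2i-1})_{i}$, each discrete in $M$ (the full family $(L_i)_i$ is not discrete, since adjacent pieces share frontier curves), so that the infinite compositions and the restriction map into the box product of embedding spaces are defined and continuous. Second, and decisively, one needs a continuous local section of the restriction map $\HH(M)\to\EE^{\star}(L,M)$ at the inclusion for a compact subsurface $L$ --- the bundle/section theorem for embedding spaces in $2$-manifolds (Theorem~\ref{thm_bdl_surface}, resting on \cite{Yag}, \cite{Y2}, \cite{LM}). This is precisely the surface-specific input that makes dimension $2$ tractable while $n\ge 3$ remains open; the local contractibility of Proposition~\ref{prop_Homeo_LC}, which you lean on to ``produce the inverse map,'' is far too weak to supply it. Your final step is sound once the correct factors are in place, since both $\EE^{\star}(L_{2i},M)$ and $\HH(L_{2i-1},\partial L_{2i-1})$ are $l_2$-manifolds, but as written the chart you construct does not exist.
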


Any $\sigma$-compact $2$-manifold $M$ possibly with boundary
 admits a PL-structure \cite{Moise}.
When $M$ is equipped with a PL-structure, 
 the symbol $\HH^{PL}(M)$ denotes the subgroup of $\HH(M)$
 consisting of PL-homeomorphisms with respect to this  PL-structure. 
A subspace $A$ of a space $X$ is said to be {\em homotopy dense} (abbrev.\ HD)
 if there exists a homotopy $\phi_t : X \to X$ such that
 $\phi_0 = \id_X$ and $\phi_t(X) \subset A$ $(t \in (0,1])$. 

\begin{prop}\label{prop_main_2}
Suppose $M$ is a non-compact $\sigma$-compact PL 
$2$-manifold
 possibly with boundary.
Then the subgroup $\HH^{PL}_c(M)$ is homotopy dense in $\HH_c(M)$. 
\end{prop}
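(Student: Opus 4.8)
The plan is to construct explicitly a homotopy $\phi\colon\HH_c(M)\times[0,1]\to\HH_c(M)$ with $\phi_0=\id$ and $\phi_t(\HH_c(M))\subset\HH^{PL}_c(M)$ for every $t\in(0,1]$; by the definition of homotopy density recalled above, such a $\phi$ is exactly what we must produce. The guiding principle is that a homeomorphism with compact support can be PL-approximated \emph{rel a neighborhood of infinity} in a way that (a) depends continuously on the homeomorphism and (b) can be made arbitrarily fine, the admissible fineness being governed by the parameter $t$, so that $\phi_t(h)\to h$ as $t\to 0$.

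First I would fix, via Moise's theorem, a locally finite PL-triangulation $T$ of $M$ together with an exhaustion $M=\bigcup_n N_n$ by compact PL-subsurfaces with $N_n\subset\tint N_{n+1}$, and a metric on each $N_n$. The crucial observation is that compact support tames the Whitney topology: if $h$ and a candidate $g$ both have support inside a fixed compact set $K$ and coincide with the identity off $K$, then $g\in\U(h)$ for a prescribed $\U\in\cov(M)$ as soon as $g$ is uniformly close to $h$ on $K$ in the fixed metric. Thus the delicate cover-dependence of the Whitney topology reduces, for the purposes of this construction, to ordinary uniform approximation on compacta, which is what makes a controlled deformation feasible.

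Next comes the local approximation engine. For a homeomorphism of a PL $2$-disk that is the identity near the boundary, two-dimensional approximation theory (Rad\'o--Moise, together with the Schoenflies theorem and Alexander coning over finely subdivided disks) furnishes PL-approximations rel boundary; the real content of this step is to render the assignment \emph{canonical and continuous} in the homeomorphism and of controllable mesh. I would then straighten a general compactly supported $h$ skeleton by skeleton: first PL-approximate the images under $h$ of the edges of $T$ by embedded PL-arcs, and then fill in over each $2$-simplex rel the already-straightened boundary circle using the local engine. Since $h$ has compact support, only the finitely many simplices meeting $\supp(h)$ are altered, so the output is again a compactly supported PL-homeomorphism. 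Letting the permitted mesh tend to $0$ as $t\to0$ and setting $\phi_0(h)=h$ defines $\phi$; continuity at $t=0$ and the equality $\phi_0=\id$ then follow from the reduction in the previous paragraph.

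The hard part will be the joint continuity of this approximation operator together with the compatibility of the straightening across adjacent simplices. One must guarantee that the PL-fillings over neighbouring $2$-simplices agree along their common edge (so that the local pieces glue to a \emph{global} homeomorphism of $M$), that each filling is injective, and that all of this varies continuously as both $h$ and the mesh parameter move. This is precisely where dimension $2$ is indispensable: the Schoenflies theorem and the planar approximation theorems provide canonical, continuously varying fillings of disks rel boundary, a feature with no analogue in higher dimensions. Verifying that the resulting $\phi$ is continuous as a map on $\HH_c(M)\times[0,1]$ with the Whitney topology, uniformly over the non-compact exhaustion, is the main technical burden of the argument.
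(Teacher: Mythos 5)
Your outline identifies the right difficulties but does not resolve them, and the unresolved part is the entire mathematical content of the statement. The ``local approximation engine'' --- a \emph{canonical, continuous, mesh-controlled} PL-approximation of a homeomorphism of a disk rel boundary, together with injectivity of the fillings and compatibility along shared edges --- is precisely the theorem of Geoghegan--Haver and Luke--Mason that for a compact $2$-manifold $N$ and subpolyhedron $K\subsetneqq N$ the subgroup $\HH^{PL}(N,K)$ is homotopy dense in $\HH(N,K)$. You propose to rebuild it from Rad\'o--Moise, Schoenflies and Alexander coning, but you explicitly defer the construction (``the hard part will be\dots'', ``the main technical burden''). Alexander coning and edge-straightening are not continuous in the homeomorphism without substantial extra work (choice of cone points, reparametrization of straightened edges, keeping the straightened image graph embedded), and none of that is supplied. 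So as written the argument has a gap exactly where the proof has to happen.

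There is also a specific false reduction. You claim that compact support ``tames'' the Whitney topology, so that continuity of $\phi$ reduces to uniform approximation on a fixed compact set. That works for comparing $\phi_t(h)$ with $h$ for a \emph{fixed} $h$, but not for joint continuity of $\phi$ on $\HH_c(M)\times[0,1]$: a Whitney neighborhood $\U(h_0)$ of $h_0\in\HH_c(M)$ contains homeomorphisms whose supports are not contained in any compact set depending only on $h_0$ (one may perturb $h_0$ within $\U$ arbitrarily far out), so the approximation must be controlled by open covers of $M$ uniformly at infinity, not by a metric on a compact set depending on $h$. The paper avoids both problems at once: it quotes the compact case (Theorem~\ref{thm_LM}\,(ii)) as a black box for each piece $M_i$ of an exhausting sequence, shows via the bundle theorem for embedding spaces of compact subpolyhedra (Theorem~\ref{thm_bdl_surface}) that the multiplication map $p:\cbox_{i\in\IN}\HH(M,M\setminus\tint_M M_i)\to\HH_c(M)$ admits local sections, and then transports the coordinatewise absorbing homotopies through $\phi_t=p\psi_t s$, using that homotopy density is preserved by small box products (Proposition~\ref{prop_HD}) and is a local property on the paracompact space $\HH_c(M)$. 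If you want to salvage your approach, the efficient fix is to keep your global picture only as motivation and to route the actual argument through the compact case plus the local factorization of $\HH_c(M)$ as a small box product, as in Lemma~\ref{lem_loc-sec_exh}\,(3).
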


We also study the local topological type of
 groups of diffeomorphisms of non-compact smooth manifolds endowed
 with the Whitney $C^\infty$-topology. 
For a smooth $n$-manifold $M$,
 let $\DD(M)$ denote the group of all diffeomorphisms 
 of $M$ endowed with the Whitney $C^\infty$-topology.  
Let $\DD_0(M)$ denote the identity component of 
 the diffeomorphism group $\DD(M)$ and 
 $\DD_c(M)$ denote the subgroup of $\DD(M)$ 
 consisting of all diffeomorphisms of $M$
 with compact support. 

\begin{theo}\label{thm_main_2}
For a non-compact $\sigma$-compact smooth $n$-manifold $M$ without boundary,
 the pair $(\DD(M),\DD_c(M))$ is locally homeomorphic
 to $(\square^\w l_2, \cbox^\w l_2)$ at the identity $\id_M$ of $M$. 
In particular, 
 the group $\DD_c(M)$ is a topological $(l_2 \times \IR^\infty)$-manifold.
\end{theo}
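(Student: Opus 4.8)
The plan is to linearize the problem at the identity via a chart and then to recognize the resulting linear model as a box product of Hilbert spaces. First I would fix a complete Riemannian metric on $M$ together with an exhaustion $M=\bigcup_{k\ge 1}M_k$ by compact codimension-$0$ submanifolds with $M_k\subset\tint M_{k+1}$, and set $A_k=\cl(M_k\setminus M_{k-1})$ (with $M_0=\varnothing$), so that $\{A_k\}$ is a locally finite cover of $M$ by compact pieces. The exponential map of the metric supplies a chart at the identity: for a smooth vector field $X$ of sufficiently small Whitney $C^\infty$-size, the formula $x\mapsto\exp_x X(x)$ defines a diffeomorphism $f_X\in\DD(M)$, and $X\mapsto f_X$ is a homeomorphism of a neighborhood of $0$ in the space $\Gamma(TM)$ of smooth vector fields, equipped with the Whitney $C^\infty$-topology, onto a neighborhood $\U$ of $\id_M$ in $\DD(M)$. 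Since $\supp f_X=\supp X$, this chart carries the vector fields of compact support exactly onto $\U\cap\DD_c(M)$. It therefore suffices to prove that the pair $(\Gamma(TM),\Gamma_c(TM))$ is locally homeomorphic at $0$ to $(\square^\w l_2,\cbox^\w l_2)$. This linear setting is the main advantage over the homeomorphism-group case of Theorem~\ref{thm_main_1}: local contractibility, for instance, is immediate from the scaling $X\mapsto tX$, and the coordinatewise group structure makes ``local at $0$'' equivalent to the assertion near $\id_M$.

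The conceptual heart of the argument is that, read off along the exhaustion, the Whitney $C^\infty$-topology \emph{is} a box topology. A fundamental system of neighborhoods of $0$ in $\Gamma(TM)$ is obtained by prescribing, for each $k$, an order $r_k\in\IN$ and a bound $\varepsilon_k>0$ and taking
$$\big\{\,X\in\Gamma(TM):\ \|X|_{A_k}\|_{C^{r_k}}<\varepsilon_k\ \text{ for all }k\,\big\},$$
where the sequences $(r_k)$ and $(\varepsilon_k)$ vary with no uniformity across $k$ --- which is precisely the defining feature of a basic box neighborhood. Restriction to the pieces thus realizes $\Gamma(TM)$ as a subspace of the box product $\square_k\,\Gamma(A_k)$, and a vector field has compact support if and only if all but finitely many of its restrictions vanish, matching the small box product. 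Non-compactness of $M$ guarantees infinitely many nonempty pieces $A_k$, and each such $\Gamma(A_k)$ is an infinite-dimensional separable Fr\'echet space, hence homeomorphic to $l_2$ by the Anderson--Kadec theorem; so the intended target pair is indeed $(\square^\w l_2,\cbox^\w l_2)$.

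The step I expect to be the main obstacle is upgrading this description to a genuine homeomorphism of \emph{pairs} onto the \emph{full} box and small-box products. The difficulty is that adjacent pieces share the boundary hypersurfaces $\partial M_k$, and a diffeomorphism close to $\id_M$ need not preserve them; consequently the vector fields on neighboring pieces are linked by matching conditions along $\partial M_k$, and the naive telescoping decomposition $X\mapsto\big((\rho_k-\rho_{k-1})X\big)_k$ (with cut-offs $\rho_k\equiv 1$ on $M_k$ and $\supp\rho_k\subset\tint M_{k+1}$) realizes $\Gamma(TM)$ only as a \emph{retract} of an appropriate box product, not as the whole of it. Absorbing these codimension-one matching conditions --- showing that the closed ``matching subspace'' is itself homeomorphic to the full $(\square^\w l_2,\cbox^\w l_2)$ --- is the technical crux, and I would carry it out with the same box-product absorption machinery that drives the surface case of Theorem~\ref{thm_main_1}, where the analogous gluing along separating curves is handled. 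The continuity verifications here are delicate precisely because $\square^\w l_2$ is neither sequential nor normal, so they must be performed directly on basic box neighborhoods rather than along sequences. Once the matching is absorbed, the pair $(\Gamma(TM),\Gamma_c(TM))$, and hence $(\DD(M),\DD_c(M))$ near $\id_M$, is homeomorphic to $(\square^\w l_2,\cbox^\w l_2)$; the concluding statement that $\DD_c(M)$ is an $(l_2\times\IR^\infty)$-manifold then follows from $\cbox^\w l_2\approx l_2\times\IR^\infty$ together with the homogeneity of the topological group $\DD_c(M)$.
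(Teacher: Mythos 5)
Your overall strategy --- linearize at $\id_M$ by an exponential chart $X\mapsto f_X$ and then analyze the pair $(\Gamma(TM),\Gamma_c(TM))$ of Whitney-topologized vector fields --- is genuinely different from the paper, which never linearizes: it works directly with the nonlinear restriction maps $r:\DD(M)\to\EE^\infty(L,M)$ to compact $n$-submanifolds and uses the classical bundle theorem (Cerf, Palais) to produce local sections. Your reduction to the linear model is plausible and, if completed, would arguably be cleaner. However, the step you yourself single out as the crux is where the argument has a real gap, and the fix you propose is not viable as stated. Restricting $X$ to \emph{all} the pieces $A_k$ embeds $\Gamma(TM)$ only as the closed ``matching'' subspace of $\square_k\Gamma(A_k)$ (or realizes it as a linear retract via cut-offs), and you propose to show this subspace is the whole of $(\square^\w l_2,\cbox^\w l_2)$ by ``box-product absorption machinery'' as in the surface case. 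No such machinery exists or is used in the paper: $\square^\w l_2$ is neither normal nor paracompact, so the standard absorbing-set techniques of infinite-dimensional topology do not apply to it, and a closed linear retract of a box product need not be homeomorphic to the ambient box product (compare the clopen subgroup $c_0\subset\square^\w\IR$ used in Proposition \ref{compsupp}). Moreover, the surface case does \emph{not} absorb matching conditions along the separating curves; it avoids them entirely.

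The device that actually closes this gap --- in the paper and equally in your linear model --- is the alternating (parity) decomposition of Lemma \ref{l_LSP} and Proposition \ref{p_loc-homeo}: one restricts only to the \emph{discrete} family of even-indexed pieces $L_{2i}$, uses a local section of the restriction map (in your setting, a Seeley-type continuous linear extension operator $E_i:\Gamma(TM|_{N_{2i}})\to\Gamma(TM)$ with image supported in pairwise disjoint neighborhoods $N_{2i}$) to split those coordinates off as a genuine full box factor, and observes that the complementary factor consists of fields supported in the pairwise \emph{disjoint} odd pieces, which is an honest box product $\square_i\,\Gamma(A_{2i-1})$ with no matching conditions at all. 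With that substitution your argument goes through and recovers $(\square,\cbox)_{i}\,l_2\times(\square,\cbox)_{i}\,l_2\approx(\square^\w l_2,\cbox^\w l_2)$. One further small caveat: the scaling homotopy $(X,t)\mapsto tX$ is \emph{not} jointly continuous on $\Gamma(TM)$ with the Whitney topology (the box topology kills joint continuity of scalar multiplication at $t=0$ for non-compactly supported $X$), so ``local contractibility is immediate from scaling'' is only correct on the small-box part $\Gamma_c(TM)$; this is harmless for the statement at hand but should not be asserted for the full group $\DD(M)$.
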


This implies that the identity component $\DD_0(M)$
 is an open normal subgroup of $\DD_c(M)$
 and it induces the topological factorization
$$\DD_c(M) \approx \DD_0(M) \times \M_c^\infty(M),
 \hspace{5mm} \M_c^\infty(M) = \DD_c(M)/\DD_0(M).$$
 
In \cite{BY} we have shown that 
$(\DD(\IR),\DD_c(\IR)) \approx (\square^\w l_2, \cbox^\w l_2).$ 
In the succeeding paper \cite{BMSY},
 we determine the global topological types of the groups $\HH_c(M)$ 
 for non-compact surfaces $M$ and 
 the groups $\DD_c(M)$ for some kind of non-compact smooth $n$-manifolds $M$. 

This paper is organized as follows: 
Section \ref{Box-prod} contains the basic facts on
 the box products and the small box products, and 
 Section \ref{Basic} contains generalities on homeomorphism groups
 with the Whitney topology. 
In Section \ref{Trans-gr} we introduce some fundamental notations
 on transformation groups and in Section \ref{Trans-gr-strong-top}
 we formulate the notion of strong topology on transformation
 groups and study some basic properties. 
In Section \ref{Homeo-Diffeo} we apply these results
 to groups of homeomorphisms and diffeomorphisms of noncompact manifolds
 and prove Theorems~\ref{thm_main_1} and \ref{thm_main_2} together with 
Propositions~\ref{prop_Homeo_LC} and \ref{prop_main_2}. 

%%%%%%%%%%%%%%%%%%%%%%%%%%%%%%

\section{Box and small box products} 
\label{Box-prod}

In this section
 we recall some basic properties on box products and small box products. 
Let $\w$ and $\IN$ denote the sets of non-negative integers
 and positive integers, respectively. 

\begin{definition}
(1) The {\em box product} \ $\square_{n\in\w}X_n$
 of a sequence of topological spaces $(X_n)_{n\in\w}$
is the countable product $\prod_{n\in\w}X_n$
 endowed with the box topology generated
 by the base consisting of boxes $\prod_{n\in\w}U_n$,
 where $U_n$ is an open subset of $X_n$.
\smallskip

(2) The {\em small box product} \ $\cbox_{n\in\w}X_n$ of
 a sequence of pointed spaces $(X_n,*_n)_{n\in\w}$
 is the subspace of $\square_{n\in\w}X_n$ defined by
$$\cbox_{n\in\w}X_n = \big\{(x_n)_{n\in\w}\in\square_{n\in\w}X_n :
 \exists \,m\in\w\;\forall n\ge m,\  x_n=*_n \,\big\}.$$

(3) The pair $(\square_{n\in\w}X_n, \cbox_{n\in\w}X_n)$
 is denoted by the symbol  
 $(\square, \cbox)_{n\in\w}X_n$. 
\end{definition}

The small box product $\cbox_{n\in\w}X_n$
 has a canonical distinguished point $(*_n)_{n\in\w}$. 
For a sequence of subsets $A_n \subset X_n$ ($n\in\w$),
 let
$$\cbox_{n\in\w}A_n = \cbox_{n\in\w}X_n \cap \square_{n\in\w}A_n,$$
 where it is not assumed that $\ast_n \in A_n$.
If $\ast_n \not\in A_n$ for infinitely many $n \in\w$
 then $\cbox_{n\in\w}A_n = \emptyset$.
Identifying $\prod_{i\le n}X_i$ with the closed subspace
 $\big\{(x_i)_{i\in\w}\in\cbox_{i\in\w}X_i : x_i=*_i \ (i>n) \big\}$,
 we can regard $\cbox_{n\in\w}X_n = \bigcup_{n\in\w}\prod_{i\le n}X_i$.
When $X_n = X$ for all $n\in\w$,
 we write $\square^\w X$ and $\cbox^\w X$
 instead of $\square_{n\in\w}X_n$ and $\cbox_{n\in\w}X_n$,
 which are called the {\em box power} and the {\em small box power} of $X$,
 respectively.
Then we can regard $\cbox^\w X = \bigcup_{n\in\w}X^n$,
 where $X \subset X^2 \subset X^3 \subset \cdots$.

\begin{proposition}\label{para-cbox}
If each finite product $\prod_{i\leq n} X_i$ is paracompact,
 then the small box product $\cbox_{i\in\w} X_i$ is also paracompact.
\end{proposition}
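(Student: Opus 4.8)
The plan is to realize $\cbox_{i\in\w}X_i$ as the increasing union $\bigcup_{n\in\w}Y_n$ of the closed subspaces $Y_n=\prod_{i\le n}X_i$ and, for an arbitrary open cover, to build a $\sigma$-locally finite \emph{box}-open refinement; since the box product of regular spaces is regular and $\cbox_{i\in\w}X_i$ inherits regularity as a subspace (the factors $X_i$ being regular, as closed subspaces of the paracompact Hausdorff spaces $Y_n$), Michael's theorem then yields paracompactness. First I would record two structural facts. Each projection $\pr_i\colon\cbox_{i\in\w}X_i\to X_i$ is continuous (the box topology refines the product topology), so $Y_n=\bigcap_{i>n}\pr_i^{-1}(*_i)$ is closed and, by hypothesis, paracompact. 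Moreover the truncations
$$r_n\colon\cbox_{i\in\w}X_i\to Y_n,\qquad r_n\big((x_i)_{i\in\w}\big)=(x_0,\dots,x_n,*_{n+1},*_{n+2},\dots)$$
are continuous retractions with $r_n|_{Y_n}=\id$ and $r_n(x)=x$ once $n$ exceeds the support level of $x$; these supply the vertical structure along which the refinement is assembled.

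Next, fixing an open cover $\U$ and writing $N_n=Y_n\setminus Y_{n-1}$ (with $Y_{-1}=\emptyset$), one has $\bigcup_n N_n=\cbox_{i\in\w}X_i$, so it suffices to produce for each $n$ a family $\V_n$ of box-open subsets that refines $\U$, covers $N_n$, and is locally finite in the \emph{whole} space; then $\bigcup_n\V_n$ is the desired $\sigma$-locally finite open refinement. Since $Y_n$ is paracompact, I would take a locally finite open (in $Y_n$) refinement of the trace $\{U\cap Y_n:U\in\U\}$ covering the portion $N_n$, and then \emph{thicken} each such set $W$ to a box-open set $\widetilde W$ lying inside a single member of $\U$: concretely, intersect $r_n^{-1}(W)$ with a box constraint $\{x:x_i\in G_i\ (i>n)\}$ on the higher coordinates, choosing each $G_i\ni *_i$ small enough that $\widetilde W$ stays inside the chosen member of $\U$ and avoids a prescribed neighborhood of $Y_{n-1}$.

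The main obstacle is precisely this thickening, because its two requirements pull against each other. Openness of $\widetilde W$ in the box topology forces the sets to ``open up'' in all coordinates beyond $n$, yet it is exactly this opening-up that threatens local finiteness deep inside the product, where a point of large support could a priori meet thickenings from every level. Controlling this requires choosing the constraints $G_i$ via the retractions so that the level-$n$ family is supported in a box-neighborhood of $N_n$ disjoint from $Y_{n-1}$, and then checking that any point $y$, lying in some $N_m$, has a single box-neighborhood meeting only finitely many members of $\bigcup_{n\le m}\V_n$ and \emph{none} of the $\V_n$ with $n>m$. Verifying this last insulation — that a point of support level $m$ is genuinely separated from all sufficiently high levels by one box — is the technical heart of the argument, and it is here that the hypothesis that every finite product $Y_n$ is paracompact (hence normal, permitting the shrinking of the $W$'s and the insertion of separating neighborhoods of $Y_{n-1}$) is fully used.
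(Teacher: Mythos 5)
Your overall strategy --- writing $\cbox_{i\in\w}X_i=\bigcup_{n}\prod_{i\le n}X_i$, using paracompactness of each finite product to produce a locally finite family at each level, thickening to box-open sets, and concluding via the $\sigma$-locally finite refinement characterization --- is exactly the paper's. But the thickening step as you order it has a genuine gap. You first take a locally finite refinement $\{W\}$ of the trace $\{U\cap Y_n:U\in\U\}$ and only afterwards try to choose a single tail constraint $\cbox_{i>n}G_i$ with $W\times\cbox_{i>n}G_i$ inside the chosen $U\in\U$. No such uniform tail need exist: $U$ is box-open, so each $y\in W$ has a basic box $\prod_i B_i^y\subset U$, but the tails $B_i^y$ $(i>n)$ may shrink as $y$ varies, and if $W$ does not fit inside the head of one such box there is no admissible $\{G_i\}$. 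For instance, with $X_i=\IR$, $\ast_i=0$, $n=0$, the set $U=\bigcup_{k\in\IZ}\bigl((k-1,k+1)\times\cbox_{i\ge1}(-\tfrac{1}{|k|+1},\tfrac{1}{|k|+1})\bigr)$ is box-open with $U\cap Y_0=\IR$, and $W=\IR$ (a perfectly good locally finite refinement of the trace) admits no uniform tail. The repair is to reverse the order, as the paper does: first cover $Y_n$ by basic boxes $\cbox_{i\in\w}U_i^x$ each contained in a member of $\U$, then take a locally finite refinement $\U_n$ of the cover $\{\prod_{i\le n}U_i^x\}$ of $Y_n$; every $U\in\U_n$ lies in some $\prod_{i\le n}U_i^x$, and the tail $\cbox_{i>n}U_i^x$ of that same box is then automatically a valid uniform thickening, giving $V_U=U\times\cbox_{i>n}U_i^x$.

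Second, you make local finiteness much harder than it is. For $\sigma$-local finiteness you only need each level family $\V_n$ to be locally finite on its own; the ``insulation'' you demand at the end --- one box around a point of support level $m$ meeting finitely many members of $\bigcup_{n\le m}\V_n$ and \emph{none} of $\V_n$ for $n>m$ --- is the (unneeded) local finiteness of the whole union, and you leave it unverified. With $V_U=U\times\cbox_{i>n}U_i^x$ the needed property is immediate: given any point of $\cbox_{i\in\w}X_i$, its truncation to $Y_n$ has a neighborhood $W$ in $Y_n$ meeting only finitely many $U\in\U_n$, and $W\times\cbox_{i>n}X_i$ is then an open neighborhood in $\cbox_{i\in\w}X_i$ meeting only the corresponding $V_U$. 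Consequently the sets $N_n=Y_n\setminus Y_{n-1}$, the separating neighborhoods of $Y_{n-1}$, and the shrinking via normality are all unnecessary: simply let $\V_n$ cover all of $Y_n$. Your appeal to Michael's theorem together with the regularity check is fine (and slightly more careful than the paper's bare citation), but as written the proof does not close.
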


\begin{proof}
By the characterization of paracompactness (Theorem 5.1.11 in \cite{En}),
 it suffices to show that every open cover $\U$ of $\cbox_{i\in\w} X_i$
 has a $\sigma$-locally finite open refinement.
For each $n \in \IN$,
 we shall construct a locally finite open collection $\V_n$
 in $\cbox_{i\in\w} X_i$ which covers $\prod_{i\leq n} X_i$
 and refines $\U$.
Each $x \in \prod_{i\leq n} X_i$ has a basic open neighborhood
 $\cbox_{i\in\w} U_i^x$ which is contained some member of $\U$.
Then $\{\prod_{i\leq n} U_i^x : x \in \prod_{i\leq n} X_i\}$
 is an open cover of $\prod_{i\leq n} X_i$,
 which has a locally finite open refinement $\U_n$.
For each $U \in \U_n$,
 choose $x \in \prod_{i\leq n} X_i$ so that $U \subset \prod_{i\leq n} U_i^x$
 and define $V_U = U \times \cbox_{i>n} U_i^x$.
Then $\V_n = \{V_U : U \in \U_n\}$ $(n \in \w)$
 satisfy the required conditions.  
Consequently,
 $\bigcup_{n\in\IN}\V_n$ is a $\sigma$-locally finite open refinement of $\U$.
\end{proof} 

A sequence of maps $\phi^n : (X_n, \ast_n) \to (Y_n, \ast_n)$ ($n \in \w$)
 induces a continuous map 
$$\cbox_{n\in\w} \phi^n : \cbox_{n\in\w}X_n \to \cbox_{n\in\w}Y_n, \ 
(\cbox_{n\in\w} \phi^n)((x_n)_{n\in\w}) = (\phi^n(x_n))_{n\in\w}.$$  

\begin{lemma}\label{lem_map} 
For a compact space $K$ and a sequence of maps
$\phi^n :(X_n \times K, \{\ast_n\} \times K) \to (Y_n, \ast_n)$ $(n \in \w)$, 
 the map 
$$\Phi : \big(\cbox_{n\in\w}X_n\big) \times K \to \cbox_{n\in\w}Y_n, \ 
\Phi\big((x_n)_{n\in\w}, y) = (\phi^n(x_n, y))_{n\in\w}.$$ 
is continuous. 
\end{lemma}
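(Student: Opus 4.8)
The plan is to verify continuity directly from the definition of the box topology, exploiting compactness of $K$ to handle the infinitely many ``tail'' coordinates simultaneously. First I would check that $\Phi$ is well defined: if $(x_n)_{n\in\w}\in\cbox_{n\in\w}X_n$, choose $m\in\w$ with $x_n=\ast_n$ for all $n\ge m$; then the hypothesis $\phi^n(\{\ast_n\}\times K)=\{\ast_n\}$ gives $\phi^n(x_n,y)=\ast_n$ for $n\ge m$ and every $y\in K$, so $\Phi\big((x_n)_{n\in\w},y\big)\in\cbox_{n\in\w}Y_n$.

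To prove continuity at a point $\big((x_n)_{n\in\w},y_0\big)$, I would start from an arbitrary basic neighborhood $G=\big(\square_{n\in\w}W_n\big)\cap\cbox_{n\in\w}Y_n$ of its image, where each $W_n$ is open in $Y_n$ and $\phi^n(x_n,y_0)\in W_n$. Fixing $m$ as above, we have $\ast_n\in W_n$ for all $n\ge m$.

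The two ranges of indices are then treated differently. For the finitely many indices $n<m$, joint continuity of $\phi^n$ at $(x_n,y_0)$ yields open sets $U_n\ni x_n$ in $X_n$ and $V_n\ni y_0$ in $K$ with $\phi^n(U_n\times V_n)\subset W_n$; set $V=\bigcap_{n<m}V_n$, still an open neighborhood of $y_0$. For the infinitely many indices $n\ge m$ the key point is that the whole slice $\{\ast_n\}\times K$ is carried into $W_n$ (it maps to the single point $\ast_n\in W_n$); since $K$ is compact, the tube lemma applied to the open set $(\phi^n)^{-1}(W_n)$ produces an open set $U_n\ni\ast_n$ in $X_n$ with $\phi^n(U_n\times K)\subset W_n$. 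Setting $U=\big(\square_{n\in\w}U_n\big)\cap\cbox_{n\in\w}X_n$, a basic neighborhood of $(x_n)_{n\in\w}$, one checks immediately that $\Phi(U\times V)\subset G$: for any $\big((x'_n)_{n\in\w},y'\big)\in U\times V$ and $n<m$ use $x'_n\in U_n$ together with $y'\in V\subset V_n$, while for $n\ge m$ use $x'_n\in U_n$ and $y'\in K$.

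The main obstacle is precisely the pathology of the box topology: since the small (and full) box products are in general neither sequential nor first countable, no limit or sequence argument is available and one must argue directly with box-shaped neighborhoods. The crux is therefore that the tail coordinates cannot be controlled by shrinking the $K$-neighborhood, as that would impose infinitely many conditions on a single $V$; compactness of $K$, through the tube lemma, is exactly what lets each tail coordinate be handled uniformly in $y$, reducing the constraints on $V$ to the finitely many head indices.
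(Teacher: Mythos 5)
Your proof is correct and follows essentially the same route as the paper's: reduce to a basic box neighborhood, use joint continuity at the finitely many head coordinates, and use compactness of $K$ (the tube lemma) to control each tail coordinate uniformly in $y$ so that only finitely many constraints fall on the $K$-neighborhood. The explicit well-definedness check and the naming of the tube lemma are welcome additions but do not change the argument.
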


\begin{proof} 
The proof is straightforward. 
Take any point $((x_n)_{n\in\w},y)$ of $\cbox_{n\in\w} X_n \times K$ and
 any open neighborhood $V$ of $\Phi((x_n)_{n\in\w},y)$ in $\cbox_{n\in\w} Y_n$.
We may assume that $V$ is of the form $V = \cbox_{n\in\w} V_n$, 
 where $V_n$ is an open neighborhood of $\phi^n(x_n, y)$ in $X_n$.
There exists $m \in \omega$ such that $x_n = \ast_n$ for $n > m$. 
For $n = 0, 1, \cdots, m$,
 choose open neighborhoods $U_n$ of $x_n$ in $X_n$ and $W_n$ of $y$ in $K$
 such that $\phi_n(U_n \times W_n) \subset V_n$. 
For $n > m$, since $\phi_n(\{ \ast_n \} \times K) = \{ \ast_n \} \subset V_n$ 
 and $K$ is compact,
 there exists an open neighborhood $U_n$ of $x_n = \ast_n$ in $X_n$
 such that $\phi^n(U_n \times K) \subset V_n$. 
Then $U = \cbox_{n\in\w} U_n$ and $W = \bigcap_{n=0}^m W_n$ 
 are open neighborhoods of $(x_n)_n$ in $\cbox_{n\in\w} X_n$ and $y$ in $K$,
 respectively.
Now, it is easy to see that $\Phi(U \times W) \subset V$.
This completes the proof.
\end{proof} 

For example, a sequence of homotopies  
 $\phi^n_t : (X_n, \ast_n) \to (Y_n, \ast_n)$ ($n \in \w$)
 induces a homotopy 
$$\cbox_{n\in\IN} \phi^n_t : \cbox_{n\in\IN}X_n \to \cbox_{n\in\IN}Y_n.$$  
This simple observation leads to some useful consequences. 

\begin{definition}\label{def_HD} 
A subspace $A$ of a space $X$ is called {\em homotopy dense} (abbrev.\ HD)
 in $X$ rel.\ a subset $A_0$ of $A$ if there exists a homotopy
 $\phi_t : X \to X$ $(t \in [0,1])$ such that 
 $\phi_0 = \id_X$, $\phi_t|_{A_0} = \id$ and
 $\phi_t(X) \subset A$ $(t \in (0, 1])$.  
\end{definition} 

\begin{proposition}\label{prop_HD} 
Let $(X_n, A_n, \ast_n)_{n\in\w}$ be a sequence of pointed pair of spaces. 
If each $A_n$ is HD in $X_n$ rel.\ the point $\ast_n$,
 then $\cbox_{n\in\w}A_n$ is HD in $\cbox_{n\in\w}X_n$. 
\end{proposition}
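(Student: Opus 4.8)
The plan is to assemble the coordinate homotopies into a single homotopy on the small box product by applying Lemma~\ref{lem_map} with the compact parameter space $K = [0,1]$. For each $n\in\w$, let $\phi^n_t : X_n \to X_n$ $(t\in[0,1])$ be a homotopy witnessing that $A_n$ is HD in $X_n$ rel.\ the point $\ast_n$, so that $\phi^n_0 = \id_{X_n}$, $\phi^n_t(\ast_n) = \ast_n$ for every $t$, and $\phi^n_t(X_n) \subset A_n$ for $t\in(0,1]$. The condition $\phi^n_t(\ast_n)=\ast_n$ says exactly that, regarded as a map $\phi^n : X_n\times[0,1]\to X_n$, it carries $\{\ast_n\}\times[0,1]$ into $\{\ast_n\}$; that is, $\phi^n : (X_n\times[0,1],\{\ast_n\}\times[0,1]) \to (X_n,\ast_n)$ is a pointed map of the kind required by Lemma~\ref{lem_map}.

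First I would invoke Lemma~\ref{lem_map} with $K=[0,1]$ and these maps $\phi^n$ (taking $Y_n = X_n$), which produces a continuous map $\Phi : \big(\cbox_{n\in\w}X_n\big)\times[0,1] \to \cbox_{n\in\w}X_n$ given by $\Phi\big((x_n)_{n\in\w},t\big) = (\phi^n_t(x_n))_{n\in\w}$. Setting $\phi_t = \Phi(\,\cdot\,,t)$, continuity of $\Phi$ yields a homotopy $\phi_t : \cbox_{n\in\w}X_n \to \cbox_{n\in\w}X_n$, and $\phi_0 = \id$ because each $\phi^n_0 = \id_{X_n}$.

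It then remains to verify that $\phi_t\big(\cbox_{n\in\w}X_n\big) \subset \cbox_{n\in\w}A_n$ for $t\in(0,1]$. Fix $(x_n)_{n\in\w}\in\cbox_{n\in\w}X_n$ and choose $m$ with $x_n=\ast_n$ for $n>m$. For $n>m$ the rel.\ condition gives $\phi^n_t(x_n)=\phi^n_t(\ast_n)=\ast_n$, so the image $(\phi^n_t(x_n))_{n\in\w}$ again has all but finitely many coordinates equal to $\ast_n$ and therefore lies in the small box product. For $t\in(0,1]$ every coordinate moreover satisfies $\phi^n_t(x_n)\in A_n$: for $n\le m$ by the HD property, and for $n>m$ because $\phi^n_t(x_n)=\ast_n\in A_n$ (note $\ast_n\in A_n$, since $\{\ast_n\}$ is assumed to be a subset of $A_n$ in the rel.\ hypothesis). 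Hence $\phi_t\big((x_n)_{n\in\w}\big)\in\cbox_{n\in\w}A_n$, as required.

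The argument is essentially routine once Lemma~\ref{lem_map} supplies the continuity of the glued homotopy; the only point demanding care — and the one place where the hypotheses are genuinely used — is the last step. Specifically, the assumption that each $A_n$ is HD \emph{rel.\ $\ast_n$} (rather than merely HD) is what guarantees both that the coordinatewise image never leaves the small box product and that the stabilized coordinates $\ast_n$ belong to $A_n$; without it the image could acquire infinitely many nontrivial coordinates and escape $\cbox_{n\in\w}X_n$.
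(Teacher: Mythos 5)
Your proof is correct and follows exactly the route the paper intends: the paper states Proposition~\ref{prop_HD} without proof, relying on the remark immediately after Lemma~\ref{lem_map} that a sequence of pointed homotopies induces a homotopy of small box products, which is precisely your application of that lemma with $K=[0,1]$. Your verification that the image stays in $\cbox_{n\in\w}A_n$ for $t\in(0,1]$, using the rel.\ $\ast_n$ condition, correctly supplies the details the paper leaves implicit.
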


\begin{remark}\label{rem_HD} 
(1) Suppose $X$ is a metrizable space,
 $A_0 \subset A \subset X$ and $A_0$ is a closed subset of $X$. 
If $A$ is HD in $X$, then $A$ is HD in $X$ rel.\ $A_0$. 

(2) Suppose $G$ is a topological group and $H$ is a subgroup of $G$. 
If $H$ is HD in $G$, then $H$ is HD in $G$
 rel.\ the identity element $e$ of $G$. 
\end{remark}

\begin{definition}\label{d_LC}
{\rm (1)} A subspace $A$ of a space $X$ is called {\em contractible} in $X$
 (rel.\ a point $a \in A$) if there exists a homotopy
 $\phi_t : A \to X$ $(t \in [0,1])$ such that 
 $\phi_0 = \id_A$ and $\phi_1(A)$ is a singleton
 ($\phi_t(a) = a$ for every $t \in [0,1]$,
 whence $\phi_1(A) = \{a\}$). 

{\rm (2)} A space $X$ is called 
({\em strongly}) {\em locally contractible} at $x \in X$
 if every neighborhood $U$ of $x$ contains
 a neighborhood $V$ of $x$ which is contractible in $U$ (rel.\ $x$).

{\rm (3)} A pointed space $(X,\ast)$ is said to be
 (i) {\em locally contractible}
 if $X$ is locally contractible at any point of $X$ and
 strongly locally contractible at the point $\ast$, and 
 (ii) {\em contractible} if $X$ is contractible in $X$ rel.\ $\ast$. 
\end{definition}

\begin{proposition}\label{p_LC}
If pointed spaces $(X_n,\ast_n)$ $(n\in\w)$ are (locally) contractible,
 then the small box product $\cbox_{n\in\w} X_n$
 is also (locally) contractible as a pointed space.
\end{proposition}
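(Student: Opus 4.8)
The plan is to build every required contraction coordinatewise and assemble the pieces using Lemma~\ref{lem_map} with $K=[0,1]$: a sequence of based homotopies $\phi^n_t : (X_n,\ast_n)\to(X_n,\ast_n)$ $(n\in\w)$ yields a continuous, basepoint-preserving homotopy
$$\Phi_t = \cbox_{n\in\w}\phi^n_t : \cbox_{n\in\w}X_n \to \cbox_{n\in\w}X_n,
\qquad \Phi_t\big((x_n)_{n\in\w}\big) = \big(\phi^n_t(x_n)\big)_{n\in\w}.$$
I would dispatch the contractible case and strong local contractibility at the distinguished point $\ast=(\ast_n)_{n\in\w}$ directly this way, and then reduce local contractibility at an arbitrary point to these.

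First suppose each $(X_n,\ast_n)$ is contractible, so there are homotopies $\phi^n_t : X_n \to X_n$ with $\phi^n_0 = \id_{X_n}$, $\phi^n_1(X_n)=\{\ast_n\}$ and $\phi^n_t(\ast_n)=\ast_n$. The induced $\Phi_t$ above then satisfies $\Phi_0 = \id$, $\Phi_1 \equiv \ast$ and $\Phi_t(\ast)=\ast$, so $\cbox_{n\in\w}X_n$ is contractible rel.\ $\ast$. For strong local contractibility at $\ast$, take a basic neighborhood $\cbox_{n\in\w}U_n$ of $\ast$, with each $U_n$ a neighborhood of $\ast_n$. Using strong local contractibility of $X_n$ at $\ast_n$, I would choose a neighborhood $V_n$ of $\ast_n$ together with a contraction $\phi^n_t : V_n \to U_n$ rel.\ $\ast_n$. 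Since every $\phi^n_t$ fixes $\ast_n$, the assembled homotopy $\cbox_{n\in\w}\phi^n_t$ is well defined into $\cbox_{n\in\w}U_n$ and continuous by Lemma~\ref{lem_map}, and it contracts $\cbox_{n\in\w}V_n$ to $\ast$ inside $\cbox_{n\in\w}U_n$ rel.\ $\ast$.

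The remaining and main point is local contractibility at an arbitrary $x = (x_n)_{n\in\w}$, where $x_n = \ast_n$ for all $n \geq m$. The obstacle is that for the finitely many coordinates $n < m$ one has only local contractibility of $X_n$ at $x_n$ \emph{without} a basepoint condition, so the coordinate contractions there need not fix $\ast_n$ and Lemma~\ref{lem_map} does not apply verbatim. I would isolate these coordinates via the natural homeomorphism
$$\cbox_{n\in\w}X_n \ \approx\ \Big(\prod_{n<m}X_n\Big)\times\Big(\cbox_{n\geq m}X_n\Big),$$
under which $x$ corresponds to $\big((x_n)_{n<m},\ast'\big)$, where $\ast'=(\ast_n)_{n\geq m}$ is the distinguished point of the tail factor. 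The finite product $\prod_{n<m}X_n$ is locally contractible at $(x_n)_{n<m}$, and by the case already proved the tail $\cbox_{n\geq m}X_n$ is strongly, hence locally, contractible at $\ast'$. Since a product of two spaces that are locally contractible at $p$ and $q$ respectively is locally contractible at $(p,q)$ (take the product of the two coordinate contractions), the displayed product is locally contractible at $\big((x_n)_{n<m},\ast'\big)$, and therefore $\cbox_{n\in\w}X_n$ is locally contractible at $x$.

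Alternatively, one can avoid the decomposition and argue directly: choose $V_n$ and $\phi^n_t$ using strong local contractibility rel.\ $\ast_n$ for $n\geq m$ and ordinary local contractibility for $n<m$; the assembled $\cbox_{n\in\w}\phi^n_t$ is still well defined into $\cbox_{n\in\w}U_n$ because its tail preserves basepoints, and its continuity follows from the same tube-lemma argument as in Lemma~\ref{lem_map}, the finitely many exceptional coordinates $n<m$ being handled by continuity of finite products. The hardest part is precisely this continuity bookkeeping at a non-distinguished point; everything else reduces to a mechanical application of Lemma~\ref{lem_map}.
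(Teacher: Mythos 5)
Your proof is correct and is essentially the argument the paper intends: Proposition~\ref{p_LC} is stated there without proof, as an immediate consequence of the observation following Lemma~\ref{lem_map} that coordinatewise based homotopies assemble to a continuous homotopy of the small box product. Your explicit treatment of the finitely many coordinates $n<m$ at a non-distinguished point (via the splitting $\cbox_{n\in\w}X_n \approx \prod_{n<m}X_n\times\cbox_{n\geq m}X_n$) supplies exactly the detail the paper leaves to the reader, and it is handled correctly.
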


\begin{remark}\label{loc-contr-TG}
A space $X$ is called {\em semi-locally contractible} at a point $x \in X$
if $x$ has a neighborhood $V$ in $X$ which contracts in $X$.
It is easy to see that if a topological group $G$
 is semi-locally contractible at the identity element $e \in G$
 then $G$ is strongly locally contractible at every $x \in G$,
 hence the pointed space $(G,e)$ is locally contractible.
Indeed,
 if $h : V_0 \times [0,1] \to G$ is a contraction
 of a neighborhood $V_0$ of $e \in G$, then 
we can define a contraction $h' :  V_0 \times [0,1] \to G$ by
 $h'(x,t) = h(e,t)^{-1}h(x,t)$. 
Since $h'(\{e\} \times [0,1]) = \{e\}$,
 every neighborhood $U$ of $e$ contains a neighborhood $V$ of $e$
 such that $h'(V \times [0,1]) \subset U$.
Then the restriction $h'|_{V \times [0,1]}$ is a contraction of $V$ in $U$
 fixing the identity element $e$. 
Since the topological group $G$ is homogeneous,
 it follows that $G$ is strongly locally contractible at every $x \in G$.
\end{remark}

Finally we discuss the box products of topological groups. 
As usual, we regard a topological group as a pointed space
 by distinguishing the identity element.
For topological groups $(G_n)_{n\in\w}$, 
 the box product $\square_{n\in\w}G_n$ is a topological group 
 under the coordinatewise multiplication,  
 and the small box product $\cbox_{n\in\w}G_n$ is 
 a topological subgroup of $\square_{n\in\w}G_n$.
 
Suppose $G$ is a topological group with the identity element $e \in G$. 
Any sequence $(G_n)_{n\in\w}$ of subgroups of $G$ induces 
 the natural multiplication map
$$p :\cbox_{n\in\w}G_n\to G, \ \
 p(x_0,\dots,x_k,e,e,\dots) = x_0 \cdot x_1\cdots x_k.$$

\begin{lemma}\label{lem1}
The map $p$ is continuous.
\end{lemma}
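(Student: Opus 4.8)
The plan is to verify continuity of $p$ at an arbitrary point $x = (x_0,\dots,x_k,e,e,\dots) \in \cbox_{n\in\w}G_n$, where $p(x) = g := x_0 x_1 \cdots x_k$, by producing, for each neighborhood $W$ of $g$ in $G$, a basic open box $\cbox_{n\in\w}U_n \ni x$ with $p(\cbox_{n\in\w}U_n) \subset W$. The difficulty is that a point $y$ of such a box may have arbitrarily many (though always finitely many) coordinates different from $e$, so $p(y)$ is a product of an unbounded number of factors; one cannot control it by a single finite application of the continuity of multiplication. I expect this \emph{unbounded tail} to be the only real obstacle, and it is handled by choosing the coordinate neighborhoods $U_n$ to shrink rapidly as $n \to \infty$.

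First I would record the standard shrinking-neighborhood fact in a topological group: given a neighborhood $N =: N_0$ of $e$, using continuity of multiplication at $(e,e)$ I choose neighborhoods $N_1 \supset N_2 \supset \cdots$ of $e$ with $N_i N_i \subset N_{i-1}$ for every $i \ge 1$. An easy downward induction (from $j = m$) shows $N_j N_{j+1}\cdots N_m \subset N_{j-1}$, whence $N_1 N_2 \cdots N_m \subset N_0 = N$ for all $m$. Thus an arbitrarily long product of elements, the $i$-th drawn from $N_i$, still lies in $N$.

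Next I would split $W$ using continuity of multiplication at $(g,e)$ to obtain neighborhoods $W_0$ of $g$ and $N$ of $e$ with $W_0 \cdot N \subset W$. For the first block of coordinates, continuity of the $(k+1)$-fold multiplication $G^{k+1} \to G$ at $(x_0,\dots,x_k)$ yields neighborhoods $U_i$ of $x_i$ in $G$ $(i \le k)$ with $U_0 U_1 \cdots U_k \subset W_0$, and I set the $i$-th box coordinate to $U_i \cap G_i$. For the tail I apply the shrinking fact to $N$ and set $U_{k+j} = N_j \cap G_{k+j}$ for $j \ge 1$; each is an open neighborhood of $e = x_{k+j}$ in $G_{k+j}$, so $\cbox_{n\in\w}U_n$ is a basic open neighborhood of $x$.

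Finally, for any $y = (y_0,\dots,y_m,e,\dots) \in \cbox_{n\in\w}U_n$ (taking $m \ge k$ after padding with trailing $e$'s, which lie in the corresponding $U_n$), the head satisfies $y_0 \cdots y_k \in U_0 \cdots U_k \subset W_0$, while the tail satisfies $y_{k+1}\cdots y_m \in N_1 N_2 \cdots N_{m-k} \subset N$; hence $p(y) = (y_0 \cdots y_k)(y_{k+1}\cdots y_m) \in W_0 \cdot N \subset W$. This gives $p(\cbox_{n\in\w}U_n) \subset W$, so $p$ is continuous at $x$, and since $x$ was arbitrary, $p$ is continuous.
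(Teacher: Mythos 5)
Your proof is correct and follows essentially the same route as the paper: both control the unbounded tail by choosing neighborhoods of $e$ with $N_iN_i\subset N_{i-1}$ so that arbitrarily long tail products stay small, and both handle the head by ordinary finite continuity of multiplication. The only cosmetic difference is that the paper pads the point with $x_k=e$ and absorbs the tail into a doubled factor $U_k\cdot U_k$, whereas you split the target neighborhood as $W_0\cdot N\subset W$; these are interchangeable.
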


\begin{proof}
Fix any point $x=(x_0,\dots,x_k)\in\cbox_{n\in\w}G_n$
 and take any neighborhood $V$ of its image $p(x) = x_0\cdots x_k$ in $G$.
Replacing $x$ by a longer sequence if necessary,
 we can assume that $x_k = e$.
By the continuity of the group multiplication,
 find a sequence of neighborhoods $U_n$ of $x_n$ in $G$, $n\le k$, such that
$$U_0\cdot U_1\cdots U_{k-1}\cdot U_k\cdot U_k \subset V.$$

Now, inductively construct a decreasing sequence $(U_n)_{n>k}$
 of open neighborhoods of $e$ in $G$
 such that $U_n\cdot U_n \subset U_{n-1}$ for all $n>k$.
Such a choice will guarantee that
$$U_0\cdots U_{k-1} \cdot U_k \cdot U_{k+1} \cdots U_n
 \subset U_0\cdots U_{k-1}\cdot U_k \cdot U_k \subset V \quad
(n>k).$$
Then the set $U = \square_{n\in\w} U_n \cap \cbox_{n\in\w} G_n$
 is an open neighborhood of $x$ in $\cbox_{n\in\w}G_n$ and
$$p(U) \subset \bigcup_{n > k} (U_0\cdots U_n) \subset V,$$
 which proves the continuity of $p$ at $x$.
\end{proof}

Let $p : X \to Y$ be a continuous map.
A {\em local section} of $p$ at $y \in Y$ is a map $s : V \to X$
 defined on a neighborhood $V$ of $y$ in $Y$ such that $ps = \id$.
When $V = Y$, the map $s$ is called a {\em section} of $p$.

\begin{lemma}\label{lem_multi-section}
Suppose $(G_n)_{n\in\w}$ is a sequence of subgroups of $G$ such that 
$G_n \subset G_{n+1}$ for each $n \in \w$
 and $G = \bigcup_{n \in \w} G_n$. 
 If the multiplication map $p :\cbox_{n \in \w}G_n\to G$ 
has a local section at $e$,  
then the following hold: 
\begin{itemize}
\item[(1)] The map $p$ has  a local section at any point of $G$ and 
a local section $s$ at $e$ with $s(e) = (e, e, \cdots)$. 
\item[(2)] If each $G_n$ is locally contractible, then so is $G$. 
\item[(3)] Suppose $G$ is paracompact and $H$ is a subgroup of $G$. 
If $H_n = H \cap G_n$ is HD in $G_n$ for each $n \in \w$,
 then $H$ is HD in $G$. 
\end{itemize}
\end{lemma}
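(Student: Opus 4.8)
The plan is to transfer each of the three properties from the small box product $\cbox_{n\in\w}G_n$ to $G$ along the multiplication map $p$, continuous by Lemma~\ref{lem1}, using throughout that $p(\cbox_{n\in\w}G_n)=G$ and $p(\cbox_{n\in\w}H_n)=H$ (which follow from $G=\bigcup_n G_n$ and $H=\bigcup_n H_n$). For part~(1), I would first normalize the given local section $s_0\colon V\to\cbox_{n\in\w}G_n$ at $e$. Writing $\alpha=s_0(e)=(a_0,\dots,a_m,e,e,\dots)$, the relation $p(\alpha)=e$ means $a_0a_1\cdots a_m=e$. Setting $g_0=e$ and $g_i=a_0a_1\cdots a_{i-1}$ (so $g_{i+1}=g_ia_i$ and $g_i=e$ for $i>m$), I define $\Psi\colon\cbox_{n\in\w}G_n\to\cbox_{n\in\w}G_n$ coordinatewise by $\Psi((c_n)_n)_i=g_ic_ig_{i+1}^{-1}$. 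Since $g_i,g_{i+1}\in G_i$, each coordinate map sends $G_i$ homeomorphically onto itself, the map is the identity in all coordinates $i>m$, and its inverse is $d_i\mapsto g_i^{-1}d_ig_{i+1}$; hence $\Psi$ is a homeomorphism. A telescoping computation gives $p\circ\Psi=p$ and $\Psi(\alpha)=(e,e,\dots)$, so $s:=\Psi\circ s_0$ is a local section at $e$ with $s(e)=(e,e,\dots)$. For arbitrary $g\in G$, choosing $N$ with $g\in G_N$, I define $P_g((c_n)_n)=(e,\dots,e,g,c_0,c_1,\dots)$, placing $g$ in coordinate $N$ and $c_i$ in coordinate $N+1+i$; the inclusions $G_i\subset G_{N+1+i}$ make this a continuous shift-and-insert map with $p(P_g(c))=g\,p(c)$, so $s_g(y)=P_g\big(s(g^{-1}y)\big)$, defined for $y\in gV$, is a local section of $p$ at $g$.

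For part~(2), Proposition~\ref{p_LC} applies: each $G_n$, being a locally contractible topological group, is locally contractible as a pointed space $(G_n,e)$ by the argument of Remark~\ref{loc-contr-TG}, so $\cbox_{n\in\w}G_n$ is locally contractible at its distinguished point $(e,e,\dots)$. Picking a contraction $H_t$ of a neighborhood $\N$ of $(e,e,\dots)$ to a point and, by continuity of the normalized section $s$, a neighborhood $V'\subset V$ of $e$ with $s(V')\subset\N$, the formula $h(y,t)=p\big(H_t(s(y))\big)$ gives a contraction of $V'$ in $G$, because $h(\cdot,0)=p\circ s=\id$ and $h(\cdot,1)$ is constant. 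Thus $G$ is semi-locally contractible at $e$, and Remark~\ref{loc-contr-TG} upgrades this to local contractibility of the topological group $G$.

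For part~(3), Remark~\ref{rem_HD}(2) shows each $H_n$ is HD in $G_n$ rel.\ $e$, so Proposition~\ref{prop_HD} yields a homotopy $\Phi_t=\cbox_{n\in\w}\psi^n_t$ with $\Phi_0=\id$ and $\Phi_t(\cbox_{n\in\w}G_n)\subset\cbox_{n\in\w}H_n$ for all $t\in(0,1]$; hence $D_t=p\circ\Phi_t$ satisfies $D_0=p$ and $D_t(\cbox_{n\in\w}G_n)\subset H$ for $t>0$. The remaining task is to descend $D_t$ to a homotopy $\phi_t\colon G\to G$ with $\phi_0=\id$ and $\phi_t(G)\subset H$ for $t>0$. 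Using paracompactness of $G$ together with the local sections from part~(1), I would take a locally finite open cover $\{W_\lambda\}$ of $G$ with sections $s_\lambda$ and a subordinate partition of unity $\{\beta_\lambda\}$, and set $\Theta^\lambda_t(g)=p\big(\Phi_{\beta_\lambda(g)t}(s_\lambda(g))\big)$ for $g\in W_\lambda$, extended by the identity off $\supp\beta_\lambda$. Each $\Theta^\lambda_t$ is then a continuous self-map of $G$ with $\Theta^\lambda_0=\id$ that fixes the complement of $W_\lambda$; and because $\Phi_r$ lands in $\cbox_{n\in\w}H_n$ for every $r>0$, the map $\Theta^\lambda_t$ carries $\{\beta_\lambda>0\}$ into $H$ and preserves $H$. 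The homotopy $\phi_t$ is obtained by composing the $\Theta^\lambda_t$ over $\lambda$.

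The hard part will be this last gluing: assembling the (possibly uncountable) family $\{\Theta^\lambda_t\}$ into a single continuous homotopy and verifying continuity of the resulting composition. Here the local finiteness of $\{\supp\beta_\lambda\}$ reduces the composition locally to a finite one, and the observation that the first factor which actually moves a given point already places it in $H$ — while every later factor preserves $H$ — guarantees $\phi_t(G)\subset H$ for $t>0$, whence $H$ is HD in $G$.
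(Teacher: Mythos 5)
Your parts (1) and (2) are correct and follow the paper's route: the paper omits the verification of (1) altogether, and your normalization $\Psi$ (conjugating the $i$-th coordinate by the partial products $g_i=a_0\cdots a_{i-1}$, which lie in $G_i$, so that $\Psi$ preserves $\cbox_{n\in\w}G_n$ and telescopes under $p$) together with the shift-and-insert map $P_g$ supplies it cleanly; part (2) is essentially the paper's argument via Remark~\ref{loc-contr-TG}, Proposition~\ref{p_LC} and the local sections from (1).

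In part (3) the core of your construction --- a local section $s$ composed with the absorbing homotopy $\psi_t$ of $\cbox_{n\in\w}G_n$ into $\cbox_{n\in\w}H_n$ obtained from Remark~\ref{rem_HD}\,(2) and Proposition~\ref{prop_HD}, giving $\phi_t=p\psi_ts$ on a neighborhood --- is precisely the paper's proof. The paper then stops: it uses paracompactness only to assert that this local statement suffices for homotopy density, treating the local-to-global passage as known. You instead attempt to prove that passage by composing the maps $\Theta^\lambda_t$, and that sketch has a genuine gap. Local finiteness of $\{\supp\beta_\lambda\}$ at a point $g$ controls which factors are non-identity \emph{at $g$}, but in an iterated composition each later factor is evaluated at the already-moved image of $g$, which may lie in the supports of arbitrarily many other $\beta_\mu$; so the composition does not ``reduce locally to a finite one,'' need not stabilize, and for an uncountable index set is not even well defined without a transfinite scheme whose continuity would still have to be established. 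Your observation that the first factor with $\beta_\lambda>0$ at the current point sends it into $H$, and that every factor preserves $H$, is correct but presupposes that the composition exists and is continuous. To close the argument you should either prove or cite the local-to-global lemma for homotopy density in paracompact spaces as a separate statement (which is what the paper implicitly does), or redesign the gluing so that only finitely many factors can act along each orbit of the composition.
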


\begin{proof} 
(1) The verification is simple and omitted. 

(2) By Remark~\ref{loc-contr-TG} and Proposition~\ref{p_LC},
 the small box product $\cbox_{n\in\w} G_n$ is locally contractible.
Since the map $p$ has a local section at any point,
 the group $G$ is also locally contractible. 

(3) Since $G$ is paracompact,
 it suffices to show that each $g \in G$ has an open neighborhood  $U$ in $G$
 with a homotopy $\phi_t : U \to G$ such that $\phi_0 = \id$
 and $\phi_t(U) \subset H$ $(t \in (0,1])$. 
By (1) the map $p$ admits a local section $s : U \to \cbox_{n\in\w} G_n$
 at the point $g$. 
By Remark~\ref{rem_HD}\,(2) and Proposition~\ref{prop_HD},
 the small box product $\cbox_{n\in\w} H_n$ is HD in $\cbox_{n\in\w} G_n$
 by an absorbing homotopy $\psi_t$.  
Then the homotopy $\phi_t$ is defined by $\phi_t = p\psi_ts$. 
\end{proof} 

%%%%%%%%%%%%%%%%%%%%%%%%%%%%%%%%%%%%%

\section{Basic properties of homeomorphism groups with the Whitney topology}
\label{Basic} 

In this section, we list some basic properties of the Whitney topology 
 on homeomorphism groups. 
For any topological space $M$, 
 let $\HH(M)$ denote the group of homeomorphisms of $M$ 
 endowed with the Whitney topology.
This topology is generated by the subsets 
$$\U(h) = \big\{g\in \HH(M) : (h,g)\prec\U \big\},
 \quad (h \in \HH(M),\ \U\in\cov(M)),$$ 
 and each $h \in \HH(M)$ has the neighborhood basis
 $\U(h)$ $(\U\in\cov(M))$.
On the space $C(X,Y)$ of all continuous functions from $X$ to $Y$,
 the Whitney topology is usually defined
 as the {\em graph topology} or the {\em $\WO^0$-topology},
 that is, it is generaled by
$$\Gamma_U = \{f\in C(X,Y) : \Gamma_f \subset U\},$$
 where $U$ runs through all open sets in $X \times Y$
 and $\Gamma_f = \{(x,f(x)) : x \in C\}$ is the graph of $f \in C(X,Y)$
 (e.g., see \cite[\S41]{KM}).
The {\em graph topology} or the {\em $\WO^0$-topology} on $\HH(M)$
 is the subspace topology inherited from the space $C(M,M)$ with this topology.
In the space $C(X,Y)$,
 the graph topology is not generaled by the sets
 $$\U(f) = \{g\in C(X,Y) : (f,g)\prec\U\} \ \ 
 (f \in C(X,Y), \ \U\in\cov(Y)).$$
For completeness,
 we give a proof of the following:

\begin{proposition}\label{l_W-top}
For any topological space $M$,
 $\U(h)$ $(\U\in\cov(M))$ is a neighborhood basis
 of $h \in \HH(M)$ in the graph topology.
\end{proposition}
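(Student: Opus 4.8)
The plan is to prove directly that the two families of sets generate the same neighborhood filter at $h$, by comparing $\U(h)$ with the basic open sets of the graph topology. First I would record the elementary fact that the generating family $\{\Gamma_U\}$ is closed under finite intersections, since $\Gamma_{U_1}\cap\Gamma_{U_2}=\Gamma_{U_1\cap U_2}$; together with $\Gamma_{M\times M}=C(M,M)$ this shows the sets $\Gamma_U\cap\HH(M)$ form an honest basis for the graph topology on $\HH(M)$. Consequently it suffices to prove two things: (a) every $\U(h)$ with $\U\in\cov(M)$ contains a basic graph-open set through $h$, so that $\U(h)$ is a graph-neighborhood of $h$; and (b) every basic graph-open set $\Gamma_U\cap\HH(M)$ containing $h$ contains some $\V(h)$ with $\V\in\cov(M)$.

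For (a), given a cover $\U$ I would set $U=\bigcup_{V\in\U}\big(h^{-1}(V)\times V\big)$, which is open in $M\times M$. Since $\U$ covers $M$, for each $x$ some $V\in\U$ contains $h(x)$, whence $(x,h(x))\in h^{-1}(V)\times V$; thus $\Gamma_h\subset U$ and $h\in\Gamma_U$. Conversely, if $g\in\HH(M)$ satisfies $\Gamma_g\subset U$, then for every $x$ there is $V\in\U$ with $(x,g(x))\in h^{-1}(V)\times V$, i.e.\ $h(x)\in V$ and $g(x)\in V$; this is exactly $(h,g)\prec\U$. Hence $\Gamma_U\cap\HH(M)\subset\U(h)$, proving (a).

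Direction (b) is where the hypothesis that $h$ is a homeomorphism is essential — in a general space $C(X,Y)$ this implication fails, because the graph condition $(x,g(x))\in U$ constrains the domain variable $x$, whereas $(h,g)\prec\V$ constrains only target values. I would convert the domain constraint into a target constraint using the bijectivity and openness of $h$. Given $U$ open with $\Gamma_h\subset U$, for each $x\in M$ choose open sets $A_x\ni x$ and $B_x\ni h(x)$ with $A_x\times B_x\subset U$, and put $W_x=B_x\cap h(A_x)$, an open neighborhood of $h(x)$ (here $h(A_x)$ is open because $h$ is an open map). Since $h$ is surjective, $\V=\{W_x:x\in M\}$ is an open cover of $M$. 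Now if $g\in\V(h)$, then for any $z$ there is $x$ with $h(z),g(z)\in W_x$; from $h(z)\in h(A_x)$ and the injectivity of $h$ I get $z\in A_x$, while $g(z)\in B_x$, so $(z,g(z))\in A_x\times B_x\subset U$. Thus $\Gamma_g\subset U$, giving $\V(h)\subset\Gamma_U\cap\HH(M)$, which is (b).

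The main obstacle is precisely step (b): the naive attempt to read a cover directly off the sets $B_x$ fails, and one must build into each cover member $W_x$ the ``memory'' $h(A_x)$ of the domain neighborhood, so that the condition $h(z)\in W_x$ forces $z\in A_x$. This single use of $h^{-1}$ (injectivity together with openness) is exactly what distinguishes $\HH(M)$ from a general mapping space and makes the two topologies coincide at $h$.
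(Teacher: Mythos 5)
Your proof is correct and follows essentially the same route as the paper's: your open set $\bigcup_{V\in\U}\bigl(h^{-1}(V)\times V\bigr)$ is the paper's $W$ in one direction, and your cover $\{B_x\cap h(A_x)\}_{x\in M}$ is precisely the paper's cover $\{h(U_x)\}_{x\in M}$ with $U_x=A_x\cap h^{-1}(B_x)$ in the other, with the same use of injectivity to convert the target condition $h(z)\in h(A_x)$ into the domain condition $z\in A_x$. The only addition is your (correct) preliminary remark that the sets $\Gamma_U$ already form a basis, which the paper leaves implicit.
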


\begin{proof}
Fix $h \in \HH(M)$. (1) Let $W \subset M^2$ be an open set
 such that $\Gamma_h \subset W$.
Each $x \in M$ has an open neighborhood $U_x$ in $M$
 such that $U_x \times h(U_x) \subset W$.
Since $h$ is a homeomorphism,
 $\U= \{h(U_x) : x \in M\} \in \cov(M)$.
To see $\U(h) \subset \Gamma_W$,
 take any homeomorphism $g \in \U(h)$.
For every point $z \in M$, there exists $y\in M$ such that
 $\{h(z),g(z)\}\subset h(U_y)$.
Since $h$ is a bijection, we have $z\in U_y$, hence
 $(z,g(z)) \in U_y \times h(U_y) \subset W$.
This means that $\Gamma_g \subset W$ and hence $\U(h) \subset \Gamma_W$.

(2) Take any cover $\U \in \cov(M)$.
For each $x \in M$,
 choose $U_x \in \U$ with $h(x)\in U_x$. Then  
$$W=\bigcup_{x\in M} h^{-1}(U_x) \times U_x \subset M \times M$$
 is an open neighborhood of $\Gamma_h$ in $M \times M$. 
To see $\Gamma_W \subset \U(h)$, take any $g \in \Gamma_W$.
Since $\Gamma_g \subset W$, for any $y\in M$ 
we can find $x\in M$ such that
 $(y,g(y)) \in h^{-1}(U_x) \times U_x$ and hence $\{h(y),g(y)\}\subset U_x$.
This means that $g \in \U(h)$. 
\end{proof}

For subspaces $K \subset L \subset M$, 
 let $\EE_K(L,M)$ denote the space of embeddings $f : L \to M$
 with $f|_K = \id_K$ endowed with the compact-open topology. 
In comparison with the Whitney topology (when $M$ is Hausdorff),
 each $f \in \EE_K(L,M)$ admits the fundamental neighborhood system: 
\[ 
{\mathcal U}(f, C) = \{ g \in \EE_K(L,M) :
 (f|_C, g|_C) \prec \U \} 
\hspace{3mm} \mbox{($C$ is a compact subset of $L$,\ ${\mathcal U} \in \cov(M)$).} \]  
The group $\HH(M)$ acts on $\EE(L,M)$ by the left composition. 
When $M$ is paracompact,
 every open cover of $M$ admits a star-refinement.
This remark leads to the following basic fact.

\begin{proposition}\label{W-top-group} 
If $M$ is paracompact,\footnote
	{It is proved in \cite{Gau}	that
	$\HH(X)$ is a topological group if $X$ is metrizable.}
 then {\rm (i)} $\HH(M)$ is a topological group and 
{\rm (ii)} the natural action of $\HH(M)$ on $\EE(L,M)$ is continuous. 
\end{proposition}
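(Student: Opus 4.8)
The plan is to verify the two defining continuity properties of a topological group separately — joint continuity of the multiplication $\HH(M)\times\HH(M)\to\HH(M)$ and continuity of the inversion $\HH(M)\to\HH(M)$ — and then to handle the action in (ii) by essentially repeating the multiplication argument, localized to a compact set. The only nontrivial external input is the fact recalled just above: since $M$ is paracompact, every open cover $\U\in\cov(M)$ admits an open star-refinement $\U_1$, meaning that for each $U_1\in\U_1$ the star $\St(U_1,\U_1)$ is contained in some member of $\U$. Throughout I would fix a target point and produce a basic product neighborhood mapping into the prescribed basic neighborhood.

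Inversion is the easier half and needs no star-refinement, only a change of variables through the fixed homeomorphism. Fixing $f_0\in\HH(M)$ and a basic neighborhood $\U(f_0^{-1})$, I would take the open cover $\W=\{f_0(U):U\in\U\}$ and claim that $f\in\W(f_0)$ forces $f^{-1}\in\U(f_0^{-1})$. Indeed, given $x\in M$ write $x=f(z)$; then $(f_0,f)\prec\W$ applied at $z$ puts $f(z)$ and $f_0(z)$ in a common $f_0(U)$, so that $f^{-1}(x)=z$ and $f_0^{-1}(x)=f_0^{-1}(f(z))$ both lie in $U$. As $f$ is a bijection, $x=f(z)$ ranges over all of $M$, so this yields $(f_0^{-1},f^{-1})\prec\U$.

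For multiplication the crux is to \emph{combine two nearness relations through one star-refinement}. Fix $(f_0,g_0)$ and a basic neighborhood $\U(f_0g_0)$, and let $\U_1$ be an open star-refinement of $\U$. I would impose $\U_1(f_0)$ on the outer factor and, using that $f_0$ is a homeomorphism, the cover $\W=\{f_0^{-1}(U_1):U_1\in\U_1\}$ on the inner factor, giving the neighborhood $\W(g_0)$. Then for $f\in\U_1(f_0)$, $g\in\W(g_0)$, and any $x\in M$, the intermediate point $b=f_0(g(x))$ lies in a common $\U_1$-member $P$ with $a=f(g(x))$ (from $(f_0,f)\prec\U_1$ at $g(x)$) and in a common $\U_1$-member $Q$ with $c=f_0(g_0(x))$ (from $(g_0,g)\prec\W$, pushed forward by $f_0$). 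Since $b\in P\cap Q$, both $P$ and $Q$ lie in $\St(P,\U_1)$, which is contained in some $U\in\U$; hence $a,c\in U$ and $(f_0g_0,fg)\prec\U$. This establishes joint continuity of multiplication and, together with the inversion step, proves (i).

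Finally, for (ii) I would run the multiplication computation verbatim but restricted to a compact set. Given a basic neighborhood $\U(h_0f_0,C)$ of $h_0\circ f_0$ in $\EE(L,M)$, I would take a star-refinement $\U_1$ of $\U$, impose $\U_1(h_0)$ on the $\HH(M)$-factor and $\W(f_0,C)$ with $\W=\{h_0^{-1}(U_1):U_1\in\U_1\}$ on the $\EE(L,M)$-factor, and carry out the same star argument at each $x\in C$, with intermediate point $h_0(f(x))$. The main obstacle — indeed the only genuinely delicate point — is exactly this star-refinement bookkeeping: checking that the intermediate point $f_0(g(x))$ (respectively $h_0(f(x))$) lands simultaneously in both refining members, so that their shared star captures the two endpoints inside a single member of $\U$. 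All remaining manipulations are routine changes of variables exploiting that $f_0$ (respectively $h_0$) is a homeomorphism.
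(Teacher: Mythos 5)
Your proposal is correct and follows essentially the same route as the paper: inversion by pulling the cover back through the fixed homeomorphism, composition (and the action on $\EE(L,M)$) by taking a star-refinement, giving the outer factor the refining cover and the inner factor its preimage under the fixed outer map, and then trapping the old and new composites in a single member of $\U$ via the common intermediate point. The only difference is that you spell out the inversion and the assertion (ii) in more detail than the paper, which dispatches them with one-line remarks.
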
 

\begin{proof}
For the sake of completeness we include the proof.

(i) It follows from the definition of the Whitney topology on $\HH(M)$
 that the inversion $f\mapsto f^{-1}$ is continuous.
So, it remains to check that
 the composition is continuous with respect to the Whitney topology.
Given 
$f,g\in\HH(M)$ and $\U \in \cov(M)$, 
 we should find $\V, \W \in \cov(M)$ such that
 $f'g' \in \U(fg)$ for every $f' \in \V(f)$ and $g' \in \W(g)$,
 that is, $(f,f') \prec \V$ and $(g,g') \prec \W$ imply $(fg,f'g') \prec \U$.
By the paracompactness of $M$,
 there is a cover $\V\in\cov(M)$ with $\St(\V)\prec\U$.
Let $\W=f^{-1}(\V)=\{f^{-1}(V):V\in\V\}$ and
 assume $(f,f') \prec \V$ and $(g,g') \prec \W$.
Since $(fg,fg') \prec f(\W) = \V$ and $(fg',f'g') \prec \V$,
 it follows that $(fg,f'g') \prec \St(\V) \prec \U$.

The assertion (ii) can be seen by the same argument. 
\end{proof}

The next proposition is the main result in this section. 
For a subset $L \subset M$, let $\HH(M, L) = \{ h \in \HH(M) : h|_L = \id \}$.

\begin{proposition}\label{compsupp}
If $M$ is paracompact then {\rm (1)} $\HH_0(M) \subset \HH_c(M)$ and 
{\rm (2)} every compact subspace $\K\subset \HH_c(M)$
 is contained in $\HH(M, M \setminus K)$
 for some compact subset $K \subset M$.
\end{proposition}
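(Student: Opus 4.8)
The plan is to prove both parts from a single combinatorial device: given a homeomorphism (or a sequence of them) that moves points arbitrarily far out toward the ends of $M$, one builds a \emph{single} open cover $\U\in\cov(M)$ that is so fine along a chosen escaping sequence of moved points that no compactly supported homeomorphism can be $\U$-near it. I will treat (2) first and then deduce (1), reading $\HH_0(M)$ as the identity isotopy class, i.e.\ the path component of $\id_M$ (the meaning forced by $\M_c(M)=\HH_c(M)/\HH_0(M)$ being the mapping class group). Throughout I use that $M$, being a manifold, is locally compact and $\sigma$-compact, fix an exhaustion $K_1\subset\tint K_2\subset K_2\subset\cdots$ with $\bigcup_nK_n=M$, and use paracompactness only through Proposition~\ref{W-top-group} so that $\HH(M)$ is a topological group.

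For (2), suppose a compact $\K\subset\HH_c(M)$ had supports that are not uniformly bounded, i.e.\ for every $n$ some $h\in\K$ has $\supp(h)\not\subset K_n$. Since $\supp(h)=\cl\{h\neq\id\}$ meets the open set $M\setminus K_n$, I can choose $h_n\in\K$ and points $x_n\notin K_n$ with $h_n(x_n)\neq x_n$; after passing to a subsequence the $x_n$ escape every compactum, so $\{x_n\}$ is closed and discrete. Using local compactness I pick pairwise disjoint open sets $U_n\ni x_n$ forming a locally finite family, small enough that $h_n(x_n)\notin U_n$, and set $\U=\{M\setminus\{x_k:k\in\IN\}\}\cup\{U_n:n\in\IN\}$. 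For any $h\in\HH_c(M)$ and all large $n$ one has $x_n\notin\supp(h)$, so $h(x_n)=x_n\neq h_n(x_n)$, and no member of $\U$ contains both $x_n$ and $h_n(x_n)$; hence $(h,h_n)\not\prec\U$, i.e.\ $h_n\notin\U(h)$. Thus $\U(h)$ contains only finitely many $h_n$, so no $h\in\HH_c(M)$ is a cluster point of $(h_n)$. Since $\K$ is compact, hence countably compact, $(h_n)$ must cluster at a point of $\K\subset\HH_c(M)$ — a contradiction. Therefore all supports lie in a common $K_N$ and $\K\subset\HH(M,M\setminus K_N)$.

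For (1) the heart is the following key lemma: no sequence $(g_k)$ with every $g_k\notin\HH_c(M)$ can converge to $\id_M$ in the Whitney topology. Each such $g_k$ has an unbounded moving set, so I run the same construction diagonally: choosing a partition $\IN=\bigsqcup_kS_k$ into infinite sets and points $y_m\to\infty$ $(m\in S_k)$ with $g_k(y_m)\neq y_m$, together with disjoint locally finite open $U_m\ni y_m$ avoiding $g_k(y_m)$, the cover $\U=\{M\setminus\{y_m\}\}\cup\{U_m\}$ satisfies $g_k\notin\U(\id_M)$ for every $k$ (witnessed by any $y_m$ with $m\in S_k$), so $(g_k)\not\to\id_M$. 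Granting this, let $\gamma\colon[0,1]\to\HH(M)$ be a path with $\gamma(0)=\id_M$ and put $A=\{t:\gamma(t)\in\HH_c(M)\}$. Using that $\HH(M)$ is a topological group I reduce to the lemma by left translation: if $t_k\to t_0$ with $\gamma(t_k)$ on one side of $\HH_c(M)$ and $\gamma(t_0)$ on the other, then $\beta_k=\gamma(t_0)^{-1}\gamma(t_k)\to\id_M$ while every $\beta_k\notin\HH_c(M)$ (a compactly supported factor cannot cancel a non-compactly supported one off a compactum), contradicting the lemma; hence $A$ is clopen in the connected interval $[0,1]$, so $A=[0,1]$ and $\gamma([0,1])\subset\HH_c(M)$. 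Finally $\gamma([0,1])$ is compact in $\HH_c(M)$, so (2) bounds all its supports in one compactum; in particular $\gamma(1)\in\HH_c(M)$, which proves $\HH_0(M)\subset\HH_c(M)$.

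The step I expect to be the crux is the cover construction — especially its diagonal form in the key lemma — where I must simultaneously separate $x$ from $h(x)$ along a whole escaping family while keeping the expanding sets disjoint and locally finite; this is exactly where local compactness and $\sigma$-compactness of the manifold are indispensable (paracompactness alone, which supplies only collectionwise-normal disjoint expansions, would not by itself produce the escaping sequence). A secondary point to flag is the reading of $\HH_0(M)$: the argument delivers that the \emph{path} component of $\id_M$ lies in $\HH_c(M)$, and to identify this with the connected identity component one invokes the local path-connectedness of $\HH_c(M)$ coming from Proposition~\ref{prop_Homeo_LC}.
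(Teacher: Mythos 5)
Your part (2) is essentially the paper's own argument: the same escaping sequence $x_n$ with $h_n(x_n)\neq x_n$, the same cover $\U=\{M\setminus\{x_k\}_k\}\cup\{U_n\}_n$, and the same cluster-point contradiction. The one difference is that you extract the escaping sequence from a compact exhaustion, which uses local compactness and $\sigma$-compactness; the proposition is stated for arbitrary paracompact $M$, and the paper instead gets the sequence from an unbounded continuous function $\xi$ supplied by the non-pseudocompactness of the (paracompact, non-compact) support. So your (2) is correct but proves a slightly less general statement than the one asserted.

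The genuine gap is in part (1). $\HH_0(M)$ is the \emph{connected} component of $\id_M$ in $\HH(M)$, and your argument establishes only that the \emph{path} component of $\id_M$ lies in $\HH_c(M)$. The proposed repair --- invoking the local path-connectedness of $\HH_c(M)$ from Proposition~\ref{prop_Homeo_LC} --- does not close this gap: that proposition concerns $\HH_c(M)$ in its subspace topology, whereas to identify the component of $\id_M$ in $\HH(M)$ with its path component you would need $\HH(M)$ to be locally (path-)connected near $\id_M$, or at least $\HH_c(M)$ to be open in $\HH(M)$. Neither holds: $\HH(M)$ is locally modelled on box products, which are not locally connected, and a Whitney-small perturbation of $\id_M$ can easily have non-compact support (e.g.\ on $\IR$, a sum of ever-smaller bumps near the integers), so $\HH_c(M)$ is not open. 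Your key lemma is a purely sequential statement, and $\HH(M)$ is not sequential, so it cannot be upgraded to openness either; nothing in your argument rules out a connected (non-path-connected) subset joining $\id_M$ to a homeomorphism with non-compact support. The paper's proof does something stronger: for each $f\notin\HH_c(M)$ it builds a continuous map $\varphi:\HH(M)\to\square^\w\IR$, $\varphi(h)=\bigl(\lambda(h(x_n))\bigr)_{n}$, with $\lambda$ a Urysohn function separating an escaping set $X=\{x_n\}$ from $f(X)$, and pulls back the clopen subgroup $c_0$ to get a \emph{clopen} set containing $\id_M$ but not $f$. This controls the quasi-component of $\id_M$, hence its connected component, which is exactly the step your proof is missing.
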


\begin{proof}
(1) It suffices to show that each $f \in \HH(M) \setminus \HH_c(M)$
 can be separated from $\id_M$ by a clopen subset $\U$ of $\HH(M)$.
For this purpose, we compare the space $\HH(M)$
 with the additive group $\square^\omega \IR$.
The latter space contains the clopen subgroup
$$c_0 = \left\{(a_n)_{n\in\w} \in \square^\omega \IR 
: \lim_{n\to\infty} a_n = 0 \right\}.$$

For any $f \in \HH(M) \setminus \HH_c(M)$,
 we can find a countable discrete subset $X = \{x_n\}_{n\in\w}$ of $M$
 such that $f(X) \cap X = \emptyset$.
Indeed, the set $F=\supp(f)$ is non-compact and closed in $M$,
 whence $F$ is paracompact. 
Since the compactness coincides with the pseudocompactness
 in the class of paracompact spaces (see \cite[3.10.21, 5.1.20]{En}),
 $F$ is not pseudocompact and hence admits a continuous unbounded function
 which extends to a continuous function $\xi:M\to[0,+\infty)$
 by the normality of $M$.
Since $\xi|_F$ is unbounded,
 we can choose a countable subset $X = \{x_n\}_{n\in\w}$ in $F$
 so that for each $n \in \w$, \ $f(x_n) \not= x_n, \ \xi(x_n) > n$ \ and 
$$\xi(x_n) >\max\big\{\xi(x_i),\ \xi(f(x_i)),\ \xi(f^{-1}(x_i)) : i < n \big\}.$$ 
Then $f(X) \cap X = \emptyset$ and $\lim_{n\to\infty}\xi(x_n)=\infty$.

By the normality of $M$,
 there exists a Urysohn map $\lambda : M \to [0,1]$ 
 with $\lambda(X) = 0$ and $\lambda(f(X)) = 1$.
Since $h(X)$ is discrete for each $h \in \HH(M)$,
 we have the map $\varphi : \HH(M) \to \square^\omega \IR$
 defined by $\varphi(h) = \big( \lambda(h(x_n)) \big)_{n\in\w}$.  
Since $\varphi(\id_M) = (0,0,\dots)$ and $\varphi(f) = (1,1,\dots)$, 
it follows that $\U \equiv \varphi^{-1}(c_0)$
 is a clopen neighborhood of $\id_M$ with $f \not\in \U$.
\smallskip

(2) Given a compact subset $\K\subset\HH_c(M)$ we shall show that
 $\supp(\K) = \cl_M \big(\bigcup_{h\in\K}\supp(h)\big)$ is compact. 
Assume conversely that the set $\supp(\K)$ is not compact. 
The same argument as in (i) yields a continuous function $\xi:M\to[0,\infty)$
 whose restriction $\xi|\supp(\K)$ is unbounded.
By induction we can choose sequences of points $x_n \in\supp(\K)$
 and of homeomorphisms $h_n \in \K$  ($n \in \w$) such that \ 
$h_n(x_n) \neq x_n$, \ $\xi(x_n) > n$ \ and 
$$\xi(x_n) > \max \xi\big(\textstyle \bigcup_{i<n}\supp(h_i)\big).$$
It is seen that 
 $x_n \in \supp(h_n) \setminus \bigcup_{i<n}\supp(h_i)$
 and so $x_n \neq x_m$ if $n \neq m$. 

By the compactness of $\K$,
 the sequence $(h_i)_{i\in\IN}$ has a cluster point
 $h_\infty \in \K \subset \HH_c(M)$.
Note that $U_0 = M \setminus \{x_n\}_{n\in\w}$ is open in $M$.
For each $n \in \w$,
 take a small open neighborhood $U_n$ of $x_n$ in $M$
 such that $h_n(x_n) \not\in U_n$ and $U_n \cap U_m = \emptyset$ if $n \neq m$.
Then we have $\U = \{ U_n \}_{n\in\w} \in \cov(M)$.
The neighborhood $\U(h_\infty)$ of $h_\infty$
 contains only finitely many $h_n$ because
 $h_\infty(x_n) = x_n \in U_n$ for sufficiently large $n \in \w$,
 but $h_n(x_n) \not\in U_n$ for each $n \in \w$.
This contradicts the choice of $h_\infty$
 as a cluster point of $(h_n)_{n\in\w}$.
\end{proof}

Considering Conjecture in Introduction,
 we are concerned with the paracompactness of the space $\HH_c(M)$.

\begin{proposition}\label{para-H_c}
For a locally compact separable metrizable space $M$,
 the space $\HH_c(M)$ is (strongly) paracompact.
\end{proposition}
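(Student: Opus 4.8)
The plan is to obtain strong paracompactness as a \emph{soft} consequence of the Lindel\"of property, avoiding the box-product analysis used for the main theorems. Since a locally compact separable metrizable space is $\sigma$-compact, I would first fix a compact exhaustion $K_1\subset\tint K_2\subset K_2\subset\tint K_3\subset\cdots$ with $M=\bigcup_{n\in\w}K_n$. Put $G_n=\HH(M,M\setminus\tint K_n)$, the subgroup of homeomorphisms that are the identity off $\tint K_n$; then $\supp(h)\subset K_n$ for $h\in G_n$, the $G_n$ form an increasing chain, and since every $h\in\HH_c(M)$ has compact support contained in some $\tint K_N$, we get $\HH_c(M)=\bigcup_{n\in\w}G_n$.

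The technical heart is to show that each $G_n$, in the topology inherited from $\HH_c(M)$, is separable metrizable and hence Lindel\"of. For $g\in G_n$ one has $g|_{M\setminus\tint K_n}=\id$, so in a basic Whitney neighborhood the relation $(h,g)\prec\U$ is automatically fulfilled off $\tint K_n$ and only constrains the behaviour on the compact set $K_n$; on a compact set the graph topology coincides with the compact-open topology. Thus the subspace topology on $G_n$ is exactly the compact-open topology, and $g\mapsto(g|_{K_n},g^{-1}|_{K_n})$ embeds $G_n$ into $C(K_n,M)\times C(K_n,M)$. As $K_n$ is compact metrizable and $M$ is separable metrizable, $C(K_n,M)$ is separable metrizable in the compact-open topology, whence $G_n$ is separable metrizable and therefore Lindel\"of.

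It remains to assemble the pieces. A countable union of Lindel\"of subspaces is Lindel\"of: any open cover of $\HH_c(M)$ restricts on each $G_n$ to a cover admitting a countable subfamily covering $G_n$, and the union of these countable families is a countable subcover of $\bigcup_n G_n=\HH_c(M)$. Moreover $\HH_c(M)$, being a subgroup of the topological group $\HH(M)$ (Proposition~\ref{W-top-group}), is itself a topological group and so is regular. I would then invoke the classical theorem that every regular Lindel\"of space is strongly paracompact (see \cite{En}) to finish. The \textbf{main obstacle} is the middle step: one must check with care that the Whitney subspace topology on $G_n$ really is the compact-open topology and that the latter is separable metrizable. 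Once this metrizability of the exhausting subgroups is established, the Lindel\"of reduction together with the regular-Lindel\"of criterion yields strong paracompactness with no appeal to the fine box structure of $\HH_c(M)$.
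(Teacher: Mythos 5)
Your proof is correct and follows essentially the same route as the paper: write $\HH_c(M)=\bigcup_n\HH(M,M\setminus\tint M_n)$, observe that each of these subgroups is separable metrizable (hence Lindel\"of) because its Whitney subspace topology reduces to the compact-open topology on the compact piece, conclude that $\HH_c(M)$ is Lindel\"of as a countable union of Lindel\"of subspaces, and invoke the regular-Lindel\"of criterion for (strong) paracompactness. The only difference is that you spell out the separable metrizability of the exhausting subgroups via the embedding into $C(K_n,M)\times C(K_n,M)$, which the paper leaves implicit in the identification $\HH(M,M\setminus\tint M_n)\approx\HH(M_n,\bd_M M_n)$.
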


\begin{proof}
We can write $M = \bigcup_{n\in\IN} M_n$,
 where each $M_n$ is compact and $M_n \subset \tint_M M_{n+1}$.
Then $\HH_c(M) = \bigcup_{n\in\IN} \HH(M,M \setminus \tint M_n)$.
For each $n \in \IN$,
 $\HH(M,M \setminus \tint M_n) \approx \HH(M_n,\bd_M M_n)$
 is separable metrizable, hence Lindel{\"o}f.
Therefore, $\HH_c(M)$ is Lindel{\"o}f by Theorem 3.8.5 in \cite{En},
 so it is (strongly) paracompact by Theorem 5.1.2 (Corollary 5.3.11)
 in \cite{En}.
\end{proof}

This proposition has a further refinement.
Following 
E.~Michael \cite{Mi} (see also \cite{Gru})
 we define a regular topological space $X$ to be an {\em $\aleph_0$-space}
 if $X$ possesses a countable family $\mathcal N$ of subsets of $X$
 such that for each open subset $U$ of $X$ and a compact subset $K$ in $U$
 there is a finite subfamily $\mathcal F$ of $\mathcal N$
 with $K\subset \bigcup\mathcal F\subset U$.
Such a family $\mathcal N$ is called a {\em $k$-network} for $X$.
It is clear that each $\aleph_0$-space has countable network weight
 (cf.\ \cite[p.127]{En})
 and hence is Lindel\"of \cite[Theorem 3.8.12]{En}.

\begin{proposition}\label{prop_aleph_0}
For a locally compact separable metrizable space $M$,
 the space $\HH_c(M)$ is an $\aleph_0$-space.
\end{proposition}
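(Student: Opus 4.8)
The plan is to refine the Lindel\"of argument of Proposition~\ref{para-H_c} into an explicit construction of a countable $k$-network. Recall from the proof of that proposition the exhaustion $M = \bigcup_{n\in\IN} M_n$ by compact sets with $M_n \subset \tint_M M_{n+1}$, which yields $\HH_c(M) = \bigcup_{n\in\IN} G_n$, where $G_n = \HH(M, M\setminus\tint M_n) \approx \HH(M_n, \bd_M M_n)$ is separable metrizable. The guiding idea is that each $G_n$, being separable metrizable, carries a countable $k$-network, while Proposition~\ref{compsupp}\,(2) forces every compact subset of $\HH_c(M)$ into a single $G_n$. Amalgamating the networks of the individual $G_n$ should then produce a countable $k$-network for the whole group.

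To carry this out I would first record that $\HH_c(M)$ is regular: it is a Hausdorff topological group by Proposition~\ref{W-top-group} (a locally compact separable metrizable $M$ being paracompact), hence completely regular. Then, for each $n\in\IN$, I would fix a countable base $\mathcal B_n$ of the separable metrizable space $G_n$, noting that any countable base is automatically a $k$-network: if $K$ is compact and contained in an open $V\subset G_n$, one covers $K$ by finitely many members of $\mathcal B_n$ lying inside $V$, and their union lies in $V$. Setting $\N = \bigcup_{n\in\IN}\mathcal B_n$ gives a countable family of subsets of $\HH_c(M)$, which is the candidate $k$-network.

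It then remains to verify the $k$-network condition for $\N$. Given a compact $K$ and an open $U$ with $K\subset U\subset\HH_c(M)$, Proposition~\ref{compsupp}\,(2) furnishes a compact $L\subset M$ with $K\subset\HH(M,M\setminus L)$; enlarging $L$ to some $M_n\supset L$ gives $K\subset G_n$. Since $U\cap G_n$ is open in $G_n$ and contains $K$, the $k$-network $\mathcal B_n$ produces a finite $\F\subset\mathcal B_n\subset\N$ with $K\subset\bigcup\F\subset U\cap G_n\subset U$, which is exactly what is required. The one point demanding care, and the real crux of the argument, is precisely this absorption of compact sets into a single $G_n$: it rests on Proposition~\ref{compsupp}\,(2) together with the fact that the Whitney topology restricts on each $G_n$ to the compact-open topology, so that the base $\mathcal B_n$ computed in $\HH(M_n,\bd_M M_n)$ genuinely matches the subspace topology inherited from $\HH_c(M)$. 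Once these two facts are in hand, the combinatorics of the $k$-network transfer verbatim and the proof concludes.
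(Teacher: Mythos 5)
Your proposal is correct and follows essentially the same route as the paper's own proof: take countable bases $\mathcal B_n$ of the separable metrizable groups $\HH(M,M\setminus\tint M_n)$, let their union be the candidate $k$-network, and use Proposition~\ref{compsupp}\,(2) to absorb any compact set into a single $\HH(M,M\setminus\tint M_n)$ before extracting a finite subcover from $\mathcal B_n$. The extra remarks on regularity and on a countable base being a $k$-network are fine but add nothing beyond what the paper's argument already contains.
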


\begin{proof}
In the proof of Proposition~\ref{para-H_c},
 each $\HH(M,M \setminus \tint M_n)$  is separable metrizable
 and hence its topology has countable base $\mathcal B_n$.
We claim that the union $\mathcal B = \bigcup_{n\in\IN}\mathcal B_n$
 is a countable $k$-network for $\HH_c(M)$.
Indeed, given an open set $U\subset\HH_c(M)$ and a compact subset $K\subset U$,
 we can apply Proposition~\ref{compsupp}\,(2) in order to find $n\in\IN$
 such that $K\subset \HH(M,M\setminus\tint M_n)$.
The compactness of $K$ allows us to find a finite subfamily $\F$
 of the base $\mathcal B_n$ such that  $K\subset \bigcup\F\subset U$.
\end{proof}

%%%%%%%%%%%%%%%%%%%%%%%%%%%%%%%%%%%

\section{Transformation groups}
\label{Trans-gr}

\subsection{Generalities on transformation groups}
\label{Gen-trans-gr} \mbox{}

Throughout the article, a {\em transformation group} $G$ on a space $M$
 means a topological group $G$ acting on $M$ continuously and effectively.
Each $g \in G$ induces a homeomorphism of $M$,
 which is also denoted by the same symbol $g$.
This determines the canonical injection $G \hookrightarrow \HH(M)$.
Let $G_0$ denote the connected component of the identity element $e$ in $G$
 and let $G_c = \{g \in G : \supp(g) \text{ is compact}\}$. 
For any subsets $K, N$ of $M$, 
we obtain the following subgroups of $G$:
$$G_K = \{g \in G : g|_K = \id_K \}, \quad G(N) = G_{M \setminus N}, \quad 
G_K(N) = G_K \cap G(N), \quad G_{K, c} = G_K \cap G_c.$$  

\begin{proposition}\label{prop_tr-gr_compsupp} 
Suppose the natural injection $G \to \HH(M)$ is continuous.
\begin{itemize}
\item[(1)]
 If $M$ is paracompact, then $G_0 \subset G_c$ and
 every compact subspace $\K \subset G_c$ is contained in $G(K)$
 for some compact subset $K \subset M$.
\item[(2)]
 If $M$ is locally compact and $\sigma$-compact and
 $G(K)$ is second countable for every compact subset $K \subset M$,
 then $G_c$ is an $\aleph_0$-space \textup{(}hence \textup{(}strongly\textup{)} paracompact\textup{)}. 
\end{itemize}
\end{proposition}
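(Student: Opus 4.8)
The plan is to reduce both assertions to the corresponding statements for the full homeomorphism group $\HH(M)$, which were established in Propositions~\ref{compsupp} and~\ref{prop_aleph_0}, using the continuity of the canonical injection $\iota : G \to \HH(M)$.

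For assertion~(1), I would first observe that since $\iota$ is a continuous group homomorphism, it maps the identity component $G_0$ into the identity component $\HH_0(M)$ of $\HH(M)$. By Proposition~\ref{compsupp}\,(1), $\HH_0(M) \subset \HH_c(M)$, and since the support of $g$ computed in $G$ equals the support of $\iota(g)$ computed in $\HH(M)$, it follows that every $g \in G_0$ has compact support, i.e.\ $G_0 \subset G_c$. For the second part, let $\K \subset G_c$ be compact. Then $\iota(\K)$ is a compact subspace of $\HH_c(M)$, so by Proposition~\ref{compsupp}\,(2) there is a compact set $K \subset M$ with $\iota(\K) \subset \HH(M, M \setminus K)$. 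This means every $h \in \K$ satisfies $h|_{M \setminus K} = \id$, whence $\supp(h) \subset K$ and $\K \subset G(K)$, as required. I expect this part to be essentially formal.

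For assertion~(2), the strategy is to decompose $G_c$ as a countable union of second-countable subgroups and then invoke the $\aleph_0$-space machinery from Section~\ref{Basic}. Since $M$ is locally compact and $\sigma$-compact, I would write $M = \bigcup_{n} M_n$ with each $M_n$ compact and $M_n \subset \tint_M M_{n+1}$, exactly as in the proof of Proposition~\ref{para-H_c}. Part~(1) then gives $G_c = \bigcup_n G(M_n)$, since any element of $G_c$ has support inside some compact set, hence inside some $M_n$. By hypothesis each $G(M_n)$ is second countable, so its topology has a countable base $\mathcal B_n$. I would then claim that $\mathcal B = \bigcup_n \mathcal B_n$ is a countable $k$-network for $G_c$: given an open $U \subset G_c$ and a compact $K \subset U$, apply part~(1) to locate $n$ with $K \subset G(M_n)$, and then use compactness of $K$ to extract a finite subfamily $\F$ of $\mathcal B_n$ with $K \subset \bigcup \F \subset U$. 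This shows $G_c$ is an $\aleph_0$-space; it is then Lindel\"of and hence (strongly) paracompact by the standard results cited earlier (Theorems~5.1.2 and~5.3.11 in \cite{En}).

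The main obstacle, and the point requiring the most care, is verifying that the subspace topology on each $G(M_n)$ inherited from $G_c$ genuinely agrees with a second-countable topology for which $\mathcal B_n$ is a base, and that the union $\bigcup_n G(M_n)$ carries the correct colimit-type topology so that the $k$-network argument goes through cleanly. One must confirm that a compact set $K \subset G_c$ sitting inside $G(M_n)$ is compact in $G(M_n)$ with its own topology (which follows because $G(M_n)$ is a topological subgroup and the inclusion is an embedding), so that the chosen finite subcover drawn from $\mathcal B_n$ remains a cover of $K$ in $G_c$. This is exactly where the continuity of $\iota$ and the closedness properties of the $G(M_n)$ matter, and it parallels the corresponding verification in Proposition~\ref{prop_aleph_0}; I would model the argument directly on that proof.
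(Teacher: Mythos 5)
Your proposal is correct and follows essentially the same route as the paper: assertion (1) is obtained by pushing forward along the continuous injection $G \to \HH(M)$ and invoking Proposition~\ref{compsupp}, and assertion (2) is proved by repeating the $k$-network construction from the proof of Proposition~\ref{prop_aleph_0} with the subgroups $G(M_n)$ in place of $\HH(M, M\setminus \tint M_n)$. The only superfluous worry is the ``colimit-type topology'' on $\bigcup_n G(M_n)$: no such compatibility is needed, since a $k$-network need not consist of open sets and the hypothesis already provides a countable base for each $G(M_n)$ in its subspace topology.
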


\begin{proof}
The statement (1) follows immediately from Proposition~\ref{compsupp}, 
 while (2) can be deduced from (1)
 by analogy of the proof of Proposition~\ref{prop_aleph_0}.
\end{proof}

For any subgroup $H$ of $G$,
 there is a natural projection 
 $\pi : G \to G/H$.
The coset space $G/H=\{gH:g\in G\}$ is endowed with the quotient topology. 
The left coset $\pi(g) = gH \in G/H$ is also denoted by $\overline{g}$.
The symbols $H_0$ and $(G/H)_0$ denote the connected components  
of $e$ in $H$ and $\overline{e}$ in $G/H$ respectively. 

For each subset $L \subset M$,
 let $\EE^G(L,M)$ be the set of embeddings $g|_L : L \to M$
 induced by $g \in G$. 
The inclusion map $i_L : L \subset M$ is regarded
 as the distinguished point of this set. 
The group $G$ acts transitively on the set $\EE^G(L,M)$ by $g \cdot h = gh$. 
More generally, for subsets $K \subset L \subset N$ of $M$, 
we have the subset 
$$\EE_K^G(L, N) = \{ g|_L \in \EE(L,M) : g \in G_K(N) \} 
 =  G_K(N) \cdot i_L \subset \EE^G(L, M).$$ 
The subgroup $G_K(N)$ acts on this subset transitively
 and the subgroup $G_L(N)$ is the stabilizer of $i_L$ under this action.
Since for each $g, g' \in G_K(N)$,
$$g|L = g'|L \ \Longleftrightarrow \ g^{-1}g' \in G_L(N)
\  \Longleftrightarrow \ gG_L(N) = g'G_L(N),$$ 
 the restriction map $r : G_K(N) \to \EE_K^G(L, N)$, $r(g) = g|_L$, 
has the following factorization:
$$\xymatrix@M+1pt{
& G_K(N) \ar[dl]_{\mbox{$\pi$}} \ar[dr]^{\mbox{$r$}} & \\
G_K(N)/G_L(N) \ar[rr]^{\mbox{\ \ $\phi$}}_{\ \ \text{bij.}} & & \EE_K^G(L, N), 
}$$
where $\phi(\overline{g}) = g|_L = g \cdot i_L$. 
\noindent
The map $\phi$ is a $G_K(N)$-equivariant bijection. 
For $K \subset L_2 \subset L_1 \subset N$, we obtain 
the restriction map $\EE_K^G(L_1, N) \to \EE_K^G(L_2, N)$. 
Hereafter, in case $K = \emptyset$,
 the symbol $K$ is omitted from the notations. 

Here we include general remarks on topologies on the set $\EE^G_K(L, N)$. 

\begin{definition}\label{def_adm-top} 
A topology $\tau$ on the set $\EE^G_K(L, N)$ is said to be {\em admissible} 
 if the action of $G_K(N)$ on $(\EE_K^G(L, N), \tau)$ is continuous.
 The space $(\EE_K^G(L, N), \tau)$ is also denoted by $\EE_K^G(L, N)^\tau$. 
\end{definition}

Let $\widehat{\tau} = \ahtau{K}{L}{N}$ denote 
the quotient topology on $\EE^G_K(L, N)$ induced by the map $r$.
For simplicity, the space $(\EE^G_K(L, N), \widehat{\tau})$ is denoted by 
$\widehat{\EE}^G_K(L, N)$. 

\begin{remark}\label{rem_quotient-top} 
(i) The quotient topology $\widehat{\tau}$ is the strongest admissible topology on $\EE_K^G(L, N)$. 

(ii) The restriction map
 $\widehat{\EE}^G_{K_1}(L_1,N_1) 
 \to \widehat{\EE}^G_{K_2}(L_2,N_2)$
 is continuous for triples $(N_1, L_1, K_1)$ and $(N_2, L_2, K_2)$
 such that $N_1 \subset N_2$, $L_1 \supset L_2$ and $K_1 \supset K_2$.
\end{remark}

\begin{remark}\label{rem_adm-top} 
If the set $\EE^G_K(L, N)$ is equipped with an admissible topology $\tau$,  
then the following hold: 
\begin{itemize} 
\item[(i)\ ] The maps $r$ and $\phi$ are continuous. 
The map $\phi$ is a homeomorphism if and only if $\tau = \widehat{\tau}$. 

\item[(ii)\,] The map $r : G_K(N) \to \EE^G_K(L, N)^\tau$
 has a local section at $i_L$ if and only if
 $\tau = \widehat{\tau}$ and 
the map $$\pi : G_K(N) \to G_K(N)/G_L(N)$$ has a local section. 
In this case, the map $r$ is a principal $G_L(N)$-bundle. 
\end{itemize} 
\end{remark}
 
%%%%%%%%%%%%%%%%%%%%%%%%%%%

\subsection{Local section property}\label{LSP} \mbox{}

Throughout this subsection, we assume that  
$G$ is a transformation group on a space $M$.
Suppose $K \subset L \subset N$ are subsets of $M$. 

\begin{definition}\label{d_LSP} 
We say that the triple $(N, L, K)$ has
 the {\em local section property} for $G$ (abbrev.\ LSP$_G$)
 if the restriction map 
 $r : G_K(N) \to (\EE^G_K(L, M), \tau)$ has
 a local section $s$ at the inclusion $i_L : L \subset M$
 with respect to some admissible topology $\tau$.
(In this definition,
 one should not confuse $\EE^G_K(L, M)$ with $\EE^G_K(L, N)$.)
\end{definition} 

\begin{remark}\label{rem_LSP_G} {\rm (0)} 
The above map $s$ is also a local section of the restriction map $G_K \to \EE^G_K(L, M)^\tau$. 
Thus $\tau = \ahtau{K}{L}{M}$ by Remark~\ref{rem_adm-top}~\!(ii).
\smallskip 

(i) We can modify the local section $s$ so that $s(i_{L}) = \id_M$. 
\smallskip 

(ii) A triple $(N, L, K)$ has LSP$_G$ if and only if 
 for some admissible topology $\tau$
\begin{itemize} 
\item[(a)] $\EE^G_K(L,N)$ is open in
 $\EE^G_K(L,M)^\tau$ and 
\item[(b)] the map $r : G_K(N) \to \EE^G_K(L,N)^\tau$
 has a local section at $i_L$.
\end{itemize} 
\smallskip 

(iii) Suppose $(N_1, L, K_1)$ and $(N_2, L, K_2)$ are triples
 of subsets of $M$ such that $N_1 \subset N_2$ and $K_1 \subset K_2$. 
If $(N_1, L, K_1)$ has LSP$_G$, then so does $(N_2, L, K_2)$.
Indeed, the map 
$r_1 : G_{K_1}(N_1) \to \EE^G_{K_1}(L,M)^\tau$
 has a local section  
 $s : \U \to G_{K_1}(N_1)$
 at the inclusion $i_L : L \subset M$.
For each $f \in \U \cap \EE^G_{K_2}(L,M)^\tau$,
 since $s(f)|_L = f$ and $K_2 \subset L$,
 we have $s(f)|_{K_2} = f|_{K_2} = \id$,
 hence the restriction of $s$ is a local section
 for $r_2 : G_{K_2}(N_2) \to \EE^G_{K_2}(L,M)^\tau$.
\end{remark}

\begin{definition} \label{d_WLSP} 
Suppose $\tau$ is an admissible topology on $\EE^G_K(N, M)$. 
We say that the triple $(N, L, K)$ has the {\em weak local section property}
 for $G$ with respect to $\tau$ (abbrev.\ WLSP$_{G, \tau}$) 
 if there exists an open neighborhood $\V$ of $i_N$ in $(\EE^G_K(N, M),\tau)$
 and a continuous map $s : \V \to G_K(N)$ such that
 $s(f)|_L = f|_L$ for each $f \in \V$. 
\end{definition}

\begin{remark}\label{rem_WLSP}
(i) We can modify the map $s$ so that $s(i_{N}) = \id_M$. 
\begin{itemize}
\item[(ii)] 
When the topology $\tau$ is understood from the context,
 we omit the symbol $\tau$ from the notations. 
\end{itemize} 
\end{remark} 
 
%%%%%%%%%%%%%%%%%%%%%%%%%%%

\subsection{Exhausting sequences}\label{E-seq} \mbox{}

This subsection includes a remark on exhausting sequences of spaces 
and their associated towers in transformation groups. 
Suppose $M$ is a locally compact $\sigma$-compact space and 
$G$ is a transformation group on $M$. 
Recall that a subset $A$ of $M$ is {\em regular closed}
 if $A = \cl_M (\tint_M A)$ and that 
 a sequence $\F = (F_i)_{i\in\IN}$ of subsets of $M$
 is {\em discrete} if each point $x\in M$ has a neighborhood
 which meets at most one $F_i$. 

By the assumption,
 there exists a sequence $(M_i)_{i \in \IN}$
 of compact regular closed sets in $M$ 
 such that $M_ i \subset \tint_M M_{i+1}$ 
 and 
 $M = \bigcup_{i\in\IN}M_i$ ($= \bigcup_{i\in\IN}\tint_M M_i$).
It induces the tower $(G(M_i))_{i\in\IN}$ of closed subgroups of $G_c$
 and the multiplication map
$$p : \cbox_{i\in\IN} G(M_i) \to G_c,
 \quad p(h_1, \dots, h_n) = h_1 \cdots h_n.$$
For each $i \in \IN$,
 let $K_i = M \setminus \tint_M M_i$ and
 $L_i = M_i \setminus \tint_M M_{i-1}$, where $M_0=\emptyset$.
Then the sequences $(L_{2i-1})_{i\in\IN}$ and $(L_{2i})_{i\in\IN}$
 are discrete in $M$. 
There exists a sequence $(N_i)_{i\in\IN}$ of compact subsets of $M$
 such that $L_i \subset \tint_M N_i$
 and $N_i \cap N_j = \emptyset$ if $|i - j| \geq 2$
 (hence $N_i \subset \tint_M M_{i+1} \setminus M_{i-2}$ and
 subsequences $(N_{2i-1})_{i\in\IN}$ and $(N_{2i})_{i\in\IN}$
 are discrete in $M$).
Note that $G(M_i) = G_{K_i}$ and $L_i$ is regular closed since  
$L_i = \cl_M((\tint_M M_i) \setminus M_{i-1}))$. 
We call each of the sequences
 $(M_i)_{i\in\IN}$, $(M_i, L_i, N_i)_{i\in\IN}$ and
 $(M_i, K_i, L_i, N_i)_{i\in\IN}$ an {\em exhausting sequence} for $M$.
 
The next lemma directly follows from Lemma~\ref{lem_multi-section}. 

\begin{lemma}\label{lem_loc-sec_exh}
Suppose $(M_i)_{i\in\IN}$ is an exhausting sequence for $M$ and 
 the map $p : \cbox_{i\in\IN} G(M_i) \to G_c$ 
 has a local section at the identity element $e$. 
Then the following hold:
\begin{itemize}
\item[(1)] The map $p$ has a local section at any point of $G_c$. 
\item[(2)] If each $G(M_i)$ is locally contractible, then so is $G_c$.  
\item[(3)] If $G_c$ is paracompact, $H$ is a subgroup of $G$
 and each $H(M_i)$ is HD in $G(M_i)$,
 then $H_c$ is HD in $G_c$. 
\end{itemize}
\end{lemma}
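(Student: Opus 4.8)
The plan is to recognize this lemma as a direct specialization of Lemma~\ref{lem_multi-section}, applied with the ambient group $G_c$ in place of $G$ and with the tower of subgroups $G_i = G(M_i)$ ($i\in\IN$). First I would verify that this tower satisfies the two structural hypotheses of Lemma~\ref{lem_multi-section}, namely that it is increasing and that its union is the whole group. For monotonicity, the exhausting sequence satisfies $M_i \subset \tint_M M_{i+1} \subset M_{i+1}$, so any element supported in $M_i$ is also supported in $M_{i+1}$; since $G(M_i) = G_{M\setminus M_i}$ consists exactly of the elements whose support lies in $M_i$, this gives $G(M_i) \subset G(M_{i+1})$. For the union, each $G(M_i)$ lies in $G_c$ because $M_i$ is compact, while conversely, if $g\in G_c$ then its compact support $\supp(g)$ is covered by the open exhaustion $M = \bigcup_i \tint_M M_i$ and hence lies in a single $\tint_M M_i \subset M_i$, so $g\in G(M_i)$. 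Thus $G_c = \bigcup_{i\in\IN} G(M_i)$, and the multiplication map $p$ of the present statement coincides with the map $p$ of Lemma~\ref{lem_multi-section}.

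With these identifications in place, statements (1) and (2) follow verbatim from parts (1) and (2) of Lemma~\ref{lem_multi-section}, the only inputs being the assumed local section of $p$ at $e$ and, for (2), the local contractibility of each $G(M_i)$. The one point requiring a moment's care is statement (3). Here I would invoke Lemma~\ref{lem_multi-section}(3) with $G_c$ as the ambient group and $H_c$ as the distinguished subgroup, and I must check that the intersection $H_c \cap G(M_i)$ agrees with the group $H(M_i)$ appearing in the hypothesis. Since $G(M_i) \subset G_c$, one computes $H_c \cap G(M_i) = (H\cap G_c)\cap G(M_i) = H\cap G(M_i) = H(M_i)$, and the same covering argument as above shows $H_c = \bigcup_i H(M_i)$. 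Consequently the hypothesis that each $H(M_i)$ is HD in $G(M_i)$ is precisely the hypothesis needed by Lemma~\ref{lem_multi-section}(3), and since $G_c$ is assumed paracompact, that lemma applies and yields that $H_c$ is HD in $G_c$.

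Because the whole argument is a bookkeeping reduction, I expect no genuine obstacle: the only things to get right are the trivial reconciliation of the index set $\IN$ of the exhausting sequence with the index set $\w$ of Lemma~\ref{lem_multi-section}, and the two exhaustion identities $G_c = \bigcup_i G(M_i)$ and $H_c = \bigcup_i H(M_i)$, both of which rest on the compactness of supports together with the openness of the exhaustion $M = \bigcup_i \tint_M M_i$.
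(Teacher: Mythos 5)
Your reduction is exactly the paper's argument: the authors simply remark that Lemma~\ref{lem_loc-sec_exh} ``directly follows from Lemma~\ref{lem_multi-section},'' and your verification of the two exhaustion identities $G_c=\bigcup_i G(M_i)$ and $H_c\cap G(M_i)=H(M_i)$ supplies precisely the bookkeeping that remark leaves implicit. The proposal is correct and takes the same route as the paper.
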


%%%%%%%%%%%%%%%%%%%%%%%%%%%%%%%%%%%%

\section{Transformation groups with strong topology}
\label{Trans-gr-strong-top} 

In Section \ref{Homeo-Diffeo},
 we shall study the topological properties of
 the homeomorphism group $\HH(M)$ and the diffeomorphism group $\DD(M)$
 of a non-compact manifold $M$ endowed with the Whitney topology.
These groups acts naturally on the manifold $M$ and
 admit infinite products of elements with discrete supports.
This geometric property distinguishes these groups
 from other abstract topological groups
 and connects them to the box topology.
To clarify this situation,
 we introduce the notion of transformation groups with strong topology
 and study its basic properties.
In particular, we obtain some conditions under which 
 transformation groups are locally homeomorphic to some box/small box products
 (Propositions~\ref{p_loc-homeo} and \ref{prop_loc-sec}). 
In Section \ref{Homeo-Diffeo},
 these fundamental results are applied to yield main results of this article.
Our axiomatic approach is also intended for further applications
 to the study of other subgroups of $\HH(M)$ and $\DD(M)$.

\subsection{Transformation groups with strong topology} \mbox{} 

Throughout this subsection, 
 let $G$ be a transformation group on a space $M$. 
For each discrete sequence $\LL = (L_i)_{i\in\IN}$ of subsets of $M$,
 we have 
\begin{itemize}
\item[(i)\,] the group homomorphism \ 
$\lambda_\LL : \square_{i\in\IN}G(L_i) \to \HH(M)$ \ defined by 
$$\lambda_\LL((g_i)_{i\in\IN})|_{L_j} = g_j|_{L_j} \ \ (j \in \IN) \  
\ \ \ \text{and} \ \ \ \ 
\lambda_\LL((g_i)_{i\in\IN})|_{M \setminus\bigcup_{i\in\IN} L_i} = \id.$$ 
\item[(ii)] the function \ 
 $r_\LL : G \to \square_{i\in\IN} \EE^G(L_i, M)$ \ 
 defined by 
 $$r_\LL(g) = (g|_{L_i})_{i\in\IN}.$$  
\end{itemize}
Note that $\lambda_\LL^{-1}(\HH_c(M)) = \cbox_{i\in\IN} G(L_i)$
 if each $L_i$ is compact.

\begin{definition} \label{d_strong-top}
We say that the transformation group $G$ on $M$ has {\em a strong topology}  
 if it satisfies the following conditions:
\begin{itemize}
\item[(1)]
The natural injection $G \to \HH(M)$ is continuous
 with respect to the Whitney topology on $\HH(M)$.
\item[(2)]
For any discrete sequence $\LL = (L_i)_{i\in\IN}$ in $M$,  
$$\mbox{(i) $\im \lambda_\LL \subset G$ \quad and \quad (ii) 
 the map $\lambda_\LL : \square_{i\in\IN}G(L_i)
 \to G\big(\bigcup_{i\in\IN} L_i\big)$ \ is an open embedding.}$$ 
\end{itemize}
\end{definition}

\begin{definition} \label{d_strong-top-1}
Let $\F$ be a collection of subsets of $M$. 
We say that the transformation group $G$ on $M$ has
 {\em a strong topology with respect to} $\F$ 
 if $G$ has a strong topology and satisfies the following additional condition:
\begin{itemize}
\item[$(\ast)$]
For any discrete sequence $\LL = (L_i)_{i\in\IN}$ in $M$ with $L_i \in \F$,
 the function $r_\LL : G \to \square_{i\in\IN} \widehat{\EE}^G(L_i, M)$
 is continuous. 
\end{itemize}
\end{definition}

\subsection{Transformation groups locally homeomorphic to box products}
\label{Trans-gr-box} \mbox{}

\begin{assumption}
Throughout this subsection,
 we assume that $M$ is a locally compact $\sigma$-compact space
 and $G$ is a transformation group on $M$ with 
 a strong topology with respect to a collection $\F$ of subsets of $M$. 
\end{assumption}

For notational simplicity, 
 for pairs of spaces and maps between them,
 we use the following terminology and notations:
For pairs of spaces $(X, A)$ and $(Y, B)$,
 we set $(X, A)\times (Y, B) = (X \times Y, A \times B)$.
We say that
\begin{itemize} 
\item[(i)\,]
 $(X, A)$ and $(Y, B)$ are {\em locally homeomorphic}
 and write 
 $(X, A) \approx_\ell (Y, B)$
 if for each point $a \in A$
 there exists an open neighborhood $U$ of $a$ in $X$
 and an open subset $V$ of $Y$
 which admit a homeomorphism of pairs of spaces
 $(U, U \cap A) \approx (V, V \cap B)$.
\item[(ii)]
 a map $p : (X, A) \to (Y, B)$ of pairs of spaces
 has a local section at a point $b \in B$, 
 if there exists an open neighborhood $V$ of $b$ in $Y$ and
 a map $s : (V, V \cap B) \to (X, A)$ of pairs such that $ps = \id_V$.
\end{itemize}

Suppose $\LL = (L_i)_{i\in\IN}$, $\N = (N_i)_{i\in\IN}$ and
 $\K = (K_i)_{i\in\IN}$ are discrete sequences of compact subsets of $M$ 
 such that $L_i \subset N_i$ for each $i \in \IN$ and $M = L \cup K$,
 where $L = \bigcup_{i\in\IN} L_i$ and $K = \bigcup_{i\in\IN} K_i$. 
Since $G$ has a strong topology, 
 the discrete sequences $\N$ and $\K$ induce the open embeddings 
$$\lambda_\N : \square_{i\in\IN} G(N_i) \to G(N) \quad\text{and}\quad 
 \lambda_\K: \square_{i\in\IN} G(K_i) \to G(K),$$ 
 where $N = \bigcup_{i\in\IN} N_i$. 
Then we obtain the map 
$$\rho : (\square,\cbox)_{i\in\IN} G(N_i) 
\times (\square,\cbox)_{i\in\IN} G_L(K_i) \to (G, G_c),$$  
defined by \ \ 
$\rho((g_i)_{i\in\IN},(h_i)_{i\in\IN}) 
= \lambda_\N((g_i)_{i\in\IN})\lambda_\K((h_i)_{i\in\IN}).$ 

\begin{lemma}\label{l_LSP}
 If $(N_i, L_i)$ has LSP$_G$ and $L_i \in \F$ for each $i \in \IN$, then
\begin{itemize}
\item[(1)]
 $(G, G_c) \approx_\ell (\square,\cbox)_{i\in\IN} \widehat{\EE}^G(L_i,M)
 \times (\square,\cbox)_{i\in\IN}G_L(K_i),$
\item[(2)]
 the map $\rho$ has a local section at $\id_M$. 
\end{itemize}
\end{lemma}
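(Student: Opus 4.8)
The plan is to build both the local homeomorphism and the local section out of the same geometric data, namely the open embeddings $\lambda_\N$ and $\lambda_\K$ provided by the strong topology, combined with the local section coming from the LSP$_G$ hypothesis. I would first record the structural picture: since $M = L \cup K$ and $\N, \K$ are discrete sequences of compact sets, an element of $G$ near $\id_M$ should decompose as a product of a factor supported near $L$ (measured through its restrictions to the $L_i$) and a factor supported in $K$ and fixing $L$. The target pair on the right is exactly the pair of box/small box products recording these two families of data, so the map $\rho$ is the natural candidate realizing the homeomorphism, and (1) and (2) will be proved together.

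First I would assemble the local section. For each $i$, the hypothesis that $(N_i, L_i)$ has LSP$_G$ gives, by Remark~\ref{rem_LSP_G}, a local section $s_i$ of the restriction map $r : G(N_i) \to \widehat{\EE}^G(L_i, M)$ at $i_{L_i}$, normalized so that $s_i(i_{L_i}) = \id_M$. Because the supports lie in the discrete family $\N$, I would use the open embedding $\lambda_\N$ from the strong topology to glue these individual sections into a single section $s_\N$ defined on a neighborhood of the distinguished point in $\square_{i\in\IN}\widehat{\EE}^G(L_i,M)$, landing in $\im\lambda_\N \subset G(N)$; the condition $L_i \in \F$ is what makes $r_\LL$ continuous (Definition~\ref{d_strong-top-1}$(\ast)$), so this gluing is continuous and respects the small box substructure. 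The complementary $K$-factor needs no section: the map $\lambda_\K$ is itself an open embedding, so it directly provides the second half.

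Next I would define the local homeomorphism. On a suitable neighborhood $U$ of $\id_M$ in $G$, I would send $g$ to the pair consisting of its restriction data $(g|_{L_i})_{i\in\IN}$ in the first box product and the $K$-factor $(\lambda_\N(s_\N(g|_{L_i}))^{-1}g$ read off through $\lambda_\K^{-1}$ in the second. The inverse of this map is precisely $\rho$: given data $((f_i)_i,(h_i)_i)$, form $\lambda_\N(s_\N((f_i)_i))\,\lambda_\K((h_i)_i)$. I would check that this is a mutually inverse pair of continuous maps of pairs, that it carries the small box products to $G_c$ (here $\lambda_\LL^{-1}(\HH_c(M)) = \cbox_{i\in\IN}G(L_i)$ from the strong topology is the key bookkeeping fact, ensuring the compactly supported elements correspond exactly to the small box product points), and that openness of $\lambda_\N$ and $\lambda_\K$ makes the images open. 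This simultaneously yields (1) and exhibits $\rho$ as a local homeomorphism, from which (2) is immediate since $\rho$ precomposed with the inverse homeomorphism recovers the identity.

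The main obstacle I expect is verifying the continuity and the pair-preservation of the gluing map $s_\N$ and of the decomposition $g \mapsto$ its two factors. Continuity of $r_\LL$ from $(\ast)$ controls the first coordinate, but I must confirm that the product $\lambda_\N(s_\N(g|_{L_i}))^{-1}g$ genuinely lies in $G(K)$ and fixes $L$ (so that $\lambda_\K^{-1}$ applies), which relies on $s_i(g|_{L_i})$ agreeing with $g$ on $L_i$ and the supports separating correctly across the discrete families; the delicate point is that the residual factor must fix all of $L = \bigcup_i L_i$, not merely each $L_i$ individually, and this is where the compatibility of the sections $s_i$ with the global restriction data, guaranteed by $L_i \in \F$, must be used carefully. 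The small-box bookkeeping — that only finitely many coordinates are nontrivial exactly when the support is compact — is routine given the strong topology but must be tracked to conclude the pair statement rather than just the ambient one.
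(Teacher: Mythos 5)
Your proposal is correct and follows essentially the same route as the paper: the decomposition $g=\big(\lambda_\N\, s\, r_\LL(g)\big)\cdot\big(\eta(g)^{-1}g\big)$ with $r_\LL$ continuous by the strong-topology condition $(\ast)$, the coordinatewise sections $s_i$ from LSP$_G$, and $\lambda_\K$ identifying the residual factor in $G_L\subset G(K)$ with $(\square,\cbox)_{i}G_L(K_i)$, proving (1) and (2) from the same data. The one ``delicate point'' you flag is actually immediate ($\eta(g)^{-1}g$ fixes each $L_i$ pointwise, hence fixes $L=\bigcup_i L_i$), and the only small imprecision is that the section of $\rho$ is not literally the inverse of your decomposition but its composition with $s\times\lambda_\K^{-1}$ on a possibly smaller neighborhood, exactly as in the paper.
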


\begin{proof}
We are concerned with the following maps:
$$\begin{array}[c]{c}
\begin{array}[b]{ccccccl}
& r_\LL & & s & & \lambda_\N & \\[-1mm]
\V & \longrightarrow  & \square_{i\in\IN} \V_i &
 \longrightarrow & \square_{i\in\IN} G (N_i) & \longrightarrow & G(N),
\end{array} \\[3mm]
\begin{array}[b]{ccccccc}
& \phi & & s \times \id & & \theta \\[-1mm]
\V & \longrightarrow  & \square_{i\in\IN} \V_i \times G_L &
 \longrightarrow & \square_{i\in\IN} G(N_i) \times G_L & \longrightarrow & G,
\end{array} \\[4mm] 
\eta = \lambda_\N s \,r_\LL, \ \ \theta (s \times \id) \phi = \id_\V,
 \hspace{3mm} \psi = \theta (s \times \id).
\end{array}$$ 
\vskip 2mm \noindent
These are defined as follows: 
Since $G$ has a strong topology with respect to $\F$ 
and $L_i \in \F$ ($i \in \IN$),
 we obtain the map
$$r_\LL : G \to \square_{i\in\IN} \widehat{\EE}^G(L_i,M),\quad
 r_\LL(g) = (g|_{L_i})_{i\in\IN}.$$
By the assumption, for each $i \in \IN$,
 there exists an open neighborhood $\V_i$ of
 the inclusion $i_{L_i} : L_i \subset M$ in $\widehat{\EE}^G(L_i, M)$
 and a map $s_i : \V_i \to G(N_i)$ such that
 $s_i(f)|_{L_i} = f|_{L_i}$ for each $f \in \V_i$ and $s_i(i_{L_i}) = \id_M$.
The maps $s_i$ ($i \in \IN$) determine the map
$$s : \square_{i\in\IN} \V_i \to \square_{i\in\IN} G(N_i),
 \quad s((f_i)_{i\in\IN}) = (s_i(f_i))_{i\in\IN}.$$ 
The preimage $\V = r_\LL^{-1}(\square_{i\in\IN} \V_i)$
 is an open neighborhood of $\id_M$ in $G$.
Let $\eta = \lambda_\N s \,r_\LL : \V \to G(N)$.
For every $g \in \V$,
 since $\eta(g)|_{L_i} = g|_{L_i}$ for each $i \in \IN$,
 we have $\eta(g)^{-1} g \in G_L$.
The maps $\phi$ and $\theta$ are defined by
$$\phi(g) = (r_\LL(g), \eta(g)^{-1} g) \quad \text{and} \quad  
\theta((g_i)_{i\in\IN}, h) = \lambda_{\N}((g_i)_{i\in\IN})h.$$
Then $\theta (s \times \id) \phi = \id_\V$ because
 $\theta(s \times \id)\phi(g) = \lambda_\N sr_\LL(g) \eta(g)^{-1} g = g$
 for each $g \in \V$. 
Since $M = L \cup K$,
 we have $G_L \subset G(K)$.
It follows from the definition of $\lambda_\K$ that 
$$(\square, \cbox)_{i\in\IN} G_L(K_i) \begin{array}[b]{c}\lambda_{\K} \\ 
\approx 
\end{array} 
 (\im \lambda_\K \cap G_L, \im\lambda_\K \cap G_{L,c})
 \approx_\ell (G_L,G_{L,c}).$$ 

(1) Consider the map $\psi = \theta(s \times \id)$.
For each $((f_i)_{i\in\IN},h) \in \square_{i\in\IN} \V_i \times G_L$,
 we can write
$$\psi((f_i)_{i\in\IN},h) = fh, \quad\text{where}\quad 
f = \lambda_{\N}((s_i(f_i))_{i\in\IN}).$$
Since $fh|_{L_i} = s_i(f_i)|_{L_i} = f_i \in \V_i$,
 it follows that 
 $$r_\LL(fh) = (f_i)_{i\in\IN} \in \square_{i\in\IN}\V_i,$$
 which means $fh \in \V$.
Thus, we obtain the map $$\psi : \square_{i\in\IN} \V_i \times G_L \to \V$$ 
 such that $\psi\phi = \theta(s \times \id)\phi = \id_\V$.
Moreover,
$$\eta(fh)^{-1}fh = \lambda_\N((s(f_i))_{i\in\IN})^{-1}fh = h.$$
Since $r_\LL(fh) = (f_i)_{i\in\IN}$,
 it follows that $\phi\psi((f_i)_{i\in\IN},h) = ((f_i)_i, h)$.
Therefore, $\phi$ is a homeomorphism with $\phi^{-1} = \psi$.
On the other hand,
 $\phi(\V \cap G_c) = \cbox_{i\in\IN} \V_i \times G_{L,c}$
 because $s_i(i_{L_i}) = \id_M$.
Consequently, 
$$(\V, \V \cap G_c) \approx
 (\square,\cbox)_{i\in\IN} \V_i \times (G_L, G_{L,c}).$$ 
Recall $(G_L, G_{L,c}) \approx_\ell (\square, \cbox)_{i\in\IN} G_L(K_i)$.
Thus, we have 
$$(G, G_c) \approx_\ell (\square, \cbox)_{i\in\IN} \widehat{\EE}^G(L_i,M)
 \times (\square, \cbox)_{i\in\IN} G_L(K_i).$$

(2) The map $\rho$ has the factorization 
$$\xymatrix@M+4pt{
\square_{i\in\IN} G(N_i) \times \square_{i\in\IN} G_L(K_i) 
\ar[r]^{\mbox{\hspace{20mm} $\rho$}} \ar[d]_{\mbox{\small $\id \times \lambda_\K$}}^{\mbox{\small \ $\approx$}} & G \\ 
\square_{i\in\IN} G(N_i) \times (\im \lambda_{\K} \cap G_L) 
\ar[r]_{\mbox{\small \hspace{6mm} $\subset$}} & 
\square_{i\in\IN} G(N_i) \times G_L \ar[u]_{\mbox{\,$\theta$}}
}$$ 
Since $\theta (s \times \id) \phi = \id_\V$,
 the map $\theta$ has the next local section at $\id_M$: 
$$\sigma_0 = (s \times \id) \phi : (\V, \V \cap G_c) \to
 (\square, \cbox)_{i\in\IN} G(N_i) \times (G_L, G_{L, c}).$$
Since $\im (\id \times \lambda_\K)$ is an open neighborhood of
 $$\sigma_0(\id_M) = ((\id_M, \id_M, \dots), \id_M)$$ 
 in $\square_{i\in\IN} G(N_i) \times G_L$, 
 we have a neighborhood $\U$ of $\id_M$ in $\V$
 such that $\sigma_0(\U) \subset \im (\id \times \lambda_\K)$. 
The following map is a local section of $\rho$ at $\id_M$: 
$$\sigma = (\id \times \lambda_{\K}^{-1})\, \sigma_0|_{\U} :
 (\U, \U \cap G_c) \to (\square, \cbox)_{i\in\IN} G(N_i)
 \times (\square, \cbox)_{i\in\IN} G_L(K_i).$$
This completes the proof. 
\end{proof}

\begin{proposition}\label{p_loc-homeo}
Suppose $(M_i, L_i, N_i)_{i\in\IN}$ is an exhausting sequence for $M$.
If $(N_{2i}, L_{2i})$ has LSP$_G$ and $L_{2i} \in \F$ for each $i \in \IN$,
 then the following hold:
\begin{itemize}
\item[(1)] $(G, G_c) \approx_\ell
 \big(\square, \cbox)_{i\in\IN} \widehat{\EE}^G(L_{2i}, M)
 \times (\square, \cbox)_{i\in\IN} G(L_{2i-1}),$ 
\vskip 1mm
\item[(2)] The multiplication map $p : \cbox_{i\in\IN} G(M_i) \to G_c$
 has a local section at any point of $G_c$. 
\end{itemize}
\end{proposition}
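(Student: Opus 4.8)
The plan is to reduce both assertions to Lemma~\ref{l_LSP} by splitting the exhausting sequence into its even and odd parts. Concretely, I set $\LL = (L_{2i})_{i\in\IN}$, $\N = (N_{2i})_{i\in\IN}$ and $\K = (L_{2i-1})_{i\in\IN}$. By the definition of an exhausting sequence each of these is a discrete sequence of compact subsets of $M$, we have $L_{2i}\subset N_{2i}$, and $\bigcup_i L_{2i}\cup\bigcup_i L_{2i-1} = \bigcup_j L_j = M$, so that $M = L\cup K$ with $L = \bigcup_i L_{2i}$ and $K = \bigcup_i L_{2i-1}$. Thus $\LL,\N,\K$ satisfy the standing hypotheses of Lemma~\ref{l_LSP}, and the assumption of the proposition is exactly that $(N_{2i},L_{2i})$ has LSP$_G$ and $L_{2i}\in\F$ for every $i$, which is the hypothesis of Lemma~\ref{l_LSP}.

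For (1), Lemma~\ref{l_LSP}(1) applied to this data gives
$$(G, G_c)\approx_\ell (\square,\cbox)_{i\in\IN}\widehat{\EE}^G(L_{2i}, M)\times(\square,\cbox)_{i\in\IN} G_L(L_{2i-1}),$$
so it remains only to identify $G_L(L_{2i-1})$ with $G(L_{2i-1})$. The inclusion $G_L(L_{2i-1})\subset G(L_{2i-1})$ is trivial, and for the converse I check that every $g$ with $\supp(g)\subset L_{2i-1}$ automatically satisfies $g|_L = \id$. This is a short verification from the geometry of the exhausting sequence: $L_{2i-1}$ lies between the levels $M_{2i-2}$ and $M_{2i-1}$ while each $L_{2j}$ lies between $M_{2j-1}$ and $M_{2j}$, so the interiors are pairwise disjoint and $L\cap L_{2i-1}\subset\bd_M L_{2i-1}$, on which $g$ is forced to be the identity by continuity. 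Hence $G_L(L_{2i-1}) = G(L_{2i-1})$ and (1) follows.

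For (2), by Lemma~\ref{lem_loc-sec_exh}(1) it suffices to produce a local section of $p:\cbox_{i\in\IN}G(M_i)\to G_c$ at the single point $\id_M$, the extension to all points of $G_c$ being then automatic. Lemma~\ref{l_LSP}(2) already provides a local section $\sigma$ of $\rho$ at $\id_M$, which as a map of pairs carries a neighbourhood of $\id_M$ in $G_c$ into $\cbox_i G(N_{2i})\times\cbox_i G(L_{2i-1})$ and satisfies $\rho\sigma=\id$. The plan is to build a continuous reindexing map $q:\cbox_i G(N_{2i})\times\cbox_i G(L_{2i-1})\to\cbox_j G(M_j)$ with $p\circ q = \rho$; then $q\sigma$ is the desired local section of $p$ at $\id_M$. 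I define $q$ by placing the even factor $g_i\in G(N_{2i})$ into the tower coordinate $4i$ and the odd factor $h_i\in G(L_{2i-1})$ into coordinate $4i+1$, all other coordinates being the identity. Since $N_{2i}\subset M_{2i+1}\subset M_{4i}$ and $L_{2i-1}\subset M_{2i-1}\subset M_{4i+1}$, these assignments land in the correct subgroups $G(M_{4i})$ and $G(M_{4i+1})$, and because the slots are fixed and pairwise distinct, $q$ is a continuous embedding of small box products.

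The one point that needs care, and the main obstacle, is the identity $p\circ q = \rho$: the coordinatewise product in increasing slot order must reproduce $\rho\big((g_i),(h_i)\big) = \big(\prod_i g_i\big)\big(\prod_i h_i\big)$ (the latter being $\lambda_\N((g_i))\lambda_\K((h_i))$ by disjointness of supports). The slot order interleaves the factors as $g_1 h_1 g_2 h_2\cdots$, which is a priori different from the grouped product $g_1 g_2\cdots h_1 h_2\cdots$. The reconciliation rests again on the level structure: one checks that $\supp(g_i)=N_{2i}$ can meet $\supp(h_j)=L_{2j-1}$ only when $j\in\{i,i+1\}$, and in each such case $g_i$ precedes $h_j$ in the slot order, while all remaining pairs have disjoint supports and hence commute. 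I may therefore sort the interleaved product by moving each $g_k$ leftward past the commuting factors $h_1,\dots,h_{k-1}$, arriving at $\big(\prod_i g_i\big)\big(\prod_i h_i\big)=\rho\big((g_i),(h_i)\big)$. This establishes $p\circ q = \rho$ and completes the construction. The delicate issue throughout is that continuity forbids a $g$-dependent slot assignment, so one cannot simply place all $g$'s before all $h$'s; this is precisely why the fixed interleaved placement together with the commutation analysis is required.
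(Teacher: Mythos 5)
Your proof is correct and follows essentially the same route as the paper: both parts reduce to Lemma~\ref{l_LSP} applied to the even/odd splitting $\LL=(L_{2i})$, $\N=(N_{2i})$, $\K=(L_{2i-1})$, with $G_L(L_{2i-1})=G(L_{2i-1})$ justified by the regular-closedness of the levels, and part (2) obtained by re-slotting the section of $\rho$ into $\cbox_{i\in\IN} G(M_i)$ via a fixed interleaved placement whose slot-ordered product equals $\rho$ by commutation of disjointly supported factors. The paper uses slots $2i+1,\,2i+2$ rather than your $4i,\,4i+1$ and merely asserts the identity $(f_1f_2\cdots)(g_1g_2\cdots)=f_1g_1f_2g_2\cdots$ that you verify explicitly; these differences are cosmetic.
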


\begin{proof}
We apply Lemma \ref{l_LSP} to the discrete families 
 $\LL = (L_{2i})_{i \in \IN}$, $\N = (N_{2i})_{i\in\IN}$
 and $\K = (L_{2i-1})_{i\in\IN}$.
Let $L = \bigcup_{i\in\IN} L_{2i}$.
Then $G_{L}(L_{2i-1}) = G(L_{2i-1})$ for each $i \in \IN$
 because $L_{2i-2}$ is regular closed.
The statement (1) is none other than Lemma \ref{l_LSP}\,(1).
It remains to show the statement (2). 
Due to Lemma \ref{l_LSP}\,(2), the map 
$$\begin{array}[c]{l} 
\rho : \cbox_{i\in\IN} G(N_{2i}) \times \cbox_{i\in\IN} G(L_{2i-1})
 \longrightarrow G_c, \\[2mm] 
\hspace*{15mm} \rho((f_i)_{i\in\IN}, (g_i)_{i\in\IN})
 = \lambda_{\N}((f_i)_{i\in\IN})\lambda_{\K}((g_i)_{i\in\IN})
 \end{array}$$ 
 has a local section at $\id_M$, say
$$\sigma : \W \longrightarrow
 \cbox_{i\in\IN} G(N_{2i}) \times \cbox_{i\in\IN} G(L_{2i-1}).$$
Note that the image $\sigma(h) = ((f_i)_{i\in\IN}, (g_i)_{i\in\IN})$
 of each $h \in \W$ satisfies the following conditions:
\begin{itemize}
\item[(a)] $h = \lambda_{\N}((f_i)_{i\in\IN})\lambda_{\K}((g_i)_{i\in\IN}) 
=  (f_1f_2\cdots)(g_1g_2\cdots) = f_1g_1f_2g_2f_3g_3 \cdots;$ 
\vskip 1mm 
\item[(b)] 
$f_i \in G(N_{2i}) \subset G(M_{2i+1})$ \ and \ 
$g_i \in G(L_{2i-1}) \subset G(M_{2i-1}) 
\subset G(M_{2i+2})$ \ \ for each $i \in \IN$;
\vskip 1mm
\item[(c)]
 $\big(\id_M, \id_M, f_1, g_1, f_2, g_2, \dots\big) \in \cbox_{i\in\IN} G(M_i)$ \ and \ 
 $h = p\big(\id_M, \id_M, f_1, g_1, f_2, g_2, \dots\big)$.
\end{itemize}
Therefore, a local section at $\id_M$ of
 the map $p : \cbox_{i\in\IN} G(M_i) \to G_c$ is defined by 
\[ s : \W \to \cbox_{i\in\IN} G(M_i),
\quad s(h) = \big(\id_M, \id_M, f_1, g_1, f_2, g_2, \dots\big). \]
The conclusion now follows from Lemma~\ref{lem_loc-sec_exh}\,(1). 
This completes the proof. 
\end{proof}

\subsection{Strong topology with respect to an admissible collection} 
\label{Trans-gr-strong-top-2} \mbox{}

In the cases
 where $G$ is the diffeomorphism group of a smooth $n$-manifold ($n \geq 1$)
 or the homeomorphism group of a topological $n$-manifold ($n = 1, 2$),
 we can apply Proposition~\ref{p_loc-homeo}. 
However, when $G$ is the homeomorphism group of
 an $n$-manifold $M$ ($n \geq 3$),
 it is still an open problem whether
 the restriction map \break 
 $r : G \to \EE^G(L, M)$ has a local section
 for a locally flat $n$-submanifold $L$ of $M$
 (cf.\ Subsection \ref{Homeo-gr}). 
At this moment,
 we only know that the deformation theorem for embeddings
 in topological manifolds (\cite{EK}) implies the weak local section property. 
This motivates the formulation of this section. 

Throughout this subsection,
 we assume that $G$ is a transformation group on a space $M$. 

\begin{definition}\label{d_adm-coll}
An {\em admissible collection} $(\F, \atau{}{\ast}{M})$
 for the transformation group $G$ on $M$ 
 consists of a collection $\F$ of subsets of $M$ and 
 an assignment of an admissible topology $\atau{}{L}{M}$
 on the set $\EE^G(L, M)$ to each $L \in \F$. 
\end{definition}

\begin{convention}
When an admissible collection $(\F,\atau{}{\ast}{M})$ is fixed,
 for any triple $(N, L, K)$ of subsets of $M$ with $L \in \F$, 
 let $\atau{K}{L}{N}$ denote the subspace topology on $\EE^G_{K}(L,N)$
 inherited from the space $(\EE^G_{}(L, M), \atau{}{L}{M})$,
 which is an admissible topology on $\EE^G_{K}(L, N)$. 
\end{convention}

\begin{definition} \label{d_strong-top-2}
Let $(\F, \atau{}{\ast}{M})$ be an admissible collection
 for the transformation group $G$ on $M$. 
We say that $G$ has {\em a strong topology}
 with respect to $(\F, \atau{}{\ast}{M})$
 if $G$ has a strong topology and
 satisfies the following additional condition:
\begin{itemize}
\item[$(\ast \ast)$]
For any discrete sequence $\LL = (L_i)_{i\in\IN}$ in $M$ with $L_i \in \F$,
 the following function is continuous:
$$r_\LL : G \to \square_{i\in\IN} (\EE^G(L_i, M), \atau{}{L_i}{M}).$$
\end{itemize}
\end{definition}

\begin{assumption} 
Below we assume that $M$ is a locally compact $\sigma$-compact space and
 $G$ is a transformation group on $M$ with a strong topology
 with respect to an admissible collection $(\F, \atau{}{\ast}{M})$. 
\end{assumption}

Suppose $\LL = (L_i)_{i\in\IN}$, $\N = (N_i)_{i\in\IN}$ and
 $\K = (K_i)_{i\in\IN}$ are discrete sequences of subsets of $M$ 
 such that $L_i \subset N_i$ $(i \in \IN)$ and 
 $M = L \cup K$. 
Here, $L = \bigcup_{i\in\IN} L_i$, $N = \bigcup_{i\in\IN} N_i$
 and $K = \bigcup_{i\in\IN} K_i$. 
Since $G$ has a strong topology, 
 the sequences $\N$ and $\K$ induce the open embeddings 
$$\lambda_\N : \square_{i\in\IN} G(N_i) \to G(N) \quad \text{and} \quad 
 \lambda_\K: \square_{i\in\IN} G(K_i) \to G(K).$$ 
These maps determine the map 
$$\rho : (\square, \cbox)_{i\in\IN}G(N_i)
 \times (\square, \cbox)_{i\in\IN}G_L(K_i) \to (G, G_c), \hspace{4mm} 
\rho((g_i)_{i\in\IN},(h_i)_{i\in\IN})
 = \lambda_\N((g_i)_{i\in\IN})\lambda_\K((h_i)_{i\in\IN}).$$

Under the weaker condition WLSP$_G$, we obtain the following conclusions. 

\begin{lemma}\label{lem_WLSP} 
If $(N_i, L_i)$ has WLSP$_G$ and $N_i \in \F$ for each $i \in \IN$,
 then the map $\rho$ has a local section at $\id_M$. 
\end{lemma}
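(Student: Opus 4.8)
The plan is to follow the proof of Lemma~\ref{l_LSP}\,(2) almost verbatim, with one change: the restriction map $r_\LL$ recording restrictions to the small sets $L_i$ is replaced by the coarser map
$$r_\N : G \to \square_{i\in\IN}(\EE^G(N_i, M), \atau{}{N_i}{M}), \qquad r_\N(g) = (g|_{N_i})_{i\in\IN},$$
which is continuous by condition $(\ast\ast)$ of Definition~\ref{d_strong-top-2}, since $\N = (N_i)_{i\in\IN}$ is discrete and $N_i \in \F$. First I would apply WLSP$_G$ to each triple $(N_i, L_i)$ (i.e.\ with $K = \emptyset$) to obtain an open neighborhood $\V_i$ of $i_{N_i}$ in $(\EE^G(N_i, M), \atau{}{N_i}{M})$ and a continuous map $s_i : \V_i \to G(N_i)$ with $s_i(f)|_{L_i} = f|_{L_i}$, normalized so that $s_i(i_{N_i}) = \id_M$ (Remark~\ref{rem_WLSP}\,(i)). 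These assemble into a continuous map $s : \square_i \V_i \to \square_i G(N_i)$, $s((f_i)_i) = (s_i(f_i))_i$, and $\V = r_\N^{-1}(\square_i \V_i)$ is an open neighborhood of $\id_M$ in $G$.

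Next I would set $\eta = \lambda_\N s\, r_\N : \V \to G(N)$ and verify the key containment $\eta(g)^{-1}g \in G_L$ for every $g \in \V$. This is precisely where the WLSP matching condition enters: from the definition of $\lambda_\N$ we get $\eta(g)|_{N_j} = s_j(g|_{N_j})|_{N_j}$, whence $\eta(g)|_{L_j} = s_j(g|_{N_j})|_{L_j} = g|_{L_j}$ for each $j$, so $\eta(g)$ agrees with $g$ on $L = \bigcup_i L_i$. Since $M = L \cup K$ forces $G_L \subset G(K)$, I may then define
$$\sigma_0 : (\V, \V \cap G_c) \to (\square, \cbox)_{i\in\IN} G(N_i) \times (G_L, G_{L,c}), \qquad \sigma_0(g) = \big(s(r_\N(g)),\ \eta(g)^{-1}g\big),$$
and check the single identity $\theta \sigma_0 = \id_\V$, where $\theta((g_i)_i, h) = \lambda_\N((g_i)_i)h$; indeed $\theta(\sigma_0(g)) = \eta(g)\,\eta(g)^{-1}g = g$. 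The pair structure is respected: if $g \in G_c$, then by discreteness of $\N$ we have $g|_{N_i} = i_{N_i}$, hence $s_i(g|_{N_i}) = \id_M$, for all but finitely many $i$, so $s(r_\N(g)) \in \cbox_i G(N_i)$, while $\eta(g)^{-1}g$ plainly has compact support.

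Finally I would convert $\sigma_0$ into a local section of $\rho$ exactly as at the end of Lemma~\ref{l_LSP}\,(2). As $\lambda_\K$ is an open embedding, $\im(\id \times \lambda_\K)$ is an open neighborhood of $\sigma_0(\id_M) = ((\id_M, \id_M, \dots), \id_M)$ in $\square_i G(N_i) \times G_L$, so there is a neighborhood $\U \subset \V$ of $\id_M$ in $G$ with $\sigma_0(\U) \subset \im(\id \times \lambda_\K)$; moreover $\lambda_\K$ restricts to a homeomorphism $(\square, \cbox)_i G_L(K_i) \approx (\im\lambda_\K \cap G_L, \im\lambda_\K \cap G_{L,c})$ (each coordinate of $\lambda_\K^{-1}(\eta(g)^{-1}g)$ lies in $G_L(K_i)$ because $\eta(g)^{-1}g$ fixes $L$ and the $K_i$ are pairwise disjoint). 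Then
$$\sigma = (\id \times \lambda_\K^{-1})\,\sigma_0|_\U$$
is the desired local section of $\rho$ at $\id_M$, since $\rho\sigma(g) = \eta(g)\,\eta(g)^{-1}g = g$.

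The step I expect to be the real point is recognizing that WLSP controls $s_i(f)$ only on $L_i$, never on all of $N_i$; this is exactly why one can claim only a section here and not the full local homeomorphism of Lemma~\ref{l_LSP}\,(1). Concretely, the reverse relation $\phi\psi = \id$ used in Lemma~\ref{l_LSP}\,(1) collapses, because for $h \in G_L$ the element $\lambda_\N((s_i(f_i))_i)h$ need no longer restrict to $f_i$ on $N_i$ --- the factor $h$ may still move points of $N_i \setminus L$. Checking that the one-sided identity $\theta\sigma_0 = \id_\V$ nevertheless survives, which rests entirely on the $L$-matching computation above, is the heart of the argument; the remaining continuity assertions are routine consequences of $(\ast\ast)$, of the continuity of box products of continuous maps, and of $\lambda_\N, \lambda_\K$ being open embeddings.
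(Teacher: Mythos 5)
Your proof is correct and follows the paper's own route exactly: the paper's proof of this lemma is literally the statement that one repeats the argument of Lemma~\ref{l_LSP} (omitting part (1)) with $r_\LL$ replaced by $r_\N$, so that each $\V_i$ becomes a neighborhood of $i_{N_i}$ in $(\EE^G(N_i,M),\atau{}{N_i}{M})$, which is precisely what you do. Your closing remark about why only the one-sided identity $\theta\sigma_0=\id_\V$ survives under WLSP is exactly the reason the paper excludes the local-homeomorphism conclusion here.
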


\begin{proof}
The proof is exactly a repitition of the arguments in 
Lemma~\ref{l_LSP}
 (except the part (1)). 
There is only one point to be modified: 
\begin{itemize}
\item[$(\dagger)$] The map $r_\LL$ is replaced by the map $r_\N$
 so that $\V_i$ is an open neighborhood of the inclusion 
\mbox{$i_{N_i} : N_i \subset M$} in the space 
 $(\EE^G(N_i, M),\atau{}{N_i}{M})$.
\end{itemize} 
The remaining parts are unchanged. 
\end{proof}

\begin{proposition}\label{prop_loc-sec}
Suppose $(M_i, L_i, N_i)_{i\in\IN}$ is an exhausting sequence for $M$.
If each $(N_{2i}, L_{2i})$ has WLSP$_G$ and $N_{2i} \in \F$,
 then the multiplication map $p : \cbox_{i\in\IN} G(M_i) \to G_c$
 has a local section at any point of $G_c$.
\end{proposition}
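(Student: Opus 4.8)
The plan is to deduce Proposition~\ref{prop_loc-sec} from Proposition~\ref{p_loc-homeo}\,(2) by essentially repeating the argument of the latter, but feeding it the weaker input Lemma~\ref{lem_WLSP} in place of Lemma~\ref{l_LSP}\,(2). The two hypotheses differ only in that here we assume WLSP$_G$ for $(N_{2i}, L_{2i})$ with $N_{2i}\in\F$, rather than LSP$_G$ with $L_{2i}\in\F$; and correspondingly the ambient result we may invoke is Lemma~\ref{lem_WLSP} instead of Lemma~\ref{l_LSP}\,(2). Since Lemma~\ref{lem_WLSP} yields exactly the same conclusion as Lemma~\ref{l_LSP}\,(2)---namely that the map $\rho$ built from $\N=(N_{2i})_{i\in\IN}$ and $\K=(L_{2i-1})_{i\in\IN}$ has a local section at $\id_M$---the remainder of the proof of Proposition~\ref{p_loc-homeo}\,(2) goes through verbatim.

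Concretely, first I would apply Lemma~\ref{lem_WLSP} to the discrete families $\LL=(L_{2i})_{i\in\IN}$, $\N=(N_{2i})_{i\in\IN}$ and $\K=(L_{2i-1})_{i\in\IN}$ arising from the exhausting sequence. As in the proof of Proposition~\ref{p_loc-homeo}, the regular-closedness of $L_{2i-2}$ gives $G_L(L_{2i-1})=G(L_{2i-1})$, so the relevant map is
\[
\rho : \cbox_{i\in\IN} G(N_{2i}) \times \cbox_{i\in\IN} G(L_{2i-1}) \to G_c,
\quad
\rho((f_i)_{i\in\IN},(g_i)_{i\in\IN}) = \lambda_\N((f_i)_{i\in\IN})\lambda_\K((g_i)_{i\in\IN}).
\]
Lemma~\ref{lem_WLSP} provides a local section $\sigma : \W \to \cbox_{i\in\IN}G(N_{2i})\times\cbox_{i\in\IN}G(L_{2i-1})$ of $\rho$ at $\id_M$.

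Next I would reinterpret this section as a section of the multiplication map $p$. For each $h\in\W$, writing $\sigma(h)=((f_i)_{i\in\IN},(g_i)_{i\in\IN})$, the factorization $h = f_1 g_1 f_2 g_2 \cdots$ together with the inclusions $f_i\in G(N_{2i})\subset G(M_{2i+1})$ and $g_i\in G(L_{2i-1})\subset G(M_{2i-1})$ shows that the interlaced tuple $(\id_M,\id_M,f_1,g_1,f_2,g_2,\dots)$ lies in $\cbox_{i\in\IN}G(M_i)$ and is sent by $p$ to $h$. Thus $s(h)=(\id_M,\id_M,f_1,g_1,f_2,g_2,\dots)$ defines a local section of $p$ at $\id_M$, and Lemma~\ref{lem_loc-sec_exh}\,(1) upgrades this to a local section at every point of $G_c$.

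The only genuine content to check is that Lemma~\ref{lem_WLSP} really delivers the same output as Lemma~\ref{l_LSP}\,(2), i.e.\ that the single modification $(\dagger)$ flagged in the proof of Lemma~\ref{lem_WLSP}---replacing $r_\LL$ by $r_\N$ and taking $\V_i$ to be a neighborhood of $i_{N_i}$ in $(\EE^G(N_i,M),\atau{}{N_i}{M})$---does not disturb the construction of $\rho$'s local section; this is exactly the point the earlier lemma asserts. Consequently the main (and only) obstacle has already been absorbed into Lemma~\ref{lem_WLSP}, and the proof here is purely a matter of quoting Proposition~\ref{p_loc-homeo}\,(2) with Lemma~\ref{lem_WLSP} substituted for Lemma~\ref{l_LSP}\,(2). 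I would therefore simply write that the proof is identical to that of Proposition~\ref{p_loc-homeo}\,(2), using Lemma~\ref{lem_WLSP} in place of Lemma~\ref{l_LSP}\,(2).
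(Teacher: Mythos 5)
Your proposal is correct and follows exactly the paper's own route: the paper's proof of Proposition~\ref{prop_loc-sec} consists precisely of the remark that one repeats the argument of Proposition~\ref{p_loc-homeo}\,(2) with Lemma~\ref{lem_WLSP} substituted for Lemma~\ref{l_LSP}\,(2). Your spelled-out version of the interlacing step $s(h)=(\id_M,\id_M,f_1,g_1,f_2,g_2,\dots)$ and the appeal to Lemma~\ref{lem_loc-sec_exh}\,(1) match the paper's argument in every detail.
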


\begin{proof}
The proof is completely same as that of Proposition~\ref{p_loc-homeo}\,(2), 
 except that we apply Lemma~\ref{lem_WLSP} instead of Lemma~\ref{l_LSP}\,(2). 
\end{proof}

%%%%%%%%%%%%%%%%%%%%%%%%%%%%%%%%%%%

\section{Homeomorphism and diffeomorphism groups of non-compact manifolds}
\label{Homeo-Diffeo}

In this section,
 we apply Propositions~\ref{p_loc-homeo} and \ref{prop_loc-sec}
 in Section \ref{Trans-gr-strong-top} to study local topological properties of
 homeomorphism groups and diffeomorphism groups of non-compact manifolds $M$
 endowed with the Whitney topology.
For homeomorphism groups of topological $n$-manifolds $M$, 
 in case $n = 2$ we can describe the local topological type
 of $\HH(M)$ and $\HH_c(M)$,
 while in case $n \geq 3$ the Homeomorphism Group Problem is still open and 
 we are restricted only to show the local contractibility of $\HH_c(M)$. 
On the other hand, for diffeomorphism groups of smooth $n$-manifolds,
 we can determine the local topological type of $\DD(M)$ and $\DD_c(M)$
 in every dimension $n$.

\subsection{Homeomorphism groups of non-compact $n$-manifolds}
\label{Homeo-gr} \mbox{}

Suppose $M$ is a $\sigma$-compact topological $n$-manifold
 possibly with boundary.
When $M$ is compact,
 the group $\HH(M)$ is known to be locally contractible
 (\cite{Cer} and \cite{EK}). 
In this subsection,
 we apply Proposition \ref{prop_loc-sec} to extend this result
 to the noncompact case.

\begin{proposition}\label{p_Homeo_LC}
For every $\sigma$-compact $n$-manifold $M$ possibly with boundary,
 the group $\HH_c(M)$ is locally contractible.
\end{proposition}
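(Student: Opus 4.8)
The plan is to deduce this from the general machinery developed in Section~\ref{Trans-gr-strong-top}, applied to the transformation group $G = \HH(M)$ acting on $M$. Concretely, I would verify that $\HH(M)$ has a strong topology with respect to an admissible collection $(\F, \atau{}{\ast}{M})$, then invoke Proposition~\ref{prop_loc-sec} to obtain a local section of the multiplication map $p : \cbox_{i\in\IN}\HH(M)(M_i) \to \HH_c(M)$, and finally apply Lemma~\ref{lem_loc-sec_exh}\,(2) together with Remark~\ref{loc-contr-TG} to conclude local contractibility. Since $M$ is a $\sigma$-compact locally compact metrizable space, it is paracompact, so by Proposition~\ref{W-top-group} the group $\HH(M)$ is a topological group acting continuously on embedding spaces, and by Proposition~\ref{compsupp}\,(1) the identity component lies in $\HH_c(M)$; the space $\HH_c(M)$ is paracompact by Proposition~\ref{para-H_c}. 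Thus the hypotheses needed to run the abstract argument are in place.

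First I would fix an exhausting sequence $(M_i, L_i, N_i)_{i\in\IN}$ for $M$ as in Subsection~\ref{E-seq}, choosing the $N_i$ to be compact handlebody-type neighborhoods so that each $L_{2i}$ and $N_{2i}$ is a compact locally flat $n$-submanifold (with corners) of $M$. The collection $\F$ would consist of such compact codimension-zero submanifolds, and $\atau{}{L}{M}$ would be the compact-open topology on the embedding space $\EE^{\HH(M)}(L,M) = \EE(L,M)$. I would then check the three defining conditions: that the natural injection $\HH(M)\to\HH(M)$ is trivially continuous (Definition~\ref{d_strong-top}\,(1)); that for a discrete sequence $(L_i)$ the map $\lambda_\LL : \square_i \HH(M)(L_i)\to\HH(M)(\bigcup_i L_i)$ is an open embedding, which follows because disjoint supports let one reconstruct the factors coordinatewise and the box topology matches the Whitney topology on the product of supports; and the continuity of $r_\LL : \HH(M)\to \square_i(\EE(L_i,M),\atau{}{L_i}{M})$ of Definition~\ref{d_strong-top-2}\,($\ast\ast$), which holds because restriction to each compact $L_i$ is continuous from the Whitney to the compact-open topology and the box product topology is controlled by the discreteness of the supports.

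The main obstacle, and the geometric heart of the matter, is verifying that each pair $(N_{2i}, L_{2i})$ has the weak local section property WLSP$_G$ in the sense of Definition~\ref{d_WLSP}: I need an open neighborhood $\V$ of the inclusion $i_{N_{2i}}$ in $(\EE(N_{2i},M),\atau{}{N_{2i}}{M})$ and a continuous map $s : \V \to \HH(M)(N_{2i})$ with $s(f)|_{L_{2i}} = f|_{L_{2i}}$. This is exactly where the deformation theorem for embeddings in topological manifolds of Edwards--Kirby~\cite{EK} enters: that theorem produces, for embeddings of a compact submanifold sufficiently close to the inclusion, a canonical isotopy supported in a prescribed neighborhood that realizes the embedding, and this isotopy can be made to depend continuously on the embedding. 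The passage from the Edwards--Kirby isotopy to a genuine continuous section $s$ landing in homeomorphisms fixing the complement of $N_{2i}$ is the delicate point, since one must control supports and continuity simultaneously in the compact-open topology; I expect this to occupy the bulk of the work.

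Once WLSP$_G$ is established for the even-indexed pairs with $N_{2i}\in\F$, Proposition~\ref{prop_loc-sec} gives a local section of $p$ at every point of $\HH_c(M)$. By Lemma~\ref{lem_loc-sec_exh}\,(2) it then suffices that each $\HH(M)(M_i) \approx \HH(M_i, \bd_M M_i)$ be locally contractible; but each such group is the homeomorphism group of a compact manifold rel boundary, which is locally contractible by the classical Cernavskii--Edwards--Kirby results (\cite{Cer},\cite{EK}). Since the small box product of locally contractible pointed spaces is locally contractible (Proposition~\ref{p_LC}) and the local section transports this to $\HH_c(M)$, we conclude via Remark~\ref{loc-contr-TG} that $\HH_c(M)$ is locally contractible, completing the proof of Proposition~\ref{p_Homeo_LC}.
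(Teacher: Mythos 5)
Your proposal follows essentially the same route as the paper's proof: establishing that $\HH(M)$ has a strong topology with respect to an admissible collection carrying the compact-open topology, deducing WLSP$_G$ for the pairs $(N_{2i},L_{2i})$ from the Edwards--Kirby deformation theorem, obtaining the local section of $p$ via Proposition~\ref{prop_loc-sec}, and concluding with Lemma~\ref{lem_loc-sec_exh}\,(2) and the local contractibility of $\HH(M_i,\bd_M M_i)$ from \cite{Cer} and \cite{EK}. The only cosmetic difference is that the paper takes $\F$ to be the collection of \emph{all} subsets of $M$ (so WLSP$_G$ is needed only for compact sets and compact neighborhoods, with no submanifold structure required there), reserving the submanifold condition on the $M_i$ for the final step where $\HH(M_i,\bd_M M_i)$ must be locally contractible.
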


We follow the formulation in Subsection~\ref{Trans-gr-strong-top-2}. 
The topological group $G = \HH(M)$ admits the natural action on $M$.
According to the convension of Subsection \ref{Gen-trans-gr},
 we use the following notations for $K, N \subset M$: 
\begin{gather*}
\HH_0(M) = G_0, \quad \HH_c(M) = G_c, \quad \HH(M, K) = G_K, \\
 \quad \HH(M, M \setminus N) = G(N), 
 \quad \HH(M, K \cup (M \setminus N)) = G_K(N), \\ 
 \quad \HH_c(M, K) = G_{K, c},
 \quad \HH_0(M, K) = (G_K)_0.
\end{gather*}
\noindent
For subspaces $K \subset L \subset N \subset M$, 
 the symbol $\EE_K(L,N)$ denotes the space of embeddings 
 $f : L \to N$ with $f|_K = \id_K$ endowed with the compact-open topology
 (Section \ref{Basic}). 
Recall that an embedding $f:L\to M$ is {\em proper}
 if $f^{-1}(\partial M)=L\cap\partial M$. 
Let $\EE^*_K(L,M)$ denote the subspace of $\mathcal E_K(L,M)$
 consisting of all proper embeddings.
Then $\EE^G_K(L,M) \subset \EE^*_K(L,M)$.

Let $\F$ denote the collection of all subsets of $M$ and 
 $\EE^G(L, M)$, $L \in \F$, be endowed
 with the compact-open topology $\atau{}{L}{M}$,
 which is admissible, that is,
 the action of $G = \HH(M)$ on $\EE^G(L, M)$ is continuous.
Then the transformation group $G$ on $M$ has a strong topology
 with respect to the admissible collection $(\F, \atau{}{\ast}{M})$,
 that is, for each discrete sequence $\LL = (L_i)_{i\in\IN}$ in $M$ 
 (i) the map $\lambda_{\LL} : \square_{i\in\IN}G(L_i) \to G(\bigcup_{i\in\IN}L_i)$
 is an open embedding and (ii) 
 the map $r_{\LL} : G \to \square_{i\in\IN}\EE^G(L_i,M)$ is continuous
 with respect to the compact-open topology $\atau{}{L_i}{M}$. 
Next, we shall inspect WLSP$_G$ of compact subsets of $M$. 

\begin{lemma}\label{lem_WLSP_Homeo} 
Suppose $L$ is a compact subset of $M$ and
 $N$ is a compact neighborhood of $L$ in $M$.
Then the pair $(N, L)$ has the WLSP$_G$
 (with respect to the topology $\atau{}{N}{M}$),
 that is, there exists an open neighborhood $\V$ of $i_N : N \subset M$ in
 $(\EE^G(N,M),\atau{}{N}{M})$ and a continuous map $s : \V \to G(N)$
 such that $s(f)|_L = f|_L$ for each $f \in \V$.
\end{lemma}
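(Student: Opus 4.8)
The plan is to extract the section directly from the Edwards--Kirby deformation theorem for embeddings in topological manifolds \cite{EK} (with \cite{Cer} supplying the bounded case), exploiting the fact that the weak local section property only demands agreement on $L$, while we are free to keep supports inside $N$. Since $N$ is a compact neighborhood of $L$, we have $L\subset U$ for the open set $U=\tint_M N$, and the Edwards--Kirby machinery applies to $U$ and produces ambient homeomorphisms supported in $U\subset N$; realizing $f|_L$ by such a homeomorphism is exactly what is required to land in $G(N)=\HH(M,M\setminus N)$.

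Concretely, first I would set $U=\tint_M N$, an open neighborhood of $L$ with $\cl_M U\subset N$, and fix an auxiliary open set $U'$ with $L\subset U'$ and $\cl_M U'\subset U$. The Edwards--Kirby deformation theorem, applied to the compact set $\cl_M U'$ sitting inside the open set $U$, furnishes an open neighborhood $\W$ of the inclusion $i_U$ in the space $\EE^G(U,M)$ of embeddings of $U$ (with the compact--open topology) together with a continuous map $s_0:\W\to\HH(M)$ such that $s_0(i_U)=\id_M$, $\;s_0(e)|_L=e|_L$ for every $e\in\W$, and $\supp s_0(e)\subset U$ for every $e\in\W$. This is precisely the assertion that embeddings of $U$ sufficiently close to the inclusion can be realized, continuously and canonically, by ambient homeomorphisms of $M$ supported in the prescribed neighborhood. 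For $\partial M\neq\emptyset$ one invokes the relative (boundary) version of \cite{EK}; here every embedding in sight is proper, so the boundary is respected.

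Next I would transport this back to $N$ through the restriction map. The map $R:\EE^G(N,M)\to\EE^G(U,M)$, $R(f)=f|_U$, is continuous for the compact--open topologies, since every compact subset of $U$ is a compact subset of $N$ (this is just the continuity recorded in condition $(\ast\ast)$ for $G=\HH(M)$). Hence $\V=R^{-1}(\W)$ is an open neighborhood of $i_N$ in $(\EE^G(N,M),\atau{}{N}{M})$, and $s(f):=s_0(f|_U)$ defines a continuous map $s:\V\to\HH(M)$. Then $s(i_N)=\id_M$, and $s(f)|_L=(s_0(f|_U))|_L=(f|_U)|_L=f|_L$ for every $f\in\V$ because $L\subset U$. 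Finally $\supp s(f)\subset U=\tint_M N$, so $s(f)=\id$ on $M\setminus N$, i.e.\ $s(f)\in\HH(M,M\setminus N)=G(N)$. Thus $s:\V\to G(N)$ is the required section and $(N,L)$ has WLSP$_G$ with respect to $\atau{}{N}{M}$.

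The reduction above is routine; the entire analytic content of the lemma is carried by the invoked deformation theorem. Accordingly, the only genuinely delicate points are (i) extracting from \cite{EK} the \emph{support control} --- that the realizing homeomorphisms can be taken supported inside the prescribed neighborhood $U$ rather than merely near $L$ --- together with the continuity of $e\mapsto s_0(e)$ in the compact--open topology, and (ii) the manifold--with--boundary case, which requires the properness of the embeddings and the boundary version of the theorem. Once these are in hand, composition with the restriction map $R$ yields the section with no further work.
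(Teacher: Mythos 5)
Your proposal is correct and follows essentially the same route as the paper: both reduce the lemma to the Edwards--Kirby deformation theorem, using it to realize embeddings near the inclusion by ambient homeomorphisms that agree with $f$ on $L$ and are supported in $\tint_M N$ (the paper isolates this as a separate lemma, obtaining the realization as $\eta(f)^{-1}\circ f$ extended by the identity, which is exactly the ``support control'' step you flag as the delicate point). The only cosmetic difference is that you first restrict to the open set $\tint_M N$ and pull back along the restriction map, whereas the paper works directly with embeddings of the compact set $N$.
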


This lemma follows from the next version of the deformation theorem
 for topological embeddings \cite{EK}.  

\begin{lemma} \label{lem_EK}
Suppose $C \subset D$ are compact subsets of $M$ with 
$C \subset \tint_M D$
 and $K \subset L$ are closed subsets of $M$ with $K \subset \tint_M L$.
Then there exists an open neighborhood $\V$ of
 the inclusion 
 $i : D \cup L \subset M$ in $\EE^\ast_L(D \cup L, M)$
 and a map $$\phi : \V \to \HH(M, K \cup (M \setminus \tint_M D))$$ such that
 $\phi(f)|_C = f|_C$ for each $f \in \V$ and $\phi(i) = \id_M$.
\end{lemma}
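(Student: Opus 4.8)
The plan is to deduce Lemma~\ref{lem_EK} directly from the Edwards--Kirby deformation theorem for topological embeddings \cite{EK}, in its relative and parametrized (canonical) form. The version I would invoke reads as follows: for compact sets $C \subset \tint_M D$ of a manifold $M$ and a closed set $Q \subset M$, there is a neighborhood $\W$ of the inclusion $i_D : D \subset M$ in the space $\EE^\ast(D, M)$ of proper embeddings (with the compact-open topology), together with a continuous map $\Lambda : \W \to \HH(M, M \setminus \tint_M D)$ such that $\Lambda(g)|_C = g|_C$, $\Lambda(i_D) = \id_M$, and moreover $\Lambda(g)$ fixes $Q$ pointwise whenever $g$ restricts to the identity on a fixed neighborhood of $Q$. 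The essential content of \cite{EK} is precisely that such an ambient extension exists, is supported in the prescribed open set $\tint_M D$, and can be chosen to depend continuously on $g$ with the inclusion sent to the identity; the hypothesis $C \subset \tint_M D$ provides the collar of room, between $C$ and the frontier of $D$, in which the interpolation back to the identity (and hence the support control) is carried out.

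First I would pass from the domain $D \cup L$ to $D$ by restriction. The restriction map $\EE^\ast_L(D \cup L, M) \to \EE^\ast(D, M)$, $f \mapsto f|_D$, is continuous, so $\V_0 = \{ f : f|_D \in \W \}$ is an open neighborhood of the inclusion $i$ in $\EE^\ast_L(D \cup L, M)$. The properness built into the $\ast$ is exactly what is needed to stay within the hypotheses of the boundary version of \cite{EK}: since each $f$ satisfies $f^{-1}(\partial M) = (D \cup L) \cap \partial M$, the restrictions $f|_D$ are proper embeddings of $D$ respecting $\partial M$, so the theorem applies on manifolds with boundary.

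It remains to arrange that the resulting homeomorphism fixes $K$. Here I would apply the relative form above with $Q = K$: because $K \subset \tint_M L$ is closed and every $f \in \EE^\ast_L(D \cup L, M)$ satisfies $f|_L = \id$, each $f$ (hence each $f|_D$) is the identity on the neighborhood $\tint_M L$ of $K$, so the $K$-fixing clause of the theorem is in force on all of $\V_0$. The one consistency point to check is that the two requirements $\phi(f)|_K = \id$ and $\phi(f)|_C = f|_C$ do not clash on $K \cap C$; this is automatic, since $K \subset L$ forces $f|_{K \cap C} = \id$. Setting $\phi(f) = \Lambda(f|_D)$ then yields a continuous map $\phi : \V \to \HH(M, K \cup (M \setminus \tint_M D))$ with $\phi(f)|_C = f|_C$ and $\phi(i) = \id_M$, after shrinking $\V_0$ to a neighborhood $\V$ on which all constraints hold.

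The substantive step --- and the one I expect to be the main obstacle --- is not this bookkeeping but the invocation of \cite{EK} itself: producing the canonical, continuously parametrized ambient extension $\Lambda$ with \emph{simultaneous} control of support in $\tint_M D$ and of the fixed set $K$. All of the genuine topology (the torus trick and the handle-by-handle straightening underlying the deformation theorem) is imported from \cite{EK}; the work here is to arrange the hypotheses so that a single application delivers support control, agreement on $C$, and fixing of $K$ at once.
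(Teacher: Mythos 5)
Your proposal is correct and follows essentially the same route as the paper: restrict $f$ to $D$, invoke the Edwards--Kirby deformation theorem in its relative form (the gap $K \subset \tint_M L$ together with $f|_L = \id$ activating the rel-$K$ clause), and extend by the identity outside $\tint_M D$. The only difference is that you quote \cite{EK} in a pre-packaged ambient-extension form, whereas the paper cites \cite[Theorem 5.1]{EK} literally --- a deformation $\eta$ of embeddings of $D$ that is the identity on a compact neighborhood $E$ of $C$, agrees with $f$ on $\bd_M D$ and preserves the image $f(D)$ --- and then manufactures the ambient homeomorphism as $\phi(f) = \eta(f|_D)^{-1}(f|_D)$ on $D$ and $\phi(f)=\id$ off $D$; the conditions on $\bd_M D$ and on the image are exactly what make this identity extension well defined, which is the one small piece of work your black box absorbs.
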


\begin{proof}
Take a compact neighborhood $E$ of $C$ in $\tint_M D$. 
Then, by \cite[Theorem 5.1]{EK} 
there exists an open neighborhood $\U$ of the inclusion $i_D : D \subset M$ 
in $\EE_{D \cap L}^\ast(D, M)$ 
and a map $\eta : \U \to \EE_{D \cap K}^\ast(D, M)$ such that 
$\eta(i_D) =  i_D$ and for each $f \in \U$ 
$$\mbox{
(a) $\eta(f) =  \id$ on $E$, \quad 
(b) $\eta(f) =  f$ on $\bd_M D$ \quad  and \quad 
(c) $\eta(f)(D) = f(D)$.}$$ 

Replacing $\U$ by a smaller one,
 we may assume that $f(C) \subset E$.
The required map $\phi$ is defined by
 $\phi(f)|_D = \eta(f|_D)^{-1}(f|_D)$ and $\phi(f) = \id$ on $M \setminus D$.
\end{proof}

Let $(M_i, K_i)_{i\in\IN}$ be an exhausting sequence of $M$,
 that is, 
 $M = \bigcup_{i\in\IN}M_i$,
 each $M_i$ is compact regular closed in $M$,
 $M_i \subset \tint_M M_{i+1}$
 and $K_i = M \setminus \tint_M M_i$.
Then $\HH(M,K_i) = G_{K_i} = G(M_i)$ for each $i \in \IN$.
Consider the multiplication map 
$$p : \cbox_{i\in\IN} \HH(M, K_i) \to \HH_c(M),
 \quad p((h_i)_{i\in\IN}) = h_1 h_2 h_3 \cdots.$$

\begin{lemma} \label{l_local_section}
The map $p : \cbox_{i\in\IN} \HH(M, K_i) \to \HH_c(M)$
 has a local section at $\id_M$. 
\end{lemma}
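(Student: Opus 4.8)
The plan is to recognize Lemma~\ref{l_local_section} as a direct application of Proposition~\ref{prop_loc-sec} from the abstract framework of Subsection~\ref{Trans-gr-strong-top-2}, once the requisite hypotheses have been verified for $G = \HH(M)$. The preceding material has already done the bulk of the setup: it has exhibited $\HH(M)$ as a transformation group with a strong topology with respect to the admissible collection $(\F, \atau{}{\ast}{M})$ (where $\F$ is all subsets of $M$ and each $\EE^G(L,M)$ carries the compact-open topology), and Lemma~\ref{lem_WLSP_Homeo} has established that every pair $(N,L)$ with $L$ compact and $N$ a compact neighborhood of $L$ enjoys WLSP$_G$. So the only gap to close is to fit the given exhausting sequence into the exact form demanded by Proposition~\ref{prop_loc-sec}.

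First I would refine the given exhausting sequence $(M_i, K_i)_{i\in\IN}$ into a full exhausting sequence $(M_i, L_i, N_i)_{i\in\IN}$ in the sense of Subsection~\ref{E-seq}, by setting $L_i = M_i \setminus \tint_M M_{i-1}$ and choosing compact sets $N_i$ with $L_i \subset \tint_M N_i$ and $N_i \cap N_j = \emptyset$ for $|i-j|\ge 2$. This is exactly the construction guaranteed to exist in Subsection~\ref{E-seq}, so it requires no new work. With this in hand, the hypothesis of Proposition~\ref{prop_loc-sec} reads: each $(N_{2i}, L_{2i})$ has WLSP$_G$ and $N_{2i}\in\F$. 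The membership $N_{2i}\in\F$ is automatic since $\F$ is the collection of \emph{all} subsets of $M$. The WLSP$_G$ condition follows immediately from Lemma~\ref{lem_WLSP_Homeo}, because each $L_{2i}$ is compact (being a closed subset of the compact set $M_{2i}$) and each $N_{2i}$ is, by construction, a compact neighborhood of $L_{2i}$.

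Having verified the hypotheses, Proposition~\ref{prop_loc-sec} yields directly that the multiplication map $p : \cbox_{i\in\IN} G(M_i) \to G_c$ has a local section at \emph{every} point of $G_c$, in particular at $\id_M$. Since $G(M_i) = G_{K_i} = \HH(M, K_i)$ and $G_c = \HH_c(M)$ under the notational dictionary set up at the start of Subsection~\ref{Homeo-gr}, this is precisely the assertion of the lemma.

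I do not expect a genuine obstacle here, since the lemma is essentially a packaging result; the real content lives upstream in Lemma~\ref{lem_WLSP_Homeo} (hence in the Edwards–Kirby deformation theorem of \cite{EK}) and in the abstract machinery of Proposition~\ref{prop_loc-sec}. The only point demanding minor care is the bookkeeping: one must check that the $L_i$ produced are genuinely compact and that the chosen $N_{2i}$ are compact neighborhoods, so that Lemma~\ref{lem_WLSP_Homeo} applies verbatim to the even-indexed pairs. This is routine given that the $M_i$ are compact and regular closed with $M_i \subset \tint_M M_{i+1}$.
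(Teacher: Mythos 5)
Your proposal is correct and follows essentially the same route as the paper: form the refined exhausting sequence $(M_i,K_i,L_i,N_i)_{i\in\IN}$, invoke Lemma~\ref{lem_WLSP_Homeo} to get WLSP$_G$ for the pairs $(N_{2i},L_{2i})$, and apply Proposition~\ref{prop_loc-sec}. The paper's proof is just a terser version of the same bookkeeping.
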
 

\begin{proof}
We can form an exhausting sequence $(M_i, K_i, L_i, N_i)_{i\in\IN}$,
 that is, $L_i = M_i \setminus \tint_M M_{i-1}$,
 $L_i \subset \tint_M N_i$ and $N_i \cap N_j = \emptyset$ if $|i - j| \geq 2$. 
Since each pair $(N_i, L_i)$ has WLSP$_G$ by Lemma \ref{lem_WLSP_Homeo},
 the conclusion follows from Proposition~\ref{prop_loc-sec}. 
\end{proof}

\begin{proof}[\bf Proof of Proposition~\ref{p_Homeo_LC}] 
By \cite[Theorem 0]{AP} (cf.\ \cite{KS}, \cite{Quinn}), 
there exists an exhausting sequence $(M_i)_{i \in \IN}$ for $M$ 
consisting of compact $n$-submanifolds of $M$ such that 
 $\bd_M M_i$ is a compact proper $(n-1)$-submanifold of $M$
 which is transversal to $\partial M$. 
Since $$G(M_i) = \HH(M, M \setminus \tint_M M_i)
 \approx \HH(M_i, \bd_M M_i)$$ and 
 the latter is locally contractible by \cite[Corollary 7.3]{EK}, 
 the conclusion follows from
 Lemmas \ref{l_local_section} and \ref{lem_loc-sec_exh}\,(2). 
\end{proof}

\subsection{Homeomorphism groups of non-compact surfaces} \mbox{}

In this subsection, we shall recognize the local topological type of
 the pair $(\HH(M,K), \HH_c(M,K))$ for a $2$-manifold $M$
 and a closed polyhedral subset $K\subset M$. 
Suppose $M$ is a $\sigma$-compact $2$-manifold possibly with boundary.
Then $M$ admits a combinatorial triangulation unique up to
 PL-homeomorphisms \cite{Moise}.
We fix a triangulation of $M$ and regard $M$ as a PL $2$-manifold.
A {\em subpolyhedron} of $M$ means a subpolyhedron
 with respect to this PL-structure. 
A $2$-submanifold of $M$ means 
a subpolyhedron $N$ of $M$ such that
 $N$ is a $2$-manifold and $\bd_M N$ is transverse to $\partial M$
 so that $\bd_M N$ is a proper 1-submanifold of $M$ and 
 $M \setminus \tint_M N$ is also a $2$-manifold.
Let $\HH^{PL}(M)$ denote the subgroup of $\HH(M)$
 consisting of PL-homeomorphisms with respect to the  PL-structure of $M$,
 and set $\HH^{PL}_c(M,K) = \HH^{PL}(M) \cap \HH_c(M,K).$  

Theorem~\ref{thm_main_1} and Proposition~\ref{prop_main_2} in Introduction 
 follow from the next theorem with taking $K = \emptyset$. 

\begin{theorem}\label{thm_surface}
Suppose $M$ is a non-compact $\sigma$-compact $2$-manifold 
possibly with boundary and
 $K\subsetneqq M$ is a subpolyhedron.
\begin{enumerate}
\item If $\cl_M(M\setminus K)$ is compact, then
 {\rm (i)} $\HH(M,K)$ is an $l_2$-manifold and hence
 {\rm (ii)} $\HH_0(M,K)$ is an open normal subgroup of $\HH(M,K)$. 
\item If $\cl_M(M\setminus K)$ is non-compact, then
\begin{itemize}
\item[(i)\ ]
$(\HH(M,K), \HH_c(M,K))$ is locally homeomorphic to 
 $(\square^\w l_2,  \cbox^\w l_2)$, 
 hence $\HH(M,K)$ is locally homeomorphic to $\square^\w l_2$
 and $\HH_c(M,K)$ is an $(l_2 \times \IR^\infty)$-manifold, 
\vskip 1mm 
\item[(ii)\,] $\HH_0(M,K)$ is an open normal subgroup of $\HH_c(M,K)$
 and thus 
$$\hspace*{10mm} \HH_c(M,K) \approx \HH_0(M,K) \times \M_c(M,K),$$ 
where \ $\M_c(M,K) = \HH_c(M,K)/\HH_0(M,K)$ (with the discrete topology).
\end{itemize}
\item The subgroup $\HH^{PL}_c(M,K)$ is homotopy dense in $\HH_c(M,K)$. 
\end{enumerate}
\end{theorem}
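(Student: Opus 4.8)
The plan is to prove Theorem~\ref{thm_surface} by reducing everything to the
local structure established in the preceding machinery, and then applying the
infinite-dimensional topology results cited for compact surfaces. The three
parts rest on one common foundation: for each compact PL $2$-submanifold
$N$ of $M$, the group $\HH(N,\bd_M N)$ (equivalently
$G(M_i) = \HH(M, M\setminus\tint_M M_i)$ along an exhausting sequence) is an
$l_2$-manifold. This is the two-dimensional input that is \emph{not} available
in higher dimensions: for a compact surface $N$, the combination of the results
of \cite{LM}, \cite{Geo} and \cite{T1} identifies $\HH(N)$, and hence the
restricted groups $\HH(N,\bd_M N)$, as $l_2$-manifolds. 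I would isolate this as
the key local fact before anything else.

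First I would prove part (1). When $\cl_M(M\setminus K)$ is compact, the group
$\HH(M,K)$ consists of homeomorphisms supported in a fixed compact set, so it is
homeomorphic to $\HH(N,\bd_M N)$ for a single compact $2$-submanifold $N$
containing $\cl_M(M\setminus K)$. By the local fact above this is an
$l_2$-manifold, giving (1)(i); since $l_2$-manifolds are locally connected,
$\HH_0(M,K)$ is open, and being the identity component it is a normal subgroup,
giving (1)(ii). For part (2), the plan is to invoke Proposition~\ref{p_loc-homeo}
with $G = \HH(M,K)$ acting on $\cl_M(M\setminus K)$: I would verify that
$(N_{2i}, L_{2i})$ has $\mathrm{LSP}_G$ using the $2$-dimensional deformation
theorem (Lemma~\ref{lem_EK} upgrades from WLSP to a genuine local section in
dimension two, since the relevant restriction map admits a section when the
target is an $l_2$-manifold). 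Proposition~\ref{p_loc-homeo}(1) then yields
$$(\HH(M,K), \HH_c(M,K)) \approx_\ell
 (\square,\cbox)_{i\in\IN} \widehat{\EE}^G(L_{2i}, M)
 \times (\square,\cbox)_{i\in\IN} G(L_{2i-1}).$$
Each factor $\widehat{\EE}^G(L_{2i},M)$ and each $G(L_{2i-1})$ is an
$l_2$-manifold, hence locally $l_2$; by Proposition~\ref{p_LC} and the
Mankiewicz identification $l_2\times\IR^\infty \approx \cbox^\w l_2$, the small
box product is locally $\cbox^\w l_2$ and the full box product is locally
$\square^\w l_2$. This establishes (2)(i). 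For (2)(ii) I would invoke
Proposition~\ref{prop_Homeo_LC} (local contractibility of $\HH_c(M,K)$, proved
in the non-compact setting via Lemma~\ref{l_local_section} and
Lemma~\ref{lem_loc-sec_exh}(2)) to conclude that $\HH_0(M,K)$ is open, whence the
factorization through the discrete mapping class group follows exactly as in the
Introduction.

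For part (3), the plan is to apply Lemma~\ref{lem_loc-sec_exh}(3) with
$H = \HH^{PL}(M)$. Since $\HH_c(M,K)$ is paracompact (indeed an
$\aleph_0$-space by Proposition~\ref{prop_aleph_0}), it suffices to check that
$H(M_i) = \HH^{PL}(M,K_i)$ is homotopy dense in $G(M_i) = \HH(M,K_i)$ for each
$i$. This reduces to the compact case: the PL-homeomorphism group of a compact
$2$-manifold (rel.\ boundary) is homotopy dense in its full homeomorphism
group, which is part of the same compact-surface package of
\cite{LM},\cite{Geo},\cite{T1} and the approximation theory underlying it. I
would then assemble the factorwise homotopy densities through
Proposition~\ref{prop_HD} and the local section to transport the absorbing
homotopy down to $\HH_c(M,K)$ via $\phi_t = p\psi_t s$, exactly as in the proof
of Lemma~\ref{lem_multi-section}(3).

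The main obstacle I anticipate is verifying the genuine local section property
$\mathrm{LSP}_G$ (not merely WLSP) for the pairs $(N_{2i}, L_{2i})$ in part~(2),
since Proposition~\ref{p_loc-homeo} requires $\mathrm{LSP}_G$ together with
$L_{2i}\in\F$, and the passage from the embedding-deformation theorem of
\cite{EK} to a continuous local section of the restriction map is precisely the
step that fails in dimensions $n\geq 3$. In dimension two this must be extracted
from the $l_2$-manifold structure of the embedding spaces
$\EE^G(L_{2i},M)$ and the fibration-type properties of the restriction map
$r : G_K(N)\to \EE^G_K(L,M)$; establishing that $r$ is a principal bundle (via
Remark~\ref{rem_adm-top}(ii)) is where the real two-dimensional work lies.
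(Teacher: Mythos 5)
Your overall architecture matches the paper's: part (1) by reduction to a single compact $2$-submanifold $N\supset\cl_M(M\setminus K)$ and the compact-surface $l_2$-manifold theorem; part (2) via Proposition~\ref{p_loc-homeo} applied to $G_K=\HH(M,K)$ along an exhausting sequence of compact $2$-submanifolds; part (3) via Lemma~\ref{lem_loc-sec_exh}\,(3) after checking homotopy density of the PL subgroups factorwise on the compact pieces. Parts (1) and (3) are essentially correct as you describe them (the paper attributes the compact-case PL homotopy density to \cite{GH} and \cite{LM} via Theorem~\ref{thm_LM}\,(ii), but that is a matter of citation, not substance).

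The gap is in your justification of LSP$_{G_K}$ for the pairs $(N_{2i},L_{2i})$, which you correctly identify as the crux but then resolve incorrectly. Lemma~\ref{lem_EK} (Edwards--Kirby) yields only WLSP, in every dimension, and it does not ``upgrade'' to a genuine local section of $r:G_K(N)\to\EE^G_K(L,M)$ merely because the target is an $l_2$-manifold: a nice target says nothing about the existence of a continuous section of a surjection. Your fallback --- extracting LSP ``from the $l_2$-manifold structure of the embedding spaces'' --- runs the logic backwards. The genuinely two-dimensional input is Theorem~\ref{thm_bdl_surface} (from \cite{LM}, \cite{Yag}, \cite{Y2}): the restriction map $\HH(M,K)\to\EE^*_K(L,M)$ admits a local section at $i_L$, constructed by explicit geometric/PL methods on surfaces. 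It is this bundle theorem, applied to the data $(K\cup L_{2i},\,K,\,M\setminus\mint N_{2i})$, that simultaneously yields LSP$_{G_K}$, the identification of the quotient topology on $\EE^{G_K}(L_{2i},M)$ with the compact-open topology (Remark~\ref{rem_LSP_G}\,(0)), and --- together with Theorem~\ref{thm_LM} --- the $l_2$-manifold structure of the factors $\EE^{G_K}(L_{2i},M)$ and $G_K(L_{2i-1})$ needed to recognize the box/small box product pair as locally $(\square^\w l_2,\cbox^\w l_2)$. Without citing or reproving that section theorem, both the hypothesis of Proposition~\ref{p_loc-homeo} and the identification of the factors in (2)(i) are unsupported. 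A further small point: one must also observe that $L_i\not\subset K$ for infinitely many $i$, so that infinitely many factors are nontrivial $l_2$-manifolds rather than singletons.
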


We keep the notations for $G = \HH(M)$ listed in Subsection \ref{Homeo-gr}. 
For $K \subset L\subset M$, the symbols 
$$\EE_K(L,M) \supset \EE^*_K(L,M) \supset \EE^\star_K(L,M) = \EE^G_K(L,M)$$
 denote the space of embeddings and the subspaces of proper embeddings
 and extendable embeddings, respectively. 
These spaces are endowed with the compact-open topology. 

To prove Theorem~\ref{thm_surface},
 we use the next two theorems besides Proposition~\ref{p_loc-homeo}. 

\begin{theorem}\label{thm_LM} {\rm(\cite{GH}, \cite{LM}, cf.\ \cite{Y2})} 
Suppose $M$ is a compact 
$2$-manifold possibly with boundary
 and $K \subsetneqq M$ is a subpolyhedron.
Then, {\rm (i)} $\HH(M,K)$ is an $l_2$-manifold
 and {\rm (ii)} $\HH^{PL}(M,K)$ is homotopy dense in $\HH(M,K)$. 
\end{theorem}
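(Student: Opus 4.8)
The plan is to prove (i) by combining a point-set description of $G:=\HH(M,K)$ with two external inputs --- the Luke--Mason ANR theorem and the Dobrowolski--Toru\'nczyk characterization of Hilbert-manifold groups --- and to prove (ii) by upgrading the classical $2$-dimensional PL-approximation theorem to a parametrized, relative statement.

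First I would record the elementary properties of $G$. Since $M$ is compact, the Whitney topology on $\HH(M)$ coincides with the compact-open topology, under which $\HH(M)$ is a separable completely metrizable topological group (with the complete metric $d(f,g)=\sup_{x}d(f(x),g(x))+\sup_{x}d\bigl(f^{-1}(x),g^{-1}(x)\bigr)$). The subset $G=\{h\in\HH(M):h|_{K}=\id\}$ is a closed subgroup, hence again a separable completely metrizable topological group. Moreover $G$ is \emph{not} locally compact: as $K\subsetneqq M$ is a proper closed subpolyhedron, the open set $M\setminus K$ is a nonempty $2$-manifold and contains an open disk $B$, and the homeomorphisms of $M$ supported in $B$ already form an infinite-dimensional subgroup of $G$, so no neighborhood of $\id_{M}$ is compact.

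The substantial geometric content of (i), and the \textbf{main obstacle}, is that $G$ is an ANR. I would invoke the construction of Luke--Mason \cite{LM} (the general framework that detects the Hilbert-manifold structure in such groups being developed in \cite{GH}): starting from a handle, equivalently triangulation, decomposition of $M$ adapted to $K$, together with the local contractibility of homeomorphism groups of compact manifolds (\cite{Cer}, \cite{EK}), one builds over a neighborhood of each homeomorphism a canonical contraction that is controlled simultaneously on all handles; the resulting local neighborhoods are then shown to be dominated by and to retract onto manifold-like pieces, giving the local ANR property. The relative case with $K\neq\emptyset$ (cf.\ \cite{Y2}) is handled by carrying $K$ along in the decomposition and performing every deformation rel $K$. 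Granting this, $G$ is a separable completely metrizable ANR that is a topological group and is not locally compact, so the Dobrowolski--Toru\'nczyk theorem (a separable completely metrizable ANR admitting a topological group structure is an $l_{2}$-manifold unless it is locally compact) yields at once that $G$ is an $l_{2}$-manifold, proving (i).

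For (ii) the goal is a homotopy $\phi_{t}:G\to G$ with $\phi_{0}=\id$ and $\phi_{t}(G)\subset\HH^{PL}(M,K)$ for $t\in(0,1]$. The starting point is the classical fact that in dimension $2$ every homeomorphism is uniformly approximable by PL homeomorphisms and every triangulation is unique up to PL homeomorphism (Rad\'o, Moise \cite{Moise}), so $\HH^{PL}(M,K)$ is dense in $G$. The real work is to make this approximation \emph{continuous} in the homeomorphism and \emph{parametrized} by $t$, and to keep it fixed on $K$: I would produce a PL-straightening map that tends to the identity as $t\to0$, obtained by straightening $h$ against a fixed fine triangulation of $M$ rel $K$ whose mesh is governed by $t$, and then set $\phi_{t}(h)$ to be the resulting PL homeomorphism. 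The continuity and the rel-$K$ property of this straightening --- which is where the two-dimensional PL machinery (regular neighborhoods and uniqueness of PL structures on surfaces) is essential --- constitute the secondary obstacle; once it is in place, the homotopy $\phi_{t}$ delivers the required homotopy density.
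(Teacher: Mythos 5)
A preliminary remark that frames the whole comparison: the paper does not prove this statement at all. Theorem~\ref{thm_LM} is imported from the literature, the parenthetical citations (\cite{GH}, \cite{LM}, cf.\ \cite{Y2}) being its entire ``proof''; the Introduction indicates that the $l_2$-manifold part is classically obtained by combining \cite{LM}, \cite{Geo} and \cite{T1}. So your proposal must be measured against the cited sources. For part (i) your assembly is correct and is a genuine (arguably cleaner) alternative to the route the paper points to: you take the Luke--Mason ANR theorem \cite{LM} as the geometric black box (relative case rel $K$, cf.\ \cite{Y2}), observe that $\HH(M,K)$ is a separable, completely metrizable, non-locally-compact topological group, and invoke the Dobrowolski--Toru\'nczyk theorem (a separable completely metrizable ANR admitting a group structure is either locally compact, hence a Lie group, or an $l_2$-manifold), whereas the classical route combines \cite{LM} with Geoghegan's stability \cite{Geo} and Toru\'nczyk's factor theorem \cite{T1}. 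Both work; yours trades two references for one later theorem. Two blemishes: the ``general framework that detects the Hilbert-manifold structure'' is not in \cite{GH} (that paper concerns PL homeomorphism spaces, i.e.\ part (ii)); and non-local-compactness does not follow from the subgroup supported in a disk being ``infinite-dimensional'' --- you need, inside every neighborhood of $\id_M$, a sequence with no convergent subsequence (e.g.\ homeomorphisms supported in a small disk converging uniformly to a non-injective map). This is routine but should be said correctly.

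Part (ii) is where there is a genuine gap. The homotopy density of $\HH^{PL}(M,K)$ in $\HH(M,K)$ \emph{is} the Geoghegan--Haver theorem --- exactly what the paper cites \cite{GH} for --- and your proposal amounts to re-proving it by exhibiting a PL-straightening $\phi_t(h)$, jointly continuous in $(h,t)$ and fixed on $K$, obtained by ``straightening $h$ against a fixed fine triangulation whose mesh is governed by $t$.'' No such construction is given, and the naive one fails: sending the vertices of a fine triangulation to their $h$-images and extending affinely over simplices does not in general produce an injective map, and there is no canonical, continuous-in-$h$ way to repair injectivity; producing such a repair is precisely the hard content of the theorem, not a detail to be filled in. This is also not how the cited literature argues: density of PL homeomorphisms (Rad\'o--Moise \cite{Moise}) is upgraded to homotopy density by parametrized PL approximation over \emph{finite-dimensional compact families} of homeomorphisms --- i.e.\ by killing the relative homotopy groups $\pi_n\bigl(U, U\cap \HH^{PL}(M,K)\bigr)$ for open sets $U$ --- combined with ANR theory in the spirit of Haver and Toru\'nczyk, rather than by any globally continuous canonical straightening. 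As written, your part (ii) rests on an unproved assertion equivalent to the statement being proved; the fix is either to cite \cite{GH} (with \cite{Y2} for the relative case), as the paper does, or to supply the finite-dimensional parametrized approximation argument.
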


The following is a slight extension of the results in \cite{Yag} and \cite{Y2}.
Note that the assertion was verified in \cite{LM}
 in the most important case that $K = \emptyset$ and
 $L$ is either a proper arc, an orientation-preserving circle or a compact 2-submanifold of $M$. 

\begin{theorem}\label{thm_bdl_surface} 
Suppose $M$ is a $2$-manifold possibly with boundary
 and $K \subset L$ are two subpolyhedra of $M$ 
 such that $\cl_M(L \setminus K)$ is compact.
\begin{enumerate}
\item 
For every closed subset $C$ of $M$
 with $C \cap \cl_M(L \setminus K) = \emptyset$,
 the restriction map
$$r :\HH(M,K) \to \mathcal E^*_K(L,M),
 \quad r(h) = h|_L$$
 has a local section $s : \U \to \HH_0(M,K\cup C) \subset \HH(M,K)$
 at the inclusion $i_L : L \subset M$.
\item
The restriction map $r :\HH(M,K) \to \mathcal E^\star_K(L,M)$
 is a principal $\HH(M,L)$-bundle.
\item 
\begin{itemize}
\item [(i)\,] $\mathcal E^\star_K(L,M)$ is an open neighborhood
 of the inclusion $i_L$ in $\mathcal E^*_K(L,M)$. 
\item[(ii)] The spaces $\mathcal E^*_K(L,M)$ and $\mathcal E^\star_K(L,M)$ 
 are $l_2$-manifolds if $\dim(L\setminus K) \ge 1$.
\end{itemize}
\end{enumerate}
\end{theorem}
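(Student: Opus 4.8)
The plan is to localize everything to a compact PL $2$-submanifold and then feed the resulting compact situation into the surface-specific deformation and classification results for embedding spaces. Since $\cl_M(L\setminus K)$ is a compact subpolyhedron disjoint from the closed set $C$, I would first fix a compact PL $2$-submanifold $N\subset M$ with $\cl_M(L\setminus K)\subset\tint_M N$ and $N\cap C=\emptyset$ (a sufficiently small regular neighborhood). Any proper embedding $f\in\EE^*_K(L,M)$ close to $i_L$ satisfies $f|_K=\id$ and $f(\cl_M(L\setminus K))\subset\tint_M N$, so the only genuine perturbation occurs inside $N$. To build the section for (1) I would invoke the parametrized deformation theorem for topological embeddings into surfaces, namely the dimension-$2$ refinement of the theorem underlying Lemma~\ref{lem_EK} (see \cite{EK}, \cite{LM}, \cite{Yag}, \cite{Y2}), which, unlike the general-dimension statement, yields an \emph{exact} extension: for $f$ in a small neighborhood $\U$ of $i_L$ one obtains a homeomorphism $s(f)$ supported in $\tint_M N$, depending continuously on $f$, with $s(f)|_L=f|_L$ and $s(i_L)=\id_M$. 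Because $s(f)$ is supported in $N$ it fixes $K\cup C$, and the deformation simultaneously supplies an isotopy from $\id_M$ to $s(f)$ rel $K\cup C$; hence $s(f)\in\HH_0(M,K\cup C)$, proving (1).

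For (2) I would note that the image of $r\colon\HH(M,K)\to\EE^*_K(L,M)$ is precisely $\EE^\star_K(L,M)$ and that $\HH(M,K)$ acts on $\EE^\star_K(L,M)$ transitively with stabilizer $\HH(M,L)$ at $i_L$. Restricting the section from (1) to $\U\cap\EE^\star_K(L,M)$ gives a local section of $r$ at $i_L$ for the (admissible) compact-open topology; translating it by the continuous group action produces local sections at every point, so by the principle recorded in Remark~\ref{rem_adm-top}\,(ii) the map $r$ is a principal $\HH(M,L)$-bundle. Part (3)(i) falls out of the same section: since $f=r(s(f))$ for all $f\in\U$, we have $\U\subset\EE^\star_K(L,M)$, and left composition by any $g\in\HH(M,K)$, which is a self-homeomorphism of $\EE^*_K(L,M)$ preserving $\EE^\star_K(L,M)$, carries $\U$ onto an open neighborhood of $g|_L$; as these cover $\EE^\star_K(L,M)$, it is open in $\EE^*_K(L,M)$.

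For (3)(ii) I would use the localization to $N$ to identify $\EE^*_K(L,M)$, up to homeomorphism, with the space of proper embeddings of the compact polyhedron $\cl_M(L\setminus K)$ into the compact surface $N$ rel its frontier; when $\dim(L\setminus K)\ge 1$ this is an $l_2$-manifold by the classification results of \cite{LM}, \cite{Yag}, \cite{Y2} (the dimension hypothesis is what prevents the space from collapsing to a finite-dimensional configuration space, so that the topological characterization of $l_2$-manifolds applies). Then $\EE^\star_K(L,M)$ is an $l_2$-manifold as an open subset, by (3)(i).

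The main obstacle is the parametrized deformation theorem in step (1): producing an ambient homeomorphism supported in $N$ that extends $f|_L$ exactly, fixes $K$, depends continuously on $f$, and is the identity at $f=i_L$. For a general subpolyhedron $L$ this does not follow from the single-embedding isotopy extension theorem; it requires the surface-specific machinery of \cite{LM}, \cite{Yag}, \cite{Y2}, obtained by decomposing $\cl_M(L\setminus K)$ into the model pieces (proper arcs, orientation-preserving circles, and compact $2$-submanifolds) treated there and controlling their interaction with $K$. Once this local section is in hand, the statements (2), (3)(i) and (3)(ii) are comparatively formal.
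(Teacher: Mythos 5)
Your proposal is correct and follows essentially the same route as the paper: the paper's proof is precisely the reduction you describe --- choose a regular neighborhood $N$ of $\cl_M(L\setminus K)$ in $M\setminus C$ and apply the compact case, established in \cite[Proposition 4.2]{Yag} and \cite[Theorem 2.1]{Y2}, to the data $(N,(L\cap N)\cup\bd_M N,(K\cap N)\cup\bd_M N)$. The extra detail you supply about the mechanism of the compact case (the parametrized deformation theorem and the $l_2$-manifold recognition from \cite{LM}, \cite{Yag}, \cite{Y2}) is exactly what those cited results contain, so nothing is missing.
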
 

\begin{proof}
In \cite[Proposition 4.2]{Yag} and \cite[Theorem 2.1]{Y2}, 
 we verified the case where $L$ is compact. 
The general case is obtained
 if we choose a regular neighborhood $N$ of $\cl_M(L \setminus K)$
 in $M \setminus C$ and apply the compact case to the data 
 $$(N, (L \cap N) \cup \bd_M N, (K \cap N) \cup \bd_M N).$$
\vskip -6.5mm
\end{proof}

\begin{proof}[\bf Proof of Theorem~\ref{thm_surface}] 
Consider the subgroup $G_K = \HH(M, K)$ of $G = \HH(M)$ and 
 the collection $\F$ of all compact subpolyhedra of $M$. 
Then the transformation group $G_K$ on $M$ has a strong topology
 with respect to $\F$. 
Choose any exhausting sequence $(M_i, L_i, N_i)_{i\in\IN}$ for $M$
 such that each $M_i$ and $N_i$ are compact $2$-submanifolds of $M$. 
Recall that $L_i = M_i \setminus \tint_M M_{i-1} \subset \tint_M N_i$.
Then, for each $i \in \IN$ 
\begin{itemize}
\item[(i)\ ] $L_{2i} \in \F$, \quad
 (ii) $(N_{2i}, L_{2i})$ has LSP$_{G_K}$ \quad and 
\item[(iii)] the quotient topology on the space 
$$\hspace*{10mm} \EE^{G_K}(L_{2i}, M) \cong \EE^G_K(K \cup L_{2i}, M)
 = \EE^\star_K(K \cup L_{2i}, M)$$   
 coincides with the compact-open topology. 
\end{itemize}
The assertion (ii) is verified by applying Theorem~\ref{thm_bdl_surface}\,(1) 
 to the data $(L, K, C) = (K \cup L_{2i}, K, M \setminus \mint\,N_{2i})$ and 
 (iii) follows from Remark~\ref{rem_LSP_G}\,(0). 
Hence, it follows from Proposition~\ref{p_loc-homeo} that  
\begin{itemize}
\item[$(\ast)_1$] $(G_K, G_{K,c}) \approx_\ell
 \big(\square,\cbox)_{i\in\IN} \EE^{G_K}(L_{2i},M)
  \times (\square, \cbox)_{i\in\IN} G_K(L_{2i-1})$,  
\item[$(\ast)_2$]  the multiplication map
 $p : \cbox_{i\in\IN} G_K(M_i) \to G_{K, c}$ 
 has a local section at any point of $G_{K, c}$. 
\end{itemize}

(1)
Take a compact submanifold $N$ of $M$
 such that $\cl_M(M \setminus K) \subset N$.
Then $\HH(M,K)$ is identified with $\HH(N,(N \cap K) \cup \bd_M N)$ and 
 the assertion (1) is the direct consequence of Theorem~\ref{thm_LM}. 
\smallskip 

(2) Since 
$G_K(L_{2i-1}) = \HH(M, K \cup (M \setminus \tint_M L_{2i-1})  
\approx \HH(L_{2i-1}, (K \cap L_{2i-1}) \cup \bd_M L_{2i-1})$ 
and $L_i \not\subset K$ for infinitely many $i\in\IN$, 
 by Theorem~\ref{thm_LM}\,(i) and Theorem~\ref{thm_bdl_surface}\,(3)(ii) 
 we have
$$\big(\square, \cbox)_{i\in\IN} \EE^{G_K}(L_{2i}, M)
 \times (\square, \cbox)_{i\in\IN} G_K(L_{2i-1})
 \approx_\ell (\square^\w l_2,\cbox^\w l_2).$$ 
Hence, the assertion (2) follows from $(\ast)_1$.

(3) Let $H = \HH^{PL}(M)$ and consider the subgroup $H_K$ of $G_K$.  
Since $G_c$ is paracompact, so is $G_{K, c}$. 
Since 
\begin{align*}
(G_K(M_i), H_K(M_i)) &= (\HH(M, K \cup K_i), \HH^{PL}(M, K \cup K_i)) \\
& \approx 
(\HH(M_i, (M_i \cap K) \cup \bd_M M_i),
 \HH^{PL}(M_i, (M_i \cap K) \cup \bd_M M_i)), 
\end{align*}
 Theorem~\ref{thm_LM}\,(ii) implies that $H_K(M_i)$ is HD in $G_K(M_i)$. 
By $(\ast)_2$,
 we can apply Lemma~\ref{lem_loc-sec_exh}\,(3)
 to assert that $\HH^{PL}_c(M,K) = H_{K, c}$ is HD in $\HH_c(M,K) = G_{K, c}$.
\end{proof}

%%%%%%%%%%%%%%%%%%%%%%%%%%%%%%%%%%%

\subsection{Diffeomorphism groups of non-compact smooth manifolds} \mbox{}

In this subsection,
 we study diffeomorphism groups of non-compact smooth manifolds
 endowed with the Whitney $C^\infty$-topology.
Suppose $M$ is a smooth $\sigma$-compact $n$-manifold without boundary.
Let $\DD(M)$ denote the group of diffeomorphisms of $M$ endowed with
 the Whitney $C^\infty$-topology
 (= the very-strong $C^\infty$-topology in \cite{Illman}).

The topological group $G = \DD(M)$ admits the natural action on $M$.
Similarly to the previous subsection,
 we use the following notations for $K, N \subset M$:
\begin{gather*}
\DD_0(M) = G_0, \ \ 
\DD_c(M) = G_c, \ \ 
\DD(M, K) = G_K, \\ 
\DD(M, M \setminus N) = G(N), \ \ 
\DD(M, K \cup (M \setminus N)) = G_K(N), \\ 
\DD_c(M, K) = G_{K,c}, \ \
\DD_0(M, K) = (G_K)_0.
\end{gather*}
Since the inclusion map $\DD(M) \subset \HH(M)$ is continuous,
 it follows from Proposition~\ref{prop_tr-gr_compsupp}
 that $\DD_0(M) \subset \DD_c(M)$. 
The quotient group $\M_c^\infty(M) = \DD_c(M)/\DD_0(M)$
 (with the quotient topology) is called the mapping class group of $M$.
 
Let $\F^\infty_c$ denote the collection of all compact smooth $n$-submani\-folds
 of $M$.
For $L \in \F^\infty_c$ and a subset $K \subset L$,
 let $\EE_K^\infty(L, M)$ denote the space of $C^\infty$-embeddings
 $f : L \to M$ with $f|_K = \id_K$ and
 let $\EE_K^{\infty, \star}(L, M) = \EE_K^G(L, M)$
 (the space of extendable $C^\infty$-embeddings).
These spaces are endowed with the compact-open $C^\infty$-topology 
 (and its subspace topology).
There is a natural restriction map
$$r : \DD(M, K) \to \EE^\infty_K(L, M), \quad r(h) = h|_L.$$

The following is the main result of this subsection.

\begin{theorem}\label{t_diff} 
Suppose $M$ is a non-compact $\sigma$-compact smooth $n$-manifold
 without boundary.
 \begin{enumerate}
\item $(\DD(M), \DD_c(M)) \approx_\ell (\square^\w l_2, \cbox^\w l_2)$.
Hence,
$\DD(M) \approx_\ell \square^\w l_2$ and
 $\DD_c(M)$ is an $( l_2 \times \IR^\infty)$-manifold.
\item
$\DD_0(M)$ is an open normal subgroup of $\DD_c(M)$ and 
 $$\DD_c(M) \approx \DD_0(M) \times \M_c^\infty(M).$$ 
\end{enumerate}
\end{theorem}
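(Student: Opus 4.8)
The plan is to run the proof of Theorem~\ref{thm_surface} in the smooth category, replacing each surface-specific input by its $C^\infty$-analogue and feeding everything into Proposition~\ref{p_loc-homeo}. Set $G=\DD(M)$ with its natural action on $M$ and take $\F=\F^\infty_c$, the collection of compact smooth $n$-submanifolds of $M$, each $\EE^G(L,M)$ ($L\in\F$) carrying the compact-open $C^\infty$-topology. First I would check that $G$ has a strong topology with respect to $\F$: the inclusion $\DD(M)\subset\HH(M)$ is continuous, and for a discrete sequence $\LL=(L_i)_{i\in\IN}$ of compact submanifolds the map $\lambda_\LL:\square_{i\in\IN}G(L_i)\to G(\bigcup_{i\in\IN}L_i)$ is an open embedding (diffeomorphisms supported in pairwise disjoint neighborhoods of the $L_i$ combine freely in the Whitney $C^\infty$-topology) while $r_\LL:G\to\square_{i\in\IN}\EE^G(L_i,M)$ is continuous for the compact-open $C^\infty$-topology. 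I then fix an exhausting sequence $(M_i,L_i,N_i)_{i\in\IN}$ for $M$ in which every $M_i$ and $N_i$ is a compact smooth $n$-submanifold (e.g.\ built from regular values of a proper exhaustion function), with $L_i=M_i\setminus\tint_M M_{i-1}\subset\tint_M N_i$ and $N_i\cap N_j=\emptyset$ for $|i-j|\ge2$.

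Next I would verify the hypotheses of Proposition~\ref{p_loc-homeo} for the even-indexed data. Clearly $L_{2i}\in\F$, and $(N_{2i},L_{2i})$ has LSP$_G$: this is the smooth counterpart of Theorem~\ref{thm_bdl_surface}\,(1), namely that the restriction map $r:\DD(M)\to\EE^\infty(L_{2i},M)$ admits, at the inclusion $i_{L_{2i}}$, a local section with values in $\DD(M,M\setminus\tint_M N_{2i})$; this follows from the smooth isotopy extension theorem together with the Palais--Cerf local triviality of the restriction map (cf.\ \cite{Cer}). By Remark~\ref{rem_LSP_G}\,(0) the quotient topology on $\EE^G(L_{2i},M)$ is then the compact-open $C^\infty$-topology, so Proposition~\ref{p_loc-homeo}\,(1) gives
$$(\ast)\qquad (G,G_c)\ \approx_\ell\ (\square,\cbox)_{i\in\IN}\widehat{\EE}^G(L_{2i},M)\times(\square,\cbox)_{i\in\IN}G(L_{2i-1}).$$
To finish part~(1) I would identify the factors near their distinguished points. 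The factor $\widehat{\EE}^G(L_{2i},M)=\EE^{\infty,\star}(L_{2i},M)$ is open in the embedding space $\EE^\infty(L_{2i},M)$, which is a separable Fr\'echet manifold modeled on an infinite-dimensional Fr\'echet space and hence an $l_2$-manifold by the Anderson--Kadec theorem; the factor $G(L_{2i-1})=\DD(M,M\setminus\tint_M L_{2i-1})\approx\DD(L_{2i-1},\bd_M L_{2i-1})$ is the diffeomorphism group of a compact manifold rel.\ its boundary, again an $l_2$-manifold. Consequently
$$(\square,\cbox)_{i\in\IN}\widehat{\EE}^G(L_{2i},M)\times(\square,\cbox)_{i\in\IN}G(L_{2i-1})\ \approx_\ell\ (\square^\w l_2,\cbox^\w l_2),$$
and together with $(\ast)$ this yields $(\DD(M),\DD_c(M))\approx_\ell(\square^\w l_2,\cbox^\w l_2)$; in particular $\DD(M)\approx_\ell\square^\w l_2$ and $\DD_c(M)$ is an $(l_2\times\IR^\infty)$-manifold.

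For part~(2) I would argue as in Theorem~\ref{thm_surface}\,(2)(ii). By part~(1) the group $\DD_c(M)$ is an $(l_2\times\IR^\infty)$-manifold, hence locally contractible and in particular locally connected, so its identity component $\DD_0(M)$ (which lies in $\DD_c(M)$ by Proposition~\ref{prop_tr-gr_compsupp}) is an open normal subgroup. Thus $\M_c^\infty(M)=\DD_c(M)/\DD_0(M)$ carries the discrete topology, and choosing one representative in each coset exhibits the homeomorphism $\DD_c(M)\approx\DD_0(M)\times\M_c^\infty(M)$.

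The main obstacle is the LSP step. What is needed is not merely that $r:\DD(M)\to\EE^\infty(L_{2i},M)$ is locally trivial, but that the local section can be chosen to take values in diffeomorphisms supported inside the prescribed neighborhood $N_{2i}$, with compatible control near $\bd_M N_{2i}$; only then do the sections produced by Proposition~\ref{p_loc-homeo} telescope correctly into the tower $\cbox_{i\in\IN}G(M_i)$. Secondarily, one must cite in the precise form required that the embedding spaces $\EE^\infty(L_{2i},M)$ and the boundary-fixing groups $\DD(L_{2i-1},\bd_M L_{2i-1})$ are $l_2$-manifolds, which rests on their being separable Fr\'echet manifolds and on the Anderson--Kadec theorem, and that the identification $\DD(M,M\setminus\tint_M L_{2i-1})\approx\DD(L_{2i-1},\bd_M L_{2i-1})$ is a homeomorphism respecting the smooth structures.
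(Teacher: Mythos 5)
Your proposal is correct and follows essentially the same route as the paper: both verify that $\DD(M)$ has a strong topology with respect to $\F^\infty_c$, feed an exhausting sequence of compact smooth submanifolds into Proposition~\ref{p_loc-homeo} using the Palais--Cerf bundle theorem (the paper's Theorem~\ref{t_bdl}, whose part (1) supplies exactly the support-controlled local section into $\DD(M,M\setminus\tint_M N_{2i})$ that you flag as the main obstacle), and then identify the factors as $l_2$-manifolds via the Fr\'echet-manifold structure of the embedding spaces and boundary-fixing diffeomorphism groups. The only discrepancy is the citation: the smooth local triviality is due to Cerf--Palais (\cite{Cerf}, \cite{Palais1}), not \cite{Cer}, which concerns topological homeomorphism groups.
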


In the proof,
 we use Proposition~\ref{p_loc-homeo} and the following bundle theorem
 (cf.\ \cite{Cerf}, \cite{Hmlt}, \cite{Les}, \cite{Palais1}, \cite{Seeley}). 

\begin{theorem}\label{t_bdl} 
Suppose $K$ and $L$ are smooth $n$-submanifolds of $M$ 
such that they are closed subsets of $M$, $K \subset \tint_M L$ and
 $\cl_M(L \setminus K)$ is compact and nonempty.
\begin{enumerate}
\item 
For any closed subset $C$ of $M$ with $C \cap L = \emptyset$, 
 the restriction map $r : \DD(M, K) \to \EE_K^\infty(L, M)$
 has a local section
$$s : \U \to \DD(M, K \cup C) \subset \DD(M, K)$$
 at the inclusion $i_L : L \subset M$ such that $s(i_L) = \id_M$.
\item
The spaces $\DD(M, K \cup (M \setminus L))$ and $\EE^\infty_K(L, M)$
 are infinite-dimensional separable Fr\'echet manifolds
 $($thus topological $l_2$-manifolds$)$ and $\EE^{\infty, \star}_K(L, M)$
 is an open subset of $\EE^\infty_K(L, M)$.
\end{enumerate}
\end{theorem}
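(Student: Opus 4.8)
The plan is to treat part (1) as an isotopy-extension construction carried out over the compact ``core'' $\cl_M(L\setminus K)$, and to deduce part (2) from the standard Fr\'echet-manifold theory of embedding spaces together with the openness that part (1) forces on the set of extendable embeddings. Throughout I would fix a complete Riemannian metric on $M$ and work in the tubular-neighborhood chart of $L$ that it provides. The key structural observation is that, since $K\subset\tint_M L$ and every element of $\EE^\infty_K(L,M)$ restricts to $\id_K$, an embedding $f$ near $i_L$ differs from the inclusion only on the compact set $\cl_M(L\setminus K)$, so the entire argument localizes there.

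First I would construct the local section $s$. Fix a precompact open neighborhood $W$ of $\cl_M(L\setminus K)$ with $\cl_M W\cap C=\emptyset$; this is possible because $\cl_M(L\setminus K)$ is compact and $C$ is closed and disjoint from it. For $f$ in a sufficiently small compact-open $C^\infty$-neighborhood $\U$ of $i_L$, the image $f(L)$ lies in the tubular neighborhood of $L$, so $f$ is encoded by a normal section $v_f$ that vanishes on $K$ and depends smoothly on $f$. The maps $f_t(x)=\exp_x(t\,v_f(x))$ then give a canonical, compactly supported isotopy of $L$ into $M$ from $i_L$ to $f$, fixing $K$ and moving only within $W$. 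Applying the isotopy extension theorem with the generating field cut off by a fixed bump function supported in $W$ produces an ambient isotopy $\Phi^f_t$ with $\Phi^f_0=\id_M$, $\Phi^f_t|_K=\id$, $\Phi^f_t=\id$ on $M\setminus W$, and $\Phi^f_t|_L=f_t$. Setting $s(f)=\Phi^f_1$ yields the required map $s:\U\to\DD(M,K\cup C)\subset\DD(M,K)$ with $s(f)|_L=f$ and $s(i_L)=\id_M$.

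For part (2), I would use the exponential-map charts to identify $\EE^\infty_K(L,M)$ with an open subset of the Fr\'echet space of $C^\infty$ sections (rel $K$) of the pulled-back tangent bundle over $L$; since all nontrivial behaviour is concentrated on the compact core $\cl_M(L\setminus K)$, this model space is separable, infinite-dimensional and metrizable, hence homeomorphic to $l_2$, so $\EE^\infty_K(L,M)$ is an $l_2$-manifold. The same charts, now for compactly supported vector fields on $M$ that vanish on $K$ and outside $L$, exhibit $\DD(M,K\cup(M\setminus L))$ as a Fr\'echet, hence $l_2$-, manifold. Finally, the openness of $\EE^{\infty,\star}_K(L,M)$ in $\EE^\infty_K(L,M)$ follows from part (1) by translation: if $f_0=h_0|_L$ is extendable with $h_0\in\DD(M,K)$, then by continuity of the $\DD(M,K)$-action every $f$ near $f_0$ satisfies $h_0^{-1}\circ f\in\U$, so $h_0\circ s(h_0^{-1}\circ f)$ extends $f$; this simultaneously shows that $r$ admits local sections everywhere and is a principal $\DD(M,L)$-bundle.

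The main obstacle will be the analytic bookkeeping hidden in the phrase ``depends continuously on $f$''. Making the isotopy $\Phi^f$ genuinely continuous in the Whitney $C^\infty$-topology requires the smooth dependence of solutions of the flow ODE on the parameter $f$ together with uniform control over the compact core, and the Fr\'echet-manifold charts of part (2) rest on the local-addition/exponential machinery and on Seeley-type extension of sections across $\bd_M L$. These are precisely the points established in \cite{Cerf}, \cite{Hmlt}, \cite{Les}, \cite{Palais1}, \cite{Seeley}, to which I would appeal; the only genuinely new wrinkle beyond the compact case treated there is the localization to $\cl_M(L\setminus K)$ in the non-compact, $\sigma$-compact manifold $M$, which the support control inside $W$ handles.
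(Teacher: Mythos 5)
The paper does not actually prove Theorem \ref{t_bdl}: it is quoted as a known bundle theorem with the citation string (cf.\ \cite{Cerf}, \cite{Hmlt}, \cite{Les}, \cite{Palais1}, \cite{Seeley}), so there is no internal argument to compare yours against. What you have written is a faithful sketch of the standard proof that sits behind those references --- parametrized isotopy extension for part (1), exponential/local-addition charts plus Anderson--Kadec for part (2), and the translation trick $f\mapsto h_0\circ s(h_0^{-1}\circ f)$ for the openness of $\EE^{\infty,\star}_K(L,M)$ --- and your identification of the compact core $\cl_M(L\setminus K)$ as the only genuinely new ingredient beyond the compact case is exactly right; it is also what makes the model space a genuine Fr\'echet (rather than LF) space, since all sections are supported in a fixed compact set. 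Two points of wording would need repair in a full write-up. First, $L$ is an $n$-submanifold of the $n$-manifold $M$, so it has codimension zero and there is no normal bundle: the embedding $f$ near $i_L$ must be encoded as $f(x)=\exp_x(v_f(x))$ with $v_f$ a section of the full restricted tangent bundle $TM|_L$ vanishing on $K$ (you say this correctly in part (2) but not in part (1)), and one must shrink $\U$ to a $C^1$-small neighborhood to guarantee that the intermediate maps $f_t$ are still embeddings, which is not automatic for equidimensional maps. Second, the generating field of $f_t$ lives only on the manifold-with-boundary $f_t(L)$, so before cutting off by the bump supported in $W$ you need a Seeley/Whitney extension across $\bd_M f_t(L)$ depending continuously on $f$; you flag this, and it is precisely the role of \cite{Seeley} in the citation list. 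With those repairs the argument is the intended one.
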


\begin{proof}[\bf Proof of Theorem \ref{t_diff}]
We follow the formulation of Subsections 5.1--5.2 and
 apply Proposition~\ref{p_loc-homeo} to
 the transformation group 
 $G = \DD(M)$ on $M$. 
For $L \in \F^\infty_c$,  
the compact-open $C^\infty$-topology on $\EE^G(L, M)$ 
is an admissible topology, and 
Theorem~\ref{t_bdl}\,(1) and Remark~\ref{rem_adm-top}\,(iii) imply that
 this topology coincides with the quotient topology $\ahtau{}{L}{M}$
 on $\EE^G(L, M)$ induced by the restriction map $r : G \to \EE^G(L, M)$. 
Hence, it is seen that the transformation group $G$
 has a strong topology with respect to $\F^\infty_c$. 
 
Now we can apply Proposition~\ref{p_loc-homeo}
to any exhausting sequence $(M_i, L_i, N_i)_{i\in\IN}$ for $M$
 such that each $M_i$ is a compact $n$-submanifold of $M$. 
For each $i \in \IN$, it is seen that $L_{2i} \in \F^\infty_c$
 and $(N_{2i}, L_{2i})$ has LSP$_G$ by Theorem~\ref{t_bdl}\,(1). 
Hence, from Proposition~\ref{p_loc-homeo} it follows that 
$$(G, G_c) \approx_\ell \big(\square, \cbox)_{i\in\IN} \EE^G(L_{2i}, M)
 \times (\square, \cbox)_{i\in\IN} G(L_{2i-1}).$$ 
The latter pair is locally homeomorphic to $(\square^\w l_2,\cbox^\w l_2)$
 by Theorem~\ref{t_bdl}\,(2). 

Due to Theorem~\ref{t_bdl}\,(2), 
 $G(L)$ is separable metrizable for each compact smooth
 $n$-submanifold $L \subset M$. 
Then $G_c$ is paracompact by Proposition \ref{prop_tr-gr_compsupp}\,(2).
\end{proof}

\end{document}